\newtheorem{theorem}{Theorem}[section]
\newtheorem{lemma}[theorem]{Lemma}
\newtheorem{proposition}[theorem]{Proposition}
\newtheorem{corollary}[theorem]{Corollary}
\newtheorem{observation}[theorem]{Observation}
\newtheorem*{main:main_thm}{Main Theorem}
\newtheorem*{introcor}{Corollary}
\theoremstyle{definition}
\newtheorem{definition}[theorem]{Definition}
\newtheorem{remark}[theorem]{Remark}
\newtheorem{example}[theorem]{Example}
\newcommand{\Z}{\mathbb{Z}}
\newcommand{\N}{\mathbb{N}}
\newcommand{\R}{\mathbb{R}}
\newcommand{\tree}{\mathcal{T}}
\DeclareMathOperator{\id}{id}
\DeclareMathOperator{\Aut}{Aut}
\DeclareMathOperator{\AAut}{AAut}
\DeclareMathOperator{\Grig}{Grig}
\renewcommand{\deg}{\operatorname{deg}}
\renewcommand{\mod}{\mathrel{\operatorname{mod}}}
\newcommand{\F}{\mathrm{F}}
\newcommand{\FP}{\mathrm{FP}}
\newcommand{\FF}{\mathbb{F}}
\newcommand{\calC}{\mathcal{C}}
\newcommand{\calD}{\mathcal{D}}
\newcommand{\calE}{\mathcal{E}}
\newcommand{\calF}{\mathcal{F}}
\newcommand{\calN}{\mathcal{N}}
\newcommand{\calW}{\mathcal{W}}
\newcommand{\Ob}{\operatorname{Ob}}
\newcommand{\head}{\operatorname{head}}
\newcommand{\abs}[1]{\lvert #1 \rvert}
\newcommand{\ceil}[1]{\lceil #1 \rceil}
\newcommand{\floor}[1]{\lfloor #1 \rfloor}
\newcommand{\dfloor}[1]{\left\lfloor #1 \right\rfloor}
\newcommand{\defeq}{\mathbin{\vcentcolon =}}
\newcommand{\bfG}{\mathbf{G}}
\newcommand{\AGL}{\operatorname{AGL}}
\newcommand{\PU}{\operatorname{PU}}
\newcommand{\PGL}{\operatorname{PGL}}
\newcommand{\llb}{(\mkern-2mu(}
\newcommand{\rrb}{)\mkern-2mu)}
\newcommand{\lls}{[\mkern-2mu[}
\newcommand{\rrs}{]\mkern-2mu]}
\newcommand{\lseries}[1]{\llb #1 \rrb}
\newcommand{\pseries}[1]{\lls #1\rrs}
\newcommand{\calO}{\mathcal{O}}
\newcommand{\frakm}{\mathfrak{m}}
\begin{document}

\title{Simple groups separated by finiteness properties}
\date{\today}
\subjclass[2010]{Primary 20E32; 
                 Secondary 57M07, 
			20F65,   
			20E08} 

\keywords{Simple group, finiteness properties, quasi-isometry, tree automorphism, almost-automorphism, R\"over--Nekrashevych group, self-similar group, Thompson's group}

\author[R.~Skipper]{Rachel Skipper}
\address{Mathematics Institute, University of G{\"o}ttingen, Bunsenstrasse 3-5, 37073 G{\"o}ttingen, Germany}
\email{skipper.rachel.k@gmail.com}

\author[S.~Witzel]{Stefan Witzel}
\address{Department of Mathematics, Bielefeld University, PO Box 100131, 33501 Bielefeld, Germany}
\email{switzel@math.uni-bielefeld.de}

\author[M.~C.~B.~Zaremsky]{Matthew C.~B.~Zaremsky}
\address{Department of Mathematics and Statistics, University at Albany (SUNY), Albany, NY 12222}
\email{mzaremsky@albany.edu}

\thanks{The first author was partially supported by a grant from the Simons Foundation (\#245855 to Marcin Mazur). The second author is funded through the DFG project WI 4079-2/2.}

\begin{abstract}
We show that for every positive integer $n$ there exists a simple group that is of type $\F_{n-1}$ but not of type $\F_n$. For $n\ge 3$ these groups are the first known examples of this kind. They also provide infinitely many quasi-isometry classes of finitely presented simple groups. The only previously known infinite family of such classes, due to Caprace--Rémy, consists of non-affine Kac--Moody groups over finite fields. Our examples arise from R\"over--Nekrashevych groups, and contain free abelian groups of infinite rank.
\end{abstract}

\maketitle
\thispagestyle{empty}

\section*{Introduction}\label{sec:intro}

A group is of type $\F_n$ if it admits a classifying space with a compact $n$-skeleton. These topological finiteness properties generalize being finitely generated (type $\F_1$) and being finitely presented (type $\F_2$), and are quasi-isometry invariants. In this article we prove:

\begin{main:main_thm}
 For every positive integer $n$ there exists a simple group that is of type $\F_{n-1}$ but not of type $\F_n$.
\end{main:main_thm}

This result is new for every $n \ge 3$; see Theorem~\ref{thm:main_thm_precise} for a precise version. We say that two groups are \emph{separated} by finiteness properties if, for some $n$, one of them is of type $\F_n$ and the other is not. In this sense the Main Theorem provides infinitely many simple groups that are pairwise separated by finiteness properties. Our examples arise from certain groups of homeomorphisms of Cantor spaces, called R\"over--Nekrashevych groups. Roughly speaking, a R\"over--Nekrashevych group is built out of a Higman--Thompson group and a self-similar group, and we prove the Main Theorem by showing that, under certain conditions, the R\"over--Nekrashevych group inherits (virtual) simplicity from the Higman--Thompson group and finiteness properties from the self-similar group. The main novelty is that the groups can be constructed to not be of type $\F_n$.

One of our primary motivations is to distinguish infinite simple groups:

\begin{introcor}
There are infinitely many quasi-isometry classes of groups that are finitely presented, simple and contain free abelian subgroups of infinite rank.
\end{introcor}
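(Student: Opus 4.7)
The plan is to read off the Corollary directly from the Main Theorem, with only two supplementary observations about R\"over--Nekrashevych groups needed. First I would apply Theorem~\ref{thm:main_thm_precise} for every $n \ge 3$ to produce a simple group $G_n$ of type $\F_{n-1}$ but not of type $\F_n$. Because $n-1 \ge 2$, being of type $\F_{n-1}$ already entails being of type $\F_2$, so each $G_n$ is finitely presented at no extra cost. Since topological finiteness properties are quasi-isometry invariants (as recalled in the opening paragraph of the introduction), no two of the $G_n$ can be quasi-isometric: if $m < n$ then $G_n$ is of type $\F_m$ while $G_m$ is not. Hence $\{G_n\}_{n \ge 3}$ realizes infinitely many quasi-isometry classes of finitely presented simple groups.

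It remains to verify that each $G_n$ contains a free abelian subgroup of infinite rank. Here I would use that a R\"over--Nekrashevych group contains, by construction, a copy of some Higman--Thompson group $V_{d,r}$, together with the standard fact that $V_{d,r}$ itself contains $\bigoplus_{i \in \N} \Z$. The latter is easy: pick pairwise disjoint proper clopen subsets $U_1, U_2, \ldots$ of the Cantor space, on each $U_i$ fix a non-trivial element $g_i \in V_{d,r}$ of infinite order supported in $U_i$, and observe that elements supported on disjoint clopens commute. Any non-trivial word $g_{i_1}^{a_1} \cdots g_{i_k}^{a_k}$ then acts non-trivially on $U_{i_1}$ (since $g_{i_1}^{a_1}$ is non-trivial there and the other $g_{i_j}$ fix $U_{i_1}$ pointwise), so the $g_i$ generate a copy of $\bigoplus_{i \in \N} \Z$.

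The only step deserving real care is confirming that the particular R\"over--Nekrashevych groups built for the Main Theorem really do contain such a copy of a Higman--Thompson group in the standard way. Since this inclusion is part of the very definition of a R\"over--Nekrashevych group as used in the body of the paper, I do not anticipate a genuine obstacle here, merely a bookkeeping check against the precise setup of Theorem~\ref{thm:main_thm_precise}.
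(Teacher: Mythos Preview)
Your approach is essentially the paper's: derive the Corollary from Theorem~\ref{thm:main_thm_precise} together with Lemma~\ref{lem:infinite_rank_abelian_subgroup} (that $V_d$ contains $\bigoplus_{\N}\Z$) and Corollary~\ref{cor:alonso} (quasi-isometry invariance of finiteness properties). The one point you flag as needing care does need slightly more than bookkeeping, though: the simple groups produced by Theorem~\ref{thm:main_thm_precise} are the \emph{commutator subgroups} $V_d(G)'$, not the full R\"over--Nekrashevych groups, so the inclusion $V_d \le V_d(G)$ is not by itself enough. However, since $V_d(G)'$ has finite index in $V_d(G)$, it meets any copy of $\bigoplus_{\N}\Z \le V_d$ in a finite-index subgroup, which is again free abelian of infinite rank (it contains $m\cdot\bigoplus_{\N}\Z$ for $m = [V_d(G):V_d(G)']$), and the argument goes through.
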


While finite simple groups have been classified, in one of the largest collective efforts in pure mathematics, comparatively little is known about the possible variety among infinite simple groups. To make the problem approachable, it is natural to restrict to the countable family of finitely presented groups and, following Gromov's insight, to study them up to quasi-isometry (note that all finite groups form a single quasi-isometry class). So far there was only one known infinite family of quasi-isometry classes of finitely presented simple groups: Caprace and Rémy showed that non-affine Kac--Moody groups provide such a family \cite{caprace09,caprace10}. Rémy asked whether infinitely many quasi-isometry classes of finitely presented simple groups could also be found in the realm of generalized Thompson groups, and we answer this question affirmatively. Since every Kac--Moody group has finite asymptotic dimension, none of our examples is quasi-isometric to any of them.

It is unclear whether the Main Theorem, at least restricted to even $n$, could also be proved using Kac--Moody groups. An irreducible Kac--Moody group is virtually simple if it is not of affine type \cite{caprace09}. However, the precise finiteness properties of infinite Kac--Moody groups are known only in the affine case \cite{witzel14phd}. A $2$-spherical Kac--Moody group over a large enough finite field is finitely presented \cite{abramenko97}. On the other hand unless it is finite it is not of type $\F_\infty$, being a non-uniform lattice \cite{gandini12,kropholler93,kropholler98}.

Infinite simple groups are easy to obtain if one does not require finite generation. The existence of a finitely generated infinite simple group was shown by Higman \cite{higman51}. Uncountably many such groups were constructed shortly after by Camm \cite{camm53}. The first known finitely presented infinite simple groups were Thompson's groups $T$ and $V$ (described in unpublished notes). They were extended to an infinite family of examples by Higman \cite{higman74}. Brown \cite{brown87} later showed that all these finitely presented examples are of type $\F_\infty$, following Brown and Geoghegan's proof \cite{brown84} that Thompson's group $F$ (which is not simple) is of type $\F_\infty$. Since then other simple variations of Thompson's groups have been constructed, notably the Brin--Thompson groups $nV$ \cite{brin04,brin10}. However, in all cases established so far, these groups are of type $\F_\infty$ \cite{fluch13,belk16,skipper17} or not finitely presented \cite{witzel16}.

Another class of finitely presented simple groups consists of the examples by Burger and Mozes \cite{burger00}. While these are interesting for various reasons, they only form a single quasi-isometry class, because all of them are uniform lattices on a product of two trees.

There has also been a great deal of interest in studying groups separated by finiteness properties. The first group of type $\F_2$ but of type not $\F_3$ was constructed by Stallings \cite{stallings63}. It was generalized by Bieri \cite{bieri76} to an infinite sequence of groups separated by finiteness properties, and then to a very general construction by Bestvina and Brady \cite{bestvina97}. This program was eventually finished by Meier, Meinert, and VanWyk \cite{meier98}. All the groups in this vein are subgroups of right-angled Artin groups and hence are residually finite. Another large source of groups separated by finiteness properties is $S$-arithmetic groups \cite{bux04,abels87,bux13} and variations thereof \cite{bux10}, which are also residually finite. Examples of non-residually finite groups separated by finiteness properties include Houghton's groups \cite[Section~5]{brown87}, subgroups of Thompson's group $F$ constructed by Bieri--Geoghegan--Kochloukova \cite{bieri10}, and certain generalized Thompson groups considered in \cite{witzel14}. In all these cases the abelianization is infinite.

The paper is organized as follows. We recall some background on finiteness properties and quasi-isometries in Section~\ref{sec:fin_props_QI}, and on self-similar groups and R\"over--Nekrashevych groups in Section~\ref{sec:gps}. The following sections are largely independent and contain the proofs that certain R\"over--Nekrashevych groups are virtually simple (Section~\ref{sec:simple}), of type $\F_{n-1}$ (Section~\ref{sec:positive}), and not of type $\F_n$ (Section~\ref{sec:negative}), assuming the existence of self-similar groups with appropriate properties. Such self-similar actions of groups separated by finiteness properties are constructed in Section~\ref{sec:self_similar_fin_props}. Finally, in Section~\ref{sec:proof} we prove the Main Theorem, provide some examples, and discuss the (non-)relationship with Kac--Moody groups.

\subsection*{Acknowledgments} We are grateful to Dessislava Kochloukova and Said Sidki for sharing a preprint of \cite{kochloukova17} with us and to Bertrand Rémy for asking the question that motivated our search for simple groups separated by finiteness properties. We also thank Matt Brin, Pierre-Emmanuel Caprace, Eduard Schesler, and Marco Varisco for many helpful comments.

\section{Finiteness properties and quasi-isometries}\label{sec:fin_props_QI}

A \emph{classifying space} for a group $G$ is a connected CW-complex $X$ with fundamental group $G$ and contractible universal cover. A group is of \emph{type $F_n$} if it admits a classifying space with compact $n$-skeleton. Being of type $\F_1$ is equivalent to being finitely generated, and being of type $\F_2$ is equivalent to being finitely presented. A group is of \emph{type $F_\infty$} if it is of type $\F_n$ for all $n$. These topological finiteness properties $\F_n$ also have slightly weaker homological analogues $\FP_n$. Rather than defining these we summarize the relationship: type $\F_n$ implies type $\FP_n$; type $\FP_1$ is equivalent to type $\F_1$; type $\F_2$ and type $\FP_n$ imply type $\F_n$; type $\FP_2$ does not imply $\F_2$, by a famous result due to Bestvina and Brady \cite{bestvina97}, recently expanded by Leary \cite{leary15,leary16}.

Topological and homological finiteness properties are quasi-isometry invariants. This was shown by Alonso \cite{alonso94} applying Brown's criterion \cite{brown87} to Rips complexes. We will need a more precise result from \cite{alonso94} which we describe now.

\begin{definition}[$(C,D)$-Lipschitz, quasi-isometric]
A function $f\colon X\to Y$ between metric spaces $(X,d_X)$ and $(Y,d_Y)$ is \emph{$(C,D)$-Lipschitz} (for $C\ge 1$ and $D\ge 0$) if
\[
d_Y(f(x),f(x'))\le Cd_X(x,x')+D
\]
for all $x,x'\in X$. The metric spaces $X$ and $Y$ are \emph{quasi-isometric} if there exist functions $f\colon X\to Y$ and $f'\colon Y \to X$ and constants $C$ and $D$ such that $f$ and $f'$ are $(C,D)$-Lipschitz and for all $x\in X$ and $y\in Y$ we have
\[
d_X(f'f(x),x)\leq D \quad \text{and} \quad d_Y(ff'(y),y) \leq D\text{.}
\]
\end{definition}

\begin{definition}[Quasi-retract]
Let $H$ and $Q$ be finitely generated groups, equipped with word metrics $d_H$ and $d_Q$ respectively. If there exist $(C,D)$-Lipschitz functions $r\colon H\to Q$ and $\iota\colon Q\to H$ such that $d_Q(r\circ\iota(x),x)\le D$ for all $x\in Q$ then we call $Q$ a \emph{quasi-retract} of $H$, and $r$ a \emph{quasi-retraction}.
\end{definition}

Recall that $Q$ is called a \emph{retract} of $H$ if $\iota$ and $r$ are homomorphisms and $r\circ\iota=\id_Q$, so quasi-retracts are a natural geometric generalization of retracts. Finiteness properties are preserved under passing to retracts (see Remark~\ref{rmk:retract}), and Alonso proved that they are even preserved under passing to quasi-retracts:

\begin{theorem}[{\cite[Theorem~8]{alonso94}\label{thm:alonso}}]
 Let $H$ and $Q$ be finitely generated groups such that $Q$ is a quasi-retract of $H$. If $H$ is of type $\F_n$ (or $\FP_n$) then so is $Q$.
\end{theorem}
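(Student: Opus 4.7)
The plan is to apply Brown's filtration criterion for $\F_n$ to the Rips complex filtration of $Q$, using the quasi-retraction to kill homotopy via the Rips complex of $H$.

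For a finitely generated group $G$ and $d\ge 0$, let $P_d(G)$ denote the Rips complex, that is, the flag complex on the graph whose edges join elements within $d_G$-distance $d$. Each $P_d(G)$ is a free and $G$-cocompact $G$-CW-complex, and the nested union $\bigcup_d P_d(G)$ is the full simplex on $G$, hence contractible. A classical theorem ensures that if $H$ is of type $\F_n$ then $P_d(H)$ is $(n-1)$-connected for all sufficiently large $d$; in the $\FP_n$ setting it is $(n-1)$-acyclic.

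The key observation is that any $(C,D)$-Lipschitz map between groups extends canonically to a simplicial map between Rips complexes with suitably adjusted parameters, because $P_d(G)$ is a flag complex on its $1$-skeleton. Thus $\iota$ extends to a simplicial map $P_e(Q)\to P_d(H)$ whenever $d\ge Ce+D$, and $r$ to a simplicial map $P_d(H)\to P_{e'}(Q)$ whenever $e'\ge Cd+D$. The composite $r\circ\iota\colon P_e(Q)\to P_{e'}(Q)$ moves each vertex by at most $D$, so for $e'\ge e+D$ this composite is simplicially contiguous to the natural inclusion $P_e(Q)\hookrightarrow P_{e'}(Q)$, and therefore homotopic to it.

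Given $e$, one chooses $d=Ce+D$ and then $e'\ge\max(Cd+D,\,e+D)$ large enough that $P_d(H)$ is $(n-1)$-connected. Any class in $\pi_k(P_e(Q))$ with $k<n$ then vanishes in $\pi_k(P_{e'}(Q))$, because under its homotopic representative $r\circ\iota$ it factors through the $(n-1)$-connected $P_d(H)$. This is exactly the essential triviality of the direct system $\{\pi_k(P_e(Q))\}_e$ in degrees $k\le n-1$ required by Brown's criterion \cite{brown87}, and it yields that $Q$ is of type $\F_n$; the $\FP_n$ version is formally identical, with reduced integral homology in place of homotopy. The main obstacle is the careful bookkeeping of the Lipschitz constants through the three Rips parameters $e,d,e'$, together with confirming that simplicial contiguity really does produce a genuine homotopy at the level needed for Brown's criterion.
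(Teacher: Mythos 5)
The paper does not prove this statement; it cites it to Alonso \cite{alonso94}, and your proposal is in essence a reconstruction of Alonso's own argument (Rips complexes plus Brown's criterion, with the Lipschitz hypotheses propagated through the Rips parameters). So the approach is the right one and the skeleton is sound. Two points deserve care, though neither is fatal.

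First, the ``classical theorem'' you invoke is stated a bit too strongly. What is reliably available is that, for $H$ of type $\F_n$, the directed system $\bigl\{\pi_k(P_d(H))\bigr\}_d$ is \emph{essentially trivial} for $k<n$ (for each $d$ there is $d'\ge d$ so that $\pi_k(P_d(H))\to\pi_k(P_{d'}(H))$ vanishes), which is exactly what Brown's criterion yields from a free cocompact action on the filtration of the contractible full simplex on $H$. Asserting outright that $P_d(H)$ is $(n-1)$-connected for all large $d$ is a stronger claim whose proof is not immediate for $n\ge 3$. Fortunately your argument only needs the essential-triviality form: given $e$, choose $d\ge Ce+D$ so $\iota$ gives a simplicial map $P_e(Q)\to P_d(H)$, then choose $d'\ge d$ killing $\pi_k(P_d(H))\to\pi_k(P_{d'}(H))$, then choose $e'$ large enough that $r$ gives $P_{d'}(H)\to P_{e'}(Q)$ and the contiguity with the inclusion holds; the composite $\pi_k(P_e(Q))\to\pi_k(P_{e'}(Q))$ then factors through a zero map, which is the essential triviality of $\{\pi_k(P_e(Q))\}$ that Brown's criterion requires. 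You should phrase it this way rather than asserting eventual $(n-1)$-connectivity.

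Second, a small arithmetic slip: for $r\circ\iota$ and the inclusion to be contiguous on $P_e(Q)$, you need every $\{q_0,\dots,q_m,r\iota(q_0),\dots,r\iota(q_m)\}$ to span a simplex; since $d(q_i,q_j)\le e$ and $d(q_i,r\iota(q_i))\le D$, the diameter of this set is bounded by $e+2D$ (the $r\iota(q_i)$--$r\iota(q_j)$ distances pick up $D$ twice), so the correct requirement is $e'\ge e+2D$, not $e'\ge e+D$. With these two adjustments the proof is correct and matches the standard (Alonso's) argument.
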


\begin{corollary}[{\cite[Corollary~9]{alonso94}\label{cor:alonso}}]
Let $H$ and $Q$ be finitely generated, quasi-isometric groups (under the word metric). Then $H$ is of type $\F_n$ (or $\FP_n$) if and only if $Q$ is of type $\F_n$ (or $\FP_n$). In particular, finiteness properties are invariant under quasi-isometry.
\end{corollary}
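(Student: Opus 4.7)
The plan is to deduce the corollary immediately from Theorem~\ref{thm:alonso} by observing that any two quasi-isometric finitely generated groups are mutual quasi-retracts. Concretely, suppose $f\colon H\to Q$ and $f'\colon Q\to H$ are $(C,D)$-Lipschitz maps witnessing a quasi-isometry, so that $d_H(f'f(x),x)\le D$ for all $x\in H$ and $d_Q(ff'(y),y)\le D$ for all $y\in Q$.

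Taking $r\defeq f$ and $\iota\defeq f'$ verifies the definition of $Q$ being a quasi-retract of $H$: both maps are $(C,D)$-Lipschitz by hypothesis, and the required bound $d_Q(r\circ\iota(y),y)\le D$ is precisely the condition $d_Q(ff'(y),y)\le D$. Symmetrically, the choice $r\defeq f'$ and $\iota\defeq f$ exhibits $H$ as a quasi-retract of $Q$, using the other quasi-isometry bound $d_H(f'f(x),x)\le D$. Applying Theorem~\ref{thm:alonso} in each direction yields the biconditional for both type $\F_n$ and type $\FP_n$, and the final sentence is then just a restatement.

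I do not expect any real obstacle here beyond unpacking definitions. The only implicit point worth noting is that the conclusion is independent of the choice of finite generating sets used to define the word metrics on $H$ and $Q$; this is standard, since any two word metrics on a finitely generated group are bi-Lipschitz equivalent, so a quasi-isometry with respect to one choice of generating sets remains a quasi-isometry, with possibly different constants $C$ and $D$, with respect to any other.
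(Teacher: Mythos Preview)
Your argument is correct: quasi-isometric groups are mutual quasi-retracts by definition, so Theorem~\ref{thm:alonso} applied in both directions gives the biconditional. The paper does not actually supply a proof of this corollary; it simply cites it from \cite{alonso94}, so there is nothing to compare your approach against beyond noting that your deduction from Theorem~\ref{thm:alonso} is exactly the intended one-line derivation.
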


\begin{remark}
The main difficulty in proving the Main Theorem is to show that the groups are not of type $\F_n$. One possibility for proving that a group $H$ is not of type $\F_n$ is to show that it has a retract $Q$ that is not of type $\F_n$. For a simple group this is clearly not an option. Theorem~\ref{thm:alonso} says that it suffices for $Q$ to be a quasi-retract, which, as we will see in Section~\ref{sec:negative}, can happen even for simple groups.
\end{remark}

\section{Self-similar groups and R\"over--Nekrashevych groups}\label{sec:gps}

Let $X$ be a set with $d\ge 2$ elements, called an \emph{alphabet}, and let $X^*$ be the set of finite words in $X$, including the empty word $\varnothing$. Then $X^*$ can be identified with the vertex set of the infinite rooted $d$-ary tree $\tree_d$. Two words $u,v\in X^*$ share an edge in $\tree_d$ precisely when $u=vx$ or $v=ux$ for some $x\in X$. The root corresponds to $\varnothing$, the only vertex of degree $d$. We fix a total order on $X$ and order $X^*$ lexicographically.

In this section we recall our groups of interest, namely self-similar groups and R\"over--Nekrashevych groups. More background can be found for example in \cite{nekrashevych05}.

\subsection{Self-similar groups}\label{sec:self_sim}

An \emph{automorphism} of $\tree_d$ will always mean a bijection from $\tree_d$ to $\tree_d$ taking vertices to vertices, edges to edges, and preserving incidence. We denote by $\Aut(\tree_d)$ the group of automorphisms of $\tree_d$. Note that every automorphism fixes the root $\varnothing$.

If we identify $X$ with $\{1,\dots,d\}$ then, since automorphisms of $\tree_d$ preserve length of words in $X^*$, any automorphism of $\tree_d$ induces a permutation of $\{1,\dots,d\}$, i.e., an element of the symmetric group $S_d$. Hence we get a map $\rho\colon \Aut(\tree_d) \to S_d$. Moreover, since for any $x\in X$ the subgraph of $\tree_d$ spanned by the vertex set $\{xw\mid w\in X^*\}$ is canonically isomorphic to $\tree_d$ via $xw\leftrightarrow w$, the kernel of $\rho$ is a direct product of $d$ copies of $\Aut(\tree_d)$. The action of $S_d$ naturally permutes these copies, and we conclude that $\Aut(\tree_d) \cong S_d \wr \Aut(\tree_d)$.

\begin{definition}[Wreath recursion]
 Let $\psi \colon \Aut(\tree_d) \to S_d \wr \Aut(\tree_d)$ be the above isomorphism. For any $f\in \Aut(\tree_d)$ if $\psi(f)=\rho(f)(f_1,\dots,f_d)$ then we call $\rho(f)(f_1,\dots,f_d)$ the \emph{wreath recursion} of $f$. As is standard, we will often abuse notation and just write $f=\rho(f)(f_1,\dots,f_d)$.
\end{definition}

Given $f\in \Aut(\tree_d)$ we will be interested in the states of $f$, meaning the elements that can appear when applying iterated wreath recursions to $f$. More rigorously, we have the following definition.

\begin{definition}[States]
 For $f\in\Aut(\tree_d)$, the \emph{level-$1$ states} of $f$ are the elements $f_1,\dots,f_d$ appearing in the wreath recursion $f=\rho(f)(f_1,\dots,f_d)$. The \emph{states} of $f$ are the elements of the smallest subset of $\Aut(\tree_d)$ that contains $f$ and is closed under taking level-$1$ states.
\end{definition}

Iterating the wreath recursion $n$ times produces a permutation of the vertices on the $n$th level and a collection of states on the $n$th level.

\begin{definition}[Self-similar]
 Let $S\subseteq \Aut(\tree_d)$. We call $S$ \emph{self-similar} if for all $s\in S$, every state of $s$ lies in $S$. We will also refer to a group $G$ as having a \emph{faithful self-similar action} on $\tree_d$ if $G$ acts faithfully on $\tree_d$ and its resulting image in $\Aut(\tree_d)$ is self-similar.
\end{definition}

Self-similar groups are sometimes called \emph{state-closed} groups in the literature.

\begin{definition}[Finite-state]
An element $s \in \Aut(\tree_d)$ is \emph{finite-state} if its set of states is finite. A set $S \subseteq \Aut(\tree_d)$ is finite-state if each of its elements is. (In the literature this is sometimes only defined when $S$ is self-similar.) We will also refer to a group $G$ as having a \emph{faithful finite-state action} on $\tree_d$ if $G$ acts faithfully on $\tree_d$ and its resulting image in $\Aut(\tree_d)$ is finite-state.
\end{definition}

Finite-state self-similar groups are often called \emph{automata groups} in the literature.

Note that the wreath recursion of a product, say $fg$ for $f,g\in \Aut(\tree_d)$, is
\[
fg=\rho(f)\rho(g)(f_{\rho(g)(1)}g_1,\dots,f_{\rho(g)(d)}g_d)
\]
where $f=\rho(f)(f_1,\dots,f_d)$ and $g=\rho(g)(g_1,\dots,g_d)$ are the wreath recursions of $f$ and $g$. Iterating this, we see:

\begin{lemma}
\label{lem:product_of_finite_state}
If $f,g \in \Aut(\tree_d)$ are finite-state then so is $fg$. In particular, $G \le \Aut(\tree_d)$ is finite-state if it admits a finite-state generating set.
\end{lemma}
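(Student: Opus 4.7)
The plan is to use the given wreath recursion formula for a product
\[
fg = \rho(f)\rho(g)\bigl(f_{\rho(g)(1)}g_1,\ldots,f_{\rho(g)(d)}g_d\bigr)
\]
directly. Let $S(f)$ and $S(g)$ denote the sets of states of $f$ and $g$, both assumed finite. I would first prove the stronger claim that every state of $fg$ has the form $f'g'$ with $f' \in S(f)$ and $g' \in S(g)$; this immediately gives $|S(fg)| \le |S(f)|\cdot|S(g)| < \infty$ and hence that $fg$ is finite-state.

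To establish the claim, consider the set $T = \{f'g' : f' \in S(f),\; g' \in S(g)\}$. It contains $fg$, so it suffices to show $T$ is closed under taking level-1 states; by the minimality clause in the definition of states this forces $S(fg) \subseteq T$. Pick any $f'g' \in T$. Applying the wreath recursion formula above (with $f'$ and $g'$ in place of $f$ and $g$) shows that the level-1 states of $f'g'$ are precisely the products $f'_{\rho(g')(i)}\, g'_i$ for $i=1,\ldots,d$. Here $f'_{\rho(g')(i)}$ is a level-1 state of $f' \in S(f)$, and $S(f)$ is closed under taking level-1 states by definition, so $f'_{\rho(g')(i)} \in S(f)$; symmetrically $g'_i \in S(g)$. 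Hence each level-1 state of $f'g'$ lies in $T$, as required.

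For the ``in particular'' statement I also need that the inverse of a finite-state element is finite-state. This is essentially the same argument: the analogous wreath recursion $f^{-1} = \rho(f)^{-1}\bigl(f_{\rho(f)^{-1}(1)}^{-1},\ldots,f_{\rho(f)^{-1}(d)}^{-1}\bigr)$ shows that the set $\{(f')^{-1} : f' \in S(f)\}$ is closed under taking level-1 states and contains $f^{-1}$, so $|S(f^{-1})| \le |S(f)|$. Combining this with the product claim, the collection of finite-state elements of $\Aut(\tree_d)$ is closed under products and inverses. Thus a group $G \le \Aut(\tree_d)$ generated by a finite-state set consists entirely of finite-state elements, by induction on word length in that generating set.

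There is no real obstacle here: the proof is a direct unwinding of the definition of ``states'' using the product formula already established in the preceding paragraph. The only point that warrants care is verifying that the candidate set $T$ is genuinely closed under the level-1 state operation, and this is precisely what the closure of $S(f)$ and $S(g)$ under that same operation gives us.
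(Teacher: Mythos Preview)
Your argument is correct and is exactly the approach the paper has in mind: the sentence ``Iterating this, we see:'' preceding the lemma refers precisely to the product wreath recursion formula, and your observation that $T = \{f'g' : f' \in S(f),\ g' \in S(g)\}$ is closed under level-$1$ states is the intended content. Your additional paragraph handling inverses is a genuine improvement --- the paper's ``in particular'' tacitly requires this (or else a symmetric generating set), and you have made the point explicit.
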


\subsection{R\"over--Nekrashevych groups}\label{sec:nekr}

Let $X^\omega$ be the set of infinite words in $X$, so $X^\omega$ can be identified with the boundary $\partial\tree_d$, which is a $d$-ary Cantor set. For any $u\in X^*$ define the \emph{cone} of $u$ to be $C(u)\defeq\{uw\mid w\in X^\omega\} \subseteq X^\omega$. Call a finite subtree $T$ of $\tree_d$ \emph{rooted} if it contains the root of $\tree_d$. Call $T$ \emph{complete} if whenever $ux\in T$ for $u\in X^*$ and $x\in X$, in fact $uy\in T$ for all $y\in X$. A vertex $u$ of $T$ is a \emph{leaf} of $T$ if no $ux$ for $x\in X$ lies in $T$.

\begin{definition}[Almost-automorphism]
 An \emph{almost-automorphism} of $\tree_d$ is a homeomorphism of $\partial\tree_d \to \partial\tree_d$ that can be obtained as follows. Take two finite complete rooted subtrees $T_-$ and $T_+$ of $\tree_d$ with the same number of leaves, say $n$. Let $u_1,\dots,u_n$ be the leaves of $T_-$ in order and let $v_1,\dots,v_n$ be the leaves of $T_+$ in order. Also let $f_1,\dots,f_n\in\Aut(\tree_d)$ and $\sigma\in S_n$. Note that the cones $C(u_1),\dots,C(u_n)$ partition $\partial\tree_d$, as do the cones $C(v_1),\dots,C(v_n)$. We collect all these data into a triple $(T_-,\sigma(f_1,\dots,f_n),T_+)$. Now define the almost-automorphism $[T_-,\sigma(f_1,\dots,f_n),T_+]$ to be the self-homeomorphism of $\partial\tree_d$ sending $v_i w$ to $u_{\sigma(i)} f_i(w)$ for all $1\le i\le n$. We denote by $\AAut(\tree_d)$ the group of all almost-automorphisms of $\tree_d$.
\end{definition}

\begin{definition}[Higman--Thompson group]
 The \emph{Higman--Thompson group} $V_d$ is the subgroup of $\AAut(\tree_d)$ consisting of homeomorphisms $[T_-,\sigma,T_+]$.
\end{definition}

The following is well known, and is the reason that our Main Theorem implies the Corollary in the introduction:

\begin{lemma}\cite[Theorem~4.8]{cannon96}\label{lem:infinite_rank_abelian_subgroup}
The group $V_d$ contains free abelian subgroups of infinite rank.
\end{lemma}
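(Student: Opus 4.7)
The plan is to exhibit a concrete free abelian subgroup of infinite rank inside $V_d$ by using elements with pairwise disjoint supports. Two homeomorphisms of $\partial\tree_d$ whose non-fixed sets are disjoint automatically commute, and if each of them has infinite order the group they generate is a direct sum of copies of $\Z$.

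First I would construct infinitely many pairwise disjoint cones in $\partial\tree_d$. Since $d \ge 2$, fix two distinct letters $x,y \in X$ and set $u_i \defeq x^{i-1} y$ for $i \ge 1$. The vertices $u_i \in X^*$ are pairwise incomparable in $\tree_d$, so the cones $C(u_i)$ are pairwise disjoint subsets of $\partial\tree_d$. Next I would realise, for each $i$, a copy of $V_d$ whose elements are supported in $C(u_i)$: given any $h = [S_-, \tau, S_+] \in V_d$, the canonical identification $C(u_i) \cong \partial\tree_d$ via $u_i w \leftrightarrow w$ lets me define $\widehat{h}_i \in V_d$ acting as $h$ on $C(u_i)$ and as the identity on $\partial\tree_d \setminus C(u_i)$. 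Concretely, one starts from the smallest complete rooted subtree containing $u_i$, refines the leaf $u_i$ according to $S_\pm$, and uses the permutation that equals $\tau$ on the refined piece and the identity on the remaining leaves.

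Now I would pick an element $h \in V_d$ of infinite order, which is easy to produce; for instance the standard positive generator of Thompson's group $F \le V \le V_d$ has infinite order, or one may write down by hand a tree pair with a ``shift-like'' structure. Setting $g_i \defeq \widehat{h}_i$, the elements $g_1, g_2, \ldots$ have pairwise disjoint supports and hence commute pairwise.

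Finally I would verify linear independence. For any finite expression $\prod_i g_i^{n_i}$, its restriction to the cone $C(u_i)$ coincides (after the identification above) with $h^{n_i}$, because every $g_j$ with $j \ne i$ acts as the identity on $C(u_i)$. Consequently $\prod_i g_i^{n_i} = 1$ forces $h^{n_i} = 1$ for every $i$, and infinite order of $h$ then gives $n_i = 0$ throughout. Thus $\langle g_1, g_2, \ldots \rangle \cong \bigoplus_{i \ge 1} \Z$ is a free abelian subgroup of infinite rank. The only step requiring any care is the tree-pair bookkeeping used to define $\widehat{h}_i$, but this is routine and presents no real obstacle.
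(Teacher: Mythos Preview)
Your argument is correct: the cones $C(x^{i-1}y)$ are pairwise disjoint, elements supported on disjoint cones commute, and restricting a product $\prod g_i^{n_i}$ to $C(u_i)$ recovers $h^{n_i}$, giving independence. The one point to tidy is the throwaway remark ``$F \le V \le V_d$'': for $d \ne 2$ this inclusion is not part of the paper's setup, so it is cleaner to exhibit an infinite-order element of $V_d$ directly (any element realising a proper inclusion $C(v) \hookrightarrow C(v)$ on some cone works), exactly as you indicate in your fallback.

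As for comparison: the paper does not give its own proof of this lemma. It is stated with a citation to \cite[Theorem~4.8]{cannon96} and no proof environment follows. The argument in \cite{cannon96} is for $d=2$ and proceeds by showing $F \times F \hookrightarrow F$ (hence $\Z^\infty \hookrightarrow F \le V$), which is morally the same disjoint-support idea you use, just phrased via an embedding of $F$ rather than via individual generators. Your version has the minor advantage of working uniformly for all $d \ge 2$ without invoking $F$.
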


\begin{definition}[R\"over--Nekrashevych group]
 Let $G\le \Aut(\tree_d)$ be self-similar. The \emph{R\"over--Nekrashevych group} $V_d(G)$ is the subgroup of $\AAut(\tree_d)$ consisting of homeomorphisms $[T_-,\sigma(g_1,\dots,g_n),T_+]$ for $g_1,\dots,g_n\in G$.
\end{definition}

Note that $V_d(G)$ really is a subgroup of $\AAut(\tree_d)$, since self-similarity ensures that it is closed under multiplication. R\"over--Nekrashevych groups were introduced in this degree of generality in \cite{nekrashevych04}. The first such group considered was the \emph{R\"over group} \cite{roever99}, which is $V_2(\Grig)$ for $\Grig$ the Grigorchuk group. The R\"over group is simple and of type $\F_\infty$ \cite{belk16}.

A given almost-automorphism is represented by various triples, and the rest of this section will be devoted to understanding this non-uniqueness. For example $[T,(\id,\dots,\id),T]$ is the identity for any $T$. More generally, two triples
\[
(T_-,\sigma(f_1,\dots,f_n),T_+)\quad\text{and}\quad (U_-,\tau(g_1,\dots,g_n),U_+)
\]
describe the same almost-automorphism
\[
[T_-,\sigma(f_1,\dots,f_n),T_+]=[U_-,\tau(g_1,\dots,g_n),U_+]
\]
if both can be expanded in the following sense to produce the same triple.\\
Let $(T_-,\sigma(f_1,\dots,f_n),T_+)$ be a triple representing an element of $\AAut(\tree_d)$. Say the leaves of $T_-$ are $u_1,\dots,u_n$ and the leaves of $T_+$ are $v_1,\dots,v_n$ (in order). Now let $T_+'$ be a finite complete rooted subtree of $\tree_d$ containing $T_+$, say with leaves $v_1',\dots,v_{n'}'$, so the partition of $\partial\tree_d$ by the cones $C(v_1'),\dots,C(v_{n'}')$ is a refinement of the partition by the cones $C(v_1),\dots,C(v_n)$. Looking at the image of these cones under $[T_-,\sigma(f_1,\dots,f_n),T_+]$ we get a partition refining $C(u_1),\dots,C(u_n)$, of the form $C(u_1'),\dots,C(u_{n'}')$ for $u_i'$ the leaves of some tree $T_-'$. Now by iterated applications of wreath recursions, we can view $\sigma(f_1,\dots,f_n)$ as some $\sigma'(f_1',\dots,f_{n'}')$, acting on $C(v_i')$ by $f_i'$ and then taking it to $C(u_{\sigma'(i)}')$.

\begin{definition}[Expansion]
With the above setup, we call $(T_-',\sigma'(f_1',\dots,f_{n'}'),T_+')$ an \emph{expansion} of $(T_-,\sigma(f_1,\dots,f_n),T_+)$.
\end{definition}

The following confluence result provides a converse to what was said above (cf.\ \cite[Lemma~2.3]{leboudec17}).

\begin{lemma}\label{lem:iff_exp}
Two triples represent the same almost-automorphism if and only if they have a common expansion.
\end{lemma}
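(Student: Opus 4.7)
The ``if'' direction is the easy half: it suffices to check that a single expansion of a triple represents the same almost-automorphism, since iterated expansions compose. Consider expanding across just one cherry, i.e.\ replacing a leaf $v_i$ of $T_+$ by its $d$ children. Writing the wreath recursion $f_i = \rho(f_i)(f_{i,1},\dots,f_{i,d})$, one computes
\[
v_i x w \;\longmapsto\; u_{\sigma(i)} f_i(xw) \;=\; u_{\sigma(i)} \rho(f_i)(x) f_{i,x}(w),
\]
so adding the corresponding cherry at $u_{\sigma(i)}$ in $T_-$ and recording the new permutation and states yields a triple defining the same homeomorphism of $\partial\tree_d$. A general expansion is a finite sequence of such cherry additions, and one can also invoke an expansion on the $T_-$ side by symmetry (inverting the triple). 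This proves both triples with a common expansion represent the same almost-automorphism.

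For the ``only if'' direction, suppose $(T_-,\sigma(f_1,\dots,f_n),T_+)$ and $(U_-,\tau(g_1,\dots,g_n),U_+)$ define the same homeomorphism $\varphi\in\AAut(\tree_d)$. Let $W_+$ be the smallest finite complete rooted subtree containing both $T_+$ and $U_+$ (the ``union'' tree, obtained by taking all prefixes of the union of the leaf sets). Since $W_+ \supseteq T_+$ and $W_+\supseteq U_+$, the procedure described before the lemma produces expansions
\[
(T_-',\sigma'(f_1',\dots,f_m'),W_+) \quad\text{and}\quad (U_-',\tau'(g_1',\dots,g_m'),W_+)
\]
of the two triples, both with source tree $W_+$. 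The plan is to show these two expanded triples are literally equal.

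To see this, let $w_1,\dots,w_m$ be the leaves of $W_+$ in order. Both expanded triples define $\varphi$, and the first sends $C(w_i)$ bijectively onto the cone $C(u'_{\sigma'(i)})$ (a leaf-cone of $T_-'$), while the second sends the same $C(w_i)$ onto a leaf-cone of $U_-'$. Hence these two cones coincide as subsets of $\partial\tree_d$. Because a cone $C(u)$ determines its root $u$ (it is the longest common prefix of its elements), the multisets of leaves of $T_-'$ and $U_-'$ are equal, and since a finite complete rooted subtree is determined by its leaf set, $T_-' = U_-'$. The same matching forces $\sigma' = \tau'$. Finally, on each cone the two triples realize $\varphi$ as $w_i z \mapsto u'_{\sigma'(i)} f_i'(z)$ and $w_i z \mapsto u'_{\sigma'(i)} g_i'(z)$ respectively, and since the canonical identifications $C(w_i)\cong\partial\tree_d\cong C(u'_{\sigma'(i)})$ let one read off the automorphism uniquely, $f_i' = g_i'$ for each $i$. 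Thus the expansions are identical.

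The main obstacle is the bookkeeping in the last step, namely verifying that all three pieces of data (target tree, permutation, state in each slot) are rigidly determined by the action of $\varphi$ once the source tree is fixed to be fine enough to refine both original source trees. Nothing deep is needed beyond the fact that cones determine their roots and that the wreath-recursion expansion is compatible with the action; the argument is essentially a uniqueness-of-representation observation for almost-automorphisms once a sufficiently refined common domain partition is imposed.
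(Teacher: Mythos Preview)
Your proof is correct and follows essentially the same approach as the paper: take a common refinement $W_+$ of the two source trees, expand both triples to have source $W_+$, and observe that $T_-$, the permutation, and the states are uniquely determined by the homeomorphism together with the source tree. The paper compresses your entire ``only if'' paragraph into a single sentence (``the entries $\sigma(f_1,\ldots,f_n)$ and $T_-$ are uniquely determined by $f$ and $T_+$''), and treats the ``if'' direction as already established by the discussion preceding the lemma, but the underlying argument is the same. One cosmetic slip: you used the same letter $n$ for the number of leaves in both original triples, whereas they need not agree.
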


\begin{proof}
Note that if $f = [T_-, \sigma(f_1,\ldots,f_n),T_+]$ is an almost-automorphism, the entries $\sigma(f_1,\ldots,f_n)$ and $T_-$ are uniquely determined by $f$ and $T_+$. The claim therefore follows from the fact that for any two finite rooted trees there is a finite rooted tree containing both.
\end{proof}

Note that every expansion can be obtained by a sequence of expansions of a single leaf, which makes the following lemma particularly useful. By a \emph{$d$-caret} we mean the finite rooted complete subtree of $\tree_d$ with $d$ leaves.

\begin{lemma}[One leaf expansion]\label{lem:expand_one}
 Let $(T_-,\sigma(f_1,\dots,f_n),T_+)$ be a triple representing an element of $\AAut(\tree_d)$ and let $1 \le k \le n$. Let $T_+'$ be the result of adding a single $d$-caret to the $k$th leaf of $T_+$. Then the corresponding expansion of $(T_-,\sigma(f_1,\dots,f_n),T_+)$ is
 \[
 (T_-',\sigma'(f_1,\dots,f_{k-1},f_k^1,\dots,f_k^d,f_{k+1},\dots,f_n),T_+')\text{,}
 \]
where $f_k=\rho(f_k)(f_k^1,\dots,f_k^d)$ is the wreath recursion of $f_k$, $T_-'$ is some tree and $\sigma'$ is some element of $S_{n+d-1}$.
\end{lemma}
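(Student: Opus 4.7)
The statement is essentially a definition-unpacking: the expansion is forced, and the only content is identifying the tree entries of the wreath recursion with the correct pieces. My plan is to just compute directly.

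First, I would set up notation for what happens on the cone being subdivided. Let $x_1,\dots,x_d$ be the elements of $X$ in order, so that adding a $d$-caret to the $k$th leaf $v_k$ of $T_+$ replaces $v_k$ by the $d$ new leaves $v_k x_1,\dots,v_k x_d$, with cones $C(v_k x_j)\subseteq C(v_k)$ forming a partition. By definition of the almost-automorphism, for any $w\in X^\omega$,
\[
[T_-,\sigma(f_1,\dots,f_n),T_+](v_k x_j w) \;=\; u_{\sigma(k)}\, f_k(x_j w).
\]
Now apply the wreath recursion $f_k=\rho(f_k)(f_k^1,\dots,f_k^d)$: by definition this means $f_k(x_j w)=x_{\rho(f_k)(j)}\, f_k^j(w)$. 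Substituting,
\[
[T_-,\sigma(f_1,\dots,f_n),T_+](v_k x_j w) \;=\; u_{\sigma(k)}\, x_{\rho(f_k)(j)}\, f_k^j(w).
\]

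Next I would read off the expanded triple from this formula. The image cone $C(v_k x_j)$ is mapped bijectively onto $C(u_{\sigma(k)} x_{\rho(f_k)(j)})$ by (a copy of) $f_k^j$. Hence, taking $T_-'$ to be the result of adding a $d$-caret to the $\sigma(k)$th leaf of $T_-$, the $d$ new leaves on the bottom, $u_{\sigma(k)} x_1,\dots,u_{\sigma(k)} x_d$, together with the untouched leaves $u_i$ for $i\ne \sigma(k)$, give a list of $n+d-1$ leaves into which the $n+d-1$ leaves of $T_+'$ are bijectively mapped. The defining formula above shows that the induced bijection of cones is precisely: $C(v_i)\to C(u_{\sigma(i)})$ via $f_i$ for $i\ne k$, and $C(v_k x_j)\to C(u_{\sigma(k)} x_{\rho(f_k)(j)})$ via $f_k^j$ for $j=1,\dots,d$. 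This determines a permutation $\sigma'\in S_{n+d-1}$ (depending on the chosen orderings of the leaves of $T_-'$ and $T_+'$) and a tuple of automorphisms, and in the slot ordering matching the leaves of $T_+'$ the tuple is exactly
\[
(f_1,\dots,f_{k-1},f_k^1,\dots,f_k^d,f_{k+1},\dots,f_n),
\]
as claimed.

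Finally I would observe that this triple $(T_-',\sigma'(f_1,\dots,f_{k-1},f_k^1,\dots,f_k^d,f_{k+1},\dots,f_n),T_+')$ fits the definition of an expansion of the original triple: $T_+'$ refines $T_+$ by one caret, the image cones on the bottom refine the cones $C(u_1),\dots,C(u_n)$ (so $T_-'$ is obtained from $T_-$ by refinement), and on each subcone the action is the one obtained by iterating the wreath recursion of the corresponding $f_i$. Since the triple is uniquely determined by $T_+'$ and the original almost-automorphism (as noted in the proof of Lemma~\ref{lem:iff_exp}), this is \emph{the} expansion.

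The only real issue is bookkeeping: the index of the refined leaf on the bottom is $\sigma(k)$ rather than $k$, and the action of $\rho(f_k)$ reshuffles the $d$ new bottom leaves relative to the $d$ new top leaves, which is why the new permutation $\sigma'$ is ``some'' element of $S_{n+d-1}$ rather than something canonical. No genuine obstacle arises.
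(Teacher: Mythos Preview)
Your proof is correct and follows essentially the same direct computation as the paper's proof: both use the wreath recursion formula $f_k(x_j w) = x_{\rho(f_k)(j)} f_k^j(w)$ to read off the tuple entries at the new leaves. You are in fact more explicit than the paper, since you identify $T_-'$ as $T_-$ with a caret added at leaf $\sigma(k)$ and explain why $\sigma'$ is not canonical, whereas the paper leaves both unspecified.
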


\begin{proof}
 Let $v_1,\dots,v_n$ be the leaves of $T_+$ (in order). Write $X=\{x_1,\dots,x_d\}$, so the leaves of $T_+'$ are
\[
v_1,\dots,v_{k-1},v_k x_1,\dots,v_k x_d,v_{k+1},\dots,v_n\text{.}
\]
Now for any $1\le i\le d$ and any $w\in X^*$ we have $f_k(x_i w) = x_{\rho(f_k)(i)} f_k^i(w)$. This implies that $\sigma(f_1,\dots,f_n)$ acts on the partition by cones of vertices of $T_+'$ as
\[
\sigma'(f_1,\dots,f_{k-1},f_k^1,\dots,f_k^d,f_{k+1},\dots,f_n)
\]
for some $\sigma'$, as desired.
\end{proof}

The precise description of $T_-'$ and $\sigma'$ is the content of a \emph{$d$-ary cloning system}, as in \cite{skipper17}, and will not be needed here.

\section{Virtual simplicity}\label{sec:simple}

We are mainly interested in R\"over--Nekrashevych groups that are virtually simple. It turns out this is the case as soon as the abelianization is finite:

\begin{theorem}[Simple commutator {\cite[Theorem~4.7]{nekrashevych17}}]\label{thm:simple}
 For any self-similar $G\le \Aut(\tree_d)$, the commutator subgroup $V_d(G)'$ is simple.
\end{theorem}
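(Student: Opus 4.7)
The plan is to run a classical Epstein--Higman-style simplicity argument adapted to the almost-automorphism setting. Let $N \trianglelefteq V_d(G)'$ be a nontrivial normal subgroup; the goal is to show $N = V_d(G)'$. Fix a nontrivial $h \in N$. Since $h$ moves some point of $\partial\tree_d$, by continuity there is a cone $U = C(u)$ with $h(U) \cap U = \emptyset$. Let $H_U \le V_d(G)$ denote the subgroup of elements that fix $\partial\tree_d \setminus U$ pointwise; identifying $U$ with $\partial\tree_d$ via the canonical homeomorphism of cones gives $H_U \cong V_d(G)$.

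The central step is a displacement-plus-commutator lemma: for any $g, k \in V_d(G)' \cap H_U$, the commutator $[g, k]$ lies in $N$. To establish this, I set $\alpha \defeq [h, g^{-1}] = h g^{-1} h^{-1} g$. Since $g \in V_d(G)'$ normalizes $N$ and $h \in N$, one has $\alpha \in N$. A direct computation using $h(U) \cap U = \emptyset$ shows that $\alpha$ is supported on $U \sqcup h(U)$, acting as $g$ on $U$ and as $h g^{-1} h^{-1}$ on $h(U)$. Because $k$ is supported in $U$ it fixes $h(U)$ pointwise, so $k \alpha k^{-1}$ acts as $k g k^{-1}$ on $U$ and still as $h g^{-1} h^{-1}$ on $h(U)$. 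Therefore $\alpha (k \alpha k^{-1})^{-1} \in N$ acts as $[g, k]$ on $U$ and as the identity elsewhere, giving $[g, k] \in N$.

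To promote this to $V_d(G)' \subseteq N$, I use that the identification $H_U \cong V_d(G)$ sends $H_U'$ to (a copy of) $V_d(G)'$, so the local lemma produces $(H_U')' \subseteq N$. Assuming perfection of $V_d(G)'$, this upgrades to $H_U' \subseteq N$. Since $V_d \le V_d(G)$ acts transitively on cones, conjugating by elements of $V_d$ yields $H_{U'}' \subseteq N$ for every cone $U'$. Finally, the expansion procedure of Section~\ref{sec:nekr} (Lemma~\ref{lem:expand_one} iterated) allows refining any triple representing an element of $V_d(G)$ so that a commutator $[a, b] \in V_d(G)'$ decomposes as a product of commutators each supported on a single small cone; this yields $V_d(G)' \subseteq N$.

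The principal obstacle is establishing perfection of $V_d(G)'$ in sufficient generality: without it, the local lemma only yields $(H_U')' \subseteq N$ rather than $H_U' \subseteq N$. A related subtlety is the fragmentation step, where one must verify that after sufficient expansion the permutation part $\sigma$ of $(T_-, \sigma(g_1, \ldots, g_n), T_+)$ splits into a product of transpositions of deep cones, each realizable as a commutator supported on a pair of cones, so that an arbitrary $[a, b] \in V_d(G)'$ truly decomposes as a product of commutators supported on small cones. Handling the interaction between the local $G$-data and the global permutation structure is the main bookkeeping challenge.
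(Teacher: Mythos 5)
The paper does not contain a proof of this theorem; it is cited from \cite[Theorem~4.7]{nekrashevych17}, so there is no in-paper argument to compare against. Evaluating the proposal on its own terms: your displacement lemma is correct and cleanly set up. With $h(U)\cap U=\emptyset$ and $g,k\in V_d(G)'\cap H_U$, the element $\alpha=[h,g^{-1}]=h\cdot (g^{-1}hg)^{-1}$ lies in $N$ because $g\in V_d(G)'$ normalizes $N$, is supported on $U\sqcup h(U)$, and acts as $g$ on $U$; since $k$ fixes $h(U)$ pointwise, $\alpha(k\alpha k^{-1})^{-1}\in N$ equals $[g,k]$ on $U$ and the identity elsewhere. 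This is the standard Epstein--Higman move and it works.

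However, both difficulties you flag are genuine gaps, and the second is worse than you suggest. On perfection: you obtain $(H_U')'\subseteq N$ and invoke perfection of $V_d(G)'$ to upgrade this to $H_U'\subseteq N$. But $V_d(G)'$ contains the infinite nonabelian simple group $V_d'$, so simplicity of $V_d(G)'$ already implies its perfection; assuming perfection grants a nontrivial portion of the conclusion and would need an independent proof. Without it, one can still run your lemma with $g,k\in(V_d\cap H_U)'$ to deduce $N\supseteq(V_d\cap H_U)'\cong V_d'$ (using that $V_d'$ is already known to be simple, hence perfect), which is a useful foothold but not the theorem.

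On fragmentation: the claimed decomposition of $[a,b]\in V_d(G)'$ into a product of commutators each supported in a single proper cone is false as stated. Every proper cone is contained in some level-$1$ cone $C(x)$, so any element supported in a proper cone preserves each level-$1$ cone setwise, hence so does any product of such elements. But for $d=3$ the $3$-cycle of the level-$1$ cones is a commutator of two transpositions of cones, lies in $V_3\subseteq V_3(G)$, and preserves no level-$1$ cone setwise; it therefore cannot be written in the claimed form. Iterating Lemma~\ref{lem:expand_one} does not help here: expansion refines cone partitions but never converts a globally mixing permutation into cone-supported factors. To close the argument one must instead show that the $V_d(G)'$-normal closure of $\bigcup_U H_U'$ is all of $V_d(G)'$, and this is entangled with the structure of the abelianization described in \cite[Theorem~9.14]{nekrashevych04}, which your proposal does not invoke. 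In short: right skeleton, correct local lemma, but the global steps are genuinely incomplete.
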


See \cite[Theorem~9.14]{nekrashevych04} for a description of the abelianization and commutator subgroup of $V_d(G)$. In order to ensure finite abelianization we introduce the following notion.

\begin{definition}[Coarsely diagonal]
 Let $G\le \Aut(\tree_d)$. We call $G$ \emph{coarsely diagonal} if for every $g\in G$ and every state $g'$ of $g$ the element $(g')^{-1}g$ has finite order. We will also refer to a group $G$ as having a \emph{faithful coarsely diagonal action} on $\tree_d$ if $G$ acts faithfully on $\tree_d$ and its resulting image in $\Aut(\tree_d)$ is coarsely diagonal.
\end{definition}

\begin{theorem}[Finite abelianization]\label{thm:fin_abln}
 Let $G\le \Aut(\tree_d)$ be a finitely generated, coarsely diagonal self-similar group. Then $V_d(G)$ has finite abelianization.
\end{theorem}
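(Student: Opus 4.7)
The plan is to realize $A := V_d(G)^{\mathrm{ab}}$ as a finitely generated abelian group all of whose generators are torsion. Every element $[T_-,\sigma(g_1,\ldots,g_n),T_+]$ of $V_d(G)$ decomposes as $[T_-,\sigma,T_+]\cdot[T_+,\id(g_1,\ldots,g_n),T_+]$, and the diagonal factor is a commuting product of ``placed'' elements $\widehat g^{(T)}_v := [T,\id(1,\ldots,g,\ldots,1),T]$ (with $g\in G$ sitting in the slot of a leaf $v$ of $T$ and identity in every other slot). For any choice of $(T,v)$ one can build a $\pi\in V_d$ from a triple sending the leaf $v$ of its source tree to the leaf $x_1$ of its range tree, and this $\pi$ conjugates $\widehat g^{(T)}_v$ to the standard placed element $\widehat g := [T_1,\id(g,1,\ldots,1),T_1]$, where $T_1$ is the $d$-caret. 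Hence $V_d(G) = \langle V_d,\{\widehat g : g\in G\}\rangle$; the set $\{\widehat g:g\in G\}$ is a subgroup of $V_d(G)$ isomorphic to $G$, hence finitely generated, so $V_d(G)$ and $A$ are finitely generated; and in $A$ all placed versions of $g$ have a common class, so $\phi\colon G\to A$, $g\mapsto[\widehat g]$, is a well-defined homomorphism.

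The heart of the argument is a self-referential identity for $\phi$ produced by Lemma~\ref{lem:expand_one}. Expanding the leaf $x_1$ of $T_+=T_1$ in the representation $\widehat g = [T_1,\id(g,1,\ldots,1),T_1]$ turns the entry $g$ into its wreath recursion $\rho(g)(g^1,\ldots,g^d)$, producing a new triple $[T_-',\sigma'(g^1,\ldots,g^d,1,\ldots,1),T_+']$ with $\sigma'\in S_{2d-1}$ and appropriate trees $T_\pm'$. The explicit form of $T_\pm'$ and $\sigma'$ is irrelevant in $A$: separating off the permutation part lumps them all into a single element of $V_d$, the $(d-1)$ trivial placed entries map to zero in $A$, and each of the remaining $d$ placed entries is a placed version of a level-$1$ state $g^j$, whose class in $A$ equals $\phi(g^j)$ by the previous paragraph. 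What survives is
\[
\phi(g) = [v] + \sum_{j=1}^d \phi(g^j) \quad \text{in } A
\]
for some $v\in V_d$.

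It remains to combine the hypotheses. The Higman--Thompson group $V_d$ has finite abelianization (for example by \cite{higman74}), so the image of $V_d$ in $A$ is finite, hence torsion; in particular $[v]$ is torsion. Coarse diagonality says $(g^j)^{-1}g$ has finite order in $G$ for every level-$1$ state $g^j$, and since $\phi$ is a homomorphism this forces $\phi(g^j)\equiv\phi(g)$ modulo the torsion subgroup of $A$. Substituting into the displayed identity yields $\phi(g)\equiv d\,\phi(g)$ modulo torsion, so $(d-1)\phi(g)$ is torsion and hence $\phi(g)$ itself is torsion. Every generator of the finitely generated abelian group $A$ is therefore torsion, so $A$ is finite. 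The main technical point is the bookkeeping in paragraph two; once one is convinced that only the pattern of $G$-entries survives in $A$, the finiteness of $V_d^{\mathrm{ab}}$ and coarse diagonality conspire exactly so that the recursive identity collapses to the desired torsion condition.
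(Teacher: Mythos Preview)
Your proof is correct and follows essentially the same route as the paper's: both establish that $V_d(G)$ is generated by $V_d$ together with the copy of $G$ placed at a fixed leaf (your $\widehat g$ is the paper's $\iota_1(g)$), use conjugacy in $V_d$ to identify all placed versions of $g$ in the abelianization, unfold $\widehat g$ via the wreath recursion to obtain the identity $\phi(g)=[v]+\sum_j\phi(g^j)$ with $v\in V_d$, and then combine finiteness of $V_d^{\mathrm{ab}}$ with coarse diagonality to deduce that $(d-1)\phi(g)$ is torsion. The only cosmetic difference is that you rederive the generating set (the paper quotes it as Lemma~\ref{lem:nekr_gens}) and phrase the recursion via expansion of triples rather than via the $\iota_u$ embeddings.
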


In the proof of the theorem we will use an explicit generating set for $V_d(G)$. For $u\in X^*$ let $\iota_u \colon G\to V_d(G)$ be the embedding sending $g$ to the automorphism that applies $g$ to the cone $C(u)$ and is trivial outside this cone. More rigorously, this means $\iota_u(g)(uw)=ug(w)$ for all $w\in X^*$ and $\iota_u(g)(w)=w$ whenever $u$ is not a prefix of $w$. If we identify $X$ with $\{1,\dots,d\}$ then we can refer to the maps $\iota_i$ for $1\le i\le d$.

\begin{lemma}[{\cite[Lemma~5.11]{nekrashevych17}\label{lem:nekr_gens}}]
 The group $V_d(G)$ is generated by $V_d\cup \iota_1(G)$.
\end{lemma}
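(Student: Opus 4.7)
The plan is to take an arbitrary triple $(T_-,\sigma(g_1,\dots,g_n),T_+)$ and factor it into (a)~a ``purely permutational'' piece living in $V_d$ and (b)~a ``purely diagonal'' piece which is a product of elements each supported on a single cone, then show that each such cone-supported element is a conjugate of some $\iota_1(g)$ by an element of $V_d$.

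First I would verify the factorization
\[
 [T_-,\sigma(g_1,\dots,g_n),T_+] \;=\; [T_-,\sigma,T_+]\cdot[T_+,(g_1,\dots,g_n),T_+]
\]
by direct computation on each cone $C(v_i)$: the right-hand side sends $v_i w$ first to $v_i g_i(w)$ and then to $u_{\sigma(i)} g_i(w)$, matching the definition of the left-hand side. The first factor is manifestly in $V_d$. For the second factor, let $v_1,\dots,v_n$ be the leaves of $T_+$ and define $\iota_{v_i}(g)\in V_d(G)$ to act by $g$ on $C(v_i)$ and trivially elsewhere; then the elements $\iota_{v_i}(g_i)$ have pairwise disjoint supports, commute, and their product equals $[T_+,(g_1,\dots,g_n),T_+]$.

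The core step is therefore to show that each $\iota_v(g)$, for $v\in X^*$ and $g\in G$, lies in $\langle V_d\cup\iota_1(G)\rangle$. For this I would use the standard fact that $V_d$ acts transitively on cones by prefix replacement: for any $v\in X^*$ there exists $\pi_v\in V_d$ with $\pi_v(1w)=vw$ for all $w\in X^\omega$ (when $v$ has $1$ as a prefix or vice versa, one builds $\pi_v$ by first routing $C(1)$ to some cone incompatible with $v$ and then to $C(v)$; this is possible because any two finite complete rooted subtrees of $\tree_d$ can be simultaneously refined). A direct unraveling of the action then shows $\pi_v \,\iota_1(g)\,\pi_v^{-1} = \iota_v(g)$: on $C(v)$ the conjugate acts as $vw\mapsto 1w\mapsto 1g(w)\mapsto vg(w)$, while on the complement of $C(v)$ the element $\pi_v^{-1}$ lands outside $C(1)$ and $\iota_1(g)$ does nothing.

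Combining the two factorizations yields every generator $[T_-,\sigma(g_1,\dots,g_n),T_+]$ as a product of an element of $V_d$ with finitely many conjugates (by elements of $V_d$) of elements of $\iota_1(G)$, which completes the proof. The only technical obstacle is the transitivity statement for $V_d$ on cones in the case where $v$ and $1$ are comparable as prefixes; everything else is bookkeeping with the expansion formalism of Lemma~\ref{lem:expand_one}.
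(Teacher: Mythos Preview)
The paper does not give its own proof of this lemma; it simply cites \cite[Lemma~5.11]{nekrashevych17}. Your argument is the standard one and is correct. The only small point to make explicit is that the conjugation $\pi_v\,\iota_1(g)\,\pi_v^{-1}=\iota_v(g)$ requires $v\ne\varnothing$ (there is no $\pi_v\in V_d$ taking $C(1)$ onto all of $\partial\tree_d$), so before factoring you should first expand once if $T_+$ is the trivial tree; after that every leaf $v_i$ is a nonempty word and your transitivity argument applies. Incidentally, the conjugacy fact you isolate is exactly what the paper uses (without proof) in the argument for Theorem~\ref{thm:fin_abln}.
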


\begin{proof}[Proof of Theorem~\ref{thm:fin_abln}]
 By Lemma~\ref{lem:nekr_gens}, $V_d(G)$ is generated by $V_d\cup \iota_1(G)$. Since $V_d$ is virtually simple \cite{higman74} its image in the abelianization of $V_d(G)$ is finite, so it suffices to show that the image of $\iota_1(G)$ in the abelianization of $V_d(G)$ is also finite. We will do this by proving that the image of $\iota_1(g)$ in the abelianization of $V_d(G)$, for any $g\in G$, is torsion (this gives us what we want since $G$ is finitely generated). Write $\overline{f}$ for the image of $f\in V_d(G)$ in the abelianization of $V_d(G)$. Note that for any $u,v\in X^*\setminus\{\varnothing\}$ and any $g\in G$, the elements $\iota_u(g)$ and $\iota_v(g)$ are conjugate in $V_d(G)$ by an element of $V_d$ that takes $uw$ to $vw$ for all $w\in X^*$, so $\overline{\iota_u(g)}=\overline{\iota_v(g)}$. Also note that if $g=\rho(g)(g_1,\dots,g_d)$ is the wreath recursion for $g$, then
\[
\iota_u(g)=\iota_u(\rho(g))\prod_{i=1}^d \iota_{ui}(g_i)\text{,}
\]
where by $\iota_u(\rho(g))$ we mean the automorphism acting as $\rho(g)$ on $C(u)$ and the identity elsewhere. In particular $\overline{\iota_u(g)}=\overline{\iota_u(\rho(g))}+\sum_{i=1}^d \overline{\iota_{ui}(g_i)}$. Using $u=1$ this gives us $\overline{\iota_1(g)}=\overline{\iota_1(\rho(g))}+\sum_{i=1}^d \overline{\iota_{1i}(g_i)}$, which equals $\overline{\iota_1(\rho(g))}+\sum_{i=1}^d \overline{\iota_1(g_i)}$. Now since $G$ is coarsely diagonal, for each $1\le i\le d$ there exists a torsion element $a_i$ of the abelianization of $V_d(G)$ such that $\overline{\iota_1(g_i)}=\overline{\iota_1(g)}+a_i$, so we get
\[
\overline{\iota_1(g)}=\overline{\iota_1(\rho(g))}+d(\overline{\iota_1(g)})+a_1+\cdots+a_d\text{.}
\]
This tells us that $(d-1)\overline{\iota_1(g)}=-\overline{\iota_1(\rho(g))}-a_1-\cdots-a_d$. Since $\overline{\iota_1(\rho(g))}$ and all the $a_i$ are torsion, we conclude that $\overline{\iota_1(g)}$ is torsion, and we are done.
\end{proof}

Another consequence of Lemma~\ref{lem:nekr_gens} is the following.

\begin{observation}\label{obs:fin_abln_easy}
 If $G$ has finite abelianization then so does $V_d(G)$.
\end{observation}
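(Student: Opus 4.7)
The plan is to extract this as a direct consequence of Lemma~\ref{lem:nekr_gens}, essentially mirroring the opening moves of the proof of Theorem~\ref{thm:fin_abln} but without needing the coarsely diagonal hypothesis or the recursion argument on states. Write $\overline{(\cdot)}$ for the quotient map $V_d(G)\to V_d(G)^{\mathrm{ab}}$. By Lemma~\ref{lem:nekr_gens}, $V_d(G)$ is generated by $V_d\cup \iota_1(G)$, so $V_d(G)^{\mathrm{ab}}$ is generated by the abelian subgroups $\overline{V_d}$ and $\overline{\iota_1(G)}$, and equals their sum.

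Now I would handle each summand separately. First, $\overline{V_d}$ is a quotient of $V_d^{\mathrm{ab}}$, which is finite because $V_d$ is virtually simple \cite{higman74}. Second, because $\iota_1\colon G\to V_d(G)$ is a group homomorphism, the composition $G\to V_d(G)\to V_d(G)^{\mathrm{ab}}$ factors through $G^{\mathrm{ab}}$; hence $\overline{\iota_1(G)}$ is a quotient of $G^{\mathrm{ab}}$, which is finite by hypothesis. Thus $V_d(G)^{\mathrm{ab}}=\overline{V_d}+\overline{\iota_1(G)}$ is a sum of two finite abelian subgroups and is therefore finite.

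There is no real obstacle: once Lemma~\ref{lem:nekr_gens} is in hand, the statement is immediate, which is why the authors record it as an observation rather than a theorem. The only thing to be mindful of is that $G$ itself need not be finitely generated for the argument to go through, since $\overline{\iota_1(G)}$ being finite follows directly from $G^{\mathrm{ab}}$ being finite via the homomorphism $\iota_1$, without any iteration of the wreath recursion.
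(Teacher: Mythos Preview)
Your proof is correct and follows essentially the same approach as the paper: the paper's proof simply refers back to the opening of the proof of Theorem~\ref{thm:fin_abln} to reduce to showing that $\overline{\iota_1(G)}$ is finite, and then notes this is immediate from the hypothesis. You have just spelled out those steps in slightly more detail.
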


\begin{proof}
 As in the proof of Theorem~\ref{thm:fin_abln} it suffices to show that the image of $\iota_1(G)$ in the abelianization of $V_d(G)$ is finite, and since $G$ has finite abelianization this is immediate.
\end{proof}

\section{Proving type $\F_{n-1}$}\label{sec:positive}

In this section we prove that $V_d(G)$ is of type $\F_{n-1}$ if $G \le \Aut(\tree_d)$ is self-similar and of type $\F_{n-1}$. The proof is implicitly contained in \cite{skipper17}, and relies on established machinery that has been used to prove that various generalizations of Thompson groups are of type $\F_\infty$; see \cite{brown87,stein92,farley03,fluch13,bux16,belk16,martinez-perez16,thumann17}. We will use the following recent formalization of that machinery:

\begin{theorem}[{\cite[Theorem~3.12]{witzel17}}]\label{thm:ore_criterion}
Let $\calC$ be a right-Ore category and let $*$ be an object of $\calC$. Let $\calE$ be a locally finite left-Garside family of morphisms that is closed under taking factors. Let $\delta \colon \Ob(\calC) \to \N$ be a height function such that $\{x \in \Ob(\calC) \mid \delta(x) \le n\}$ is finite for every $n \in \N$. Assume
\begin{enumerate}[align=left,leftmargin=*,widest=(\textsc{stab})]
\item[\textsc{(stab)}] $\calC^\times(x,x)$ is of type $\F_n$ for all $x \in \Ob(\calC)$,
\item[\textsc{(lk)}] $\abs{E(x)}$ is $(n-1)$-connected for all $x$ with $\delta(x)$ beyond a fixed bound.
\end{enumerate}
Then $\pi_1(\calC,*)$ is of type $\F_n$.
\end{theorem}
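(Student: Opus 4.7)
The plan is to apply Brown's finiteness criterion to a classifying space built from the Ore category $\calC$. The right-Ore condition gives a calculus of right fractions and allows one to form the groupoid completion of $\calC$, whose vertex group at $*$ is $\pi_1(\calC,*)$. The left-Garside family $\calE$ then supplies a distinguished set of ``simple'' morphisms from which to assemble a geometric model. Concretely, one builds a complex $X$ whose cells are indexed by composable strings of morphisms in $\calE$ (thought of as greedy factorizations), glued along common factors; closure of $\calE$ under factors ensures that the simplicial structure on $X$ is well-defined, and local finiteness ensures that each cell has finitely many cofaces.

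The first main step is to prove that $X$ is contractible, so that $X/\pi_1(\calC,*)$ is a classifying space with vertex stabilizers equal to the automorphism groups $\calC^\times(x,x)$. This is where the Garside property does its work: the left-Garside axiom encodes a greedy normal form, which can be repackaged as a discrete Morse function on $X$ whose descending link at an object $x$ is (a deformation retract of) the complex $\abs{E(x)}$. Filtering by the sublevel sets $X_{\le n} = \{x \in \Ob(\calC) : \delta(x) \le n\}$ and analyzing the attaching data, one sees that $X_{\le n+1}$ is built from $X_{\le n}$ by coning off these links; iterating gives contractibility of the direct limit.

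With $X$ in hand, Brown's criterion applies directly. The assumption that $\{x : \delta(x) \le n\}$ is finite makes each $X_{\le n}$ cocompact modulo the groupoid action; hypothesis \textsc{(stab)} guarantees that cell stabilizers are of type $\F_n$; and hypothesis \textsc{(lk)} provides the essential $(n-1)$-connectivity of the ascending links needed for Brown's criterion to yield type $\F_n$ for $\pi_1(\calC,*)$. The main obstacle will be the contractibility of $X$ together with the precise identification of its descending links with $\abs{E(x)}$: the Garside axioms must be wielded carefully so that the Morse-theoretic reduction behaves as advertised, producing exactly the cone attachments predicted by $\calE$ and no spurious ones.
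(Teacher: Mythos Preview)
The paper does not prove this theorem: it is quoted from \cite[Theorem~3.12]{witzel17} and used as a black box. There is therefore no ``paper's own proof'' to compare your proposal against. What the paper does is verify the hypotheses of this cited result in the specific setting of the category $\calN$ underlying R\"over--Nekrashevych groups (Lemmas~\ref{lem:decomposition}--\ref{lem:garside}, Corollaries~\ref{cor:unsubdivision}--\ref{cor:connectivity}) and then invoke the theorem to conclude Theorem~\ref{thm:positive}.

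That said, your sketch is a fair outline of the strategy used in \cite{witzel17}: one builds a contractible complex on which $\pi_1(\calC,*)$ acts, filters it by the height function, identifies descending links via the Garside structure, and applies Brown's criterion. A couple of points of imprecision are worth flagging if you intend to flesh this out. First, contractibility of $X$ does not come from the Morse argument on the filtration by $\delta$; it comes separately (in the Ore/Garside framework, typically from the existence of common right multiples, which makes the poset of elements directed). The Morse/link analysis is used only to pass between successive filtration stages, not to establish contractibility of the whole complex. Second, the identification of the relevant links with $\abs{E(x)}$ is genuinely delicate and is where the Garside axioms (existence of heads, closure under factors) are actually used; your sketch correctly flags this as the main technical point but does not indicate how it is carried out.
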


We will explain the relevant notions as we go. We start by describing certain categories of homeomorphisms underlying R\"over--Nekrashevych groups. Let $C = X^\omega$ and for each $k \in \N$ let $C_k \defeq \{1,\ldots,k\} \times X^\omega$. Note that $C_k$ is the boundary of the infinite $d$-ary forest on $k$ roots. The set of vertices of this forest is $\{1,\ldots,k\} \times X^*$. All the categories in what follows will have as objects the spaces $C_k$ for $k \in \N$.

Notationally, we treat morphisms in a category as elements of the category. If $\calD$ is a category with objects $\Ob(\calD) = \{C_k \mid k \in \N\}$ we denote the set of morphisms from $C_\ell$ to $C_k$ in $\calD$ by $\calD(C_k,C_\ell)$. Similarly, $\calD(C_k,-)$ and $\calD(-,C_\ell)$ denote the set of morphisms with target $C_k$ respectively source $C_\ell$. We denote sets of invertible such morphisms by replacing $\calD$ with $\calD^\times$ in the notation. We always write morphisms as pointing left, which is convenient when evaluating compositions: if $A \stackrel{f}{\leftarrow} B \stackrel{g}{\leftarrow} C$ then $fg(c) \in A$ for $c \in C$. This is purely notational; all our morphisms represent bijective maps, and one could equivalently reverse all arrows and decorate all morphisms with an exponent $-1$ (cf.\ also \cite[Remark~1.1]{witzel17} and the preceding paragraphs).

A finite rooted complete subtree $T$ of $\tree_d$ with $m$ leaves $(u_1,\ldots,u_m)$ (in order) defines a homeomorphism $\varphi_T \colon C \leftarrow C_m, u_iw \mapsfrom (i,w)$. This works analogously for forests: let $F$ be a finite rooted $d$-ary forest (by which we will always mean a disjoint union of finitely many finite rooted complete subtrees of $\tree_d$) on $r$ roots with $n$ leaves. Each leaf of $F$ is an element $(i,u) \in \{1,\ldots,r\} \times X^*$ and we index the words in $X^*$ that occur in some leaf from left to right as $u_1,\ldots,u_n$. For each $1\le i\le r$ we choose indices $a_i, z_i$ such that the leaves of the $i$th tree are those $(i,u_j)$ with $a_i \le j \le z_i$. The homeomorphism defined by $F$ is $\varphi_F \colon C_r \leftarrow C_n, (i,u_jw) \mapsfrom (j,w)$ where $i$ is the index satisfying $a_i \le j \le z_i$. We take $\calF_d$ to be the category whose morphisms are the $\varphi_F$ for $F$ a finite rooted $d$-ary forest.

Let $\calW$ be the groupoid of homeomorphisms given by the (left) action of the wreath product $S_k \wr G$ on $C_k$, where $S_k$ acts on $\{1,\ldots,k\}$ and $G$ acts on the copies of $C$. In particular, note that $\calW$ contains no morphisms between distinct objects. Now define the category $\calN$, which is the one we are mainly interested in, to have as its morphisms all products of morphisms from $\calF_d$ and $\calW$.

\begin{lemma}\label{lem:decomposition}
Every morphism in $\calN$ can be written uniquely as $fw$ for some $f \in \calF_d$, $w \in \calW$.
\end{lemma}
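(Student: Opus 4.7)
The plan is to handle existence and uniqueness separately. Existence reduces to a single \emph{straightening} move, namely rewriting a product $wf$ with $w\in \calW$ and $f\in\calF_d$ as $f'w'$ with $f'\in\calF_d$ and $w'\in\calW$; iterated application of this move, combined with the (routine) closure of $\calF_d$ under composition of forests and of $\calW$ under composition in the groupoid, brings any word in the generating morphisms of $\calN$ into the desired normal form.

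For the straightening move itself, I would write $w=\sigma(h_1,\dots,h_r)\in S_r\wr G$ and $f=\varphi_F$ for a forest $F$ whose $i$-th tree is $T_i$ with leaves $u_{a_i},\dots,u_{z_i}$. Because each $h_i\in \Aut(\tree_d)$ preserves lengths of words, the image $h_i(T_i)$ is again a finite rooted complete subtree of $\tree_d$; I would take the new forest $F'$ to have $h_i(T_i)$ in position $\sigma(i)$ and set $f'=\varphi_{F'}$. The remaining homeomorphism $w'$ that makes $wf=f'w'$ is then determined on each root cone $\{j\}\times C$ of $C_n$ by a permutation of blocks (coming from $\sigma$ and the relative orderings of leaves) together with the state of the relevant $h_i$ at the corresponding leaf of $T_i$. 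Self-similarity of $G$ is exactly the hypothesis that places each such state back in $G$, so $w'$ lands in $S_n\wr G$ and hence in $\calW$; this is essentially an iterated form of Lemma~\ref{lem:expand_one}.

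For uniqueness, suppose $fw=f'w'$ as morphisms $C_r\leftarrow C_n$. Every element of $\calW$ permutes the $n$ root cones $\{1\}\times C,\dots,\{n\}\times C$ of $C_n$ amongst themselves, so the image of this canonical partition under $fw$ is exactly the partition of $C_r$ whose parts are the cones $\{i\}\times C(u)$ indexed by the leaves $u$ of the $i$-th tree of $F$; the same statement holds for $f'w'$ with $F$ replaced by $F'$. Equality of maps yields equality of these partitions, and a finite rooted complete subtree of $\tree_d$ is determined by its collection of leaf cones, so $F=F'$ and hence $f=f'$; cancelling the bijection $f$ then gives $w=w'$.

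The main obstacle is the straightening move: the bookkeeping of which state of which $h_i$ ends up at which leaf, and of the permutation of leaf blocks induced by $\sigma$, is slightly fiddly, but no part of it is conceptually subtle once self-similarity is invoked. The uniqueness half is essentially a statement about partitions of $C_r$ by cones.
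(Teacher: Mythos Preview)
Your proposal is correct, and its architecture (a straightening move $wf\mapsto f'w'$ for existence, a separate short argument for uniqueness) matches the paper's. The details differ in both halves. For existence the paper splits into the cases $w\in S_r$ and $w\in G^r$ and writes down explicit formulas; you treat a general $w=\sigma(h_1,\dots,h_r)$ in one step and let the new forest carry $h_i(T_i)$ in position $\sigma(i)$, which is the right construction since a nontrivial tree automorphism need not fix a given finite complete rooted subtree setwise. For uniqueness the paper uses order: each $\varphi_F$ is order-preserving for the lexicographic order on the $C_k$, so $(f')^{-1}f=w'w^{-1}$ would be an order-preserving element of $S_n\wr G$, and the only order-preserving tree automorphism is the identity. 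Your partition argument (the image under $fw$ of the root-cone partition of $C_n$ is the leaf-cone partition determined by $F$, and a complete rooted forest is recovered from its leaf cones) reaches the same conclusion by a different route and has the minor advantage of not depending on the chosen total order on $X$.
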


\begin{proof}
First we prove uniqueness. Suppose $fw = f'w' \in \calN(C_k,C_\ell)$. Then $f = f'(w'w^{-1})$, with $w'w^{-1}$ a homeomorphism of $C_\ell$. Since $f$ and $f'$ identify $C_k$ and $C_\ell$ in an order-preserving way, in fact $w'w^{-1}$ has to be the identity.

Now we prove existence. It suffices to show that if $w' \in \calW(C_r,C_r)$ and $f' \in \calF_d(C_r,C_n)$ then there exists $f \in \calF_d(C_r,C_n)$ and $w \in \calW(C_n,C_n)$ such that $w'f' = fw$. Since $\calW(C_r,C_r)=S_r \wr G$, it is enough to assume $w'\in S_r \cup G^r$. Let $F'$ be the forest with $r$ roots and $n$ leaves such that $f' = \varphi_{F'}$. Say the leaves of $F'$ are $((1,u_{a_1}),\ldots,(1,u_{z_1}),\ldots,(r,u_{a_r}),\ldots,(r,u_{z_r}))$.

If $w' \in S_r$ then $w'f' = fw$ with $f = \varphi_F$ such that the leaves of $F$ are
\[
((1,u_{a_{(w')^{-1}(1)}}), \ldots, (1,u_{z_{(w')^{-1}(1)}}),\ldots,(r,u_{a_{(w')^{-1}(r)}}), \ldots, (r,u_{z_{(w')^{-1}(r)}}))
\]
and $w$ the permutation given by $w(j) = w'(a_i) + (j-a_i)$ where $i$ is such that $a_i \le j \le z_i$.

If $w' = (g_1',\ldots,g_r') \in G^r$ then $w'f' = fw$ with $f = f'$ and
\[
w = \sigma (g_1, \ldots, g_n)\text{,}
\]
where $g_j$ is the state of $g_i$ at $u_{j}$ for $a_i \le j \le z_i$, and $\sigma\in S_n$ is defined by $\sigma(j)=\theta_i(j-(a_i-1))+(a_i-1)$ whenever $a_i\le j\le z_i$, with $\theta_i\in S_{z_i-(a_i-1)}$ the permutation induced on $\{j-(a_i-1) \mid a_i\leq j \leq z_i\}$ by $g_i'$.
\end{proof}

A category is \emph{right-Ore} if it is \emph{cancellative}, meaning that $abc = ab'c$ implies $b=b'$, and has \emph{common right multiples}, meaning that if $a$ and $a'$ have the same target then there exist $b$ and $b'$ such that $ab = a'b'$.

\begin{lemma}
\label{lem:ore_noetherian}
The category $\calN$ is right-Ore.
\end{lemma}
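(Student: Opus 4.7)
My plan is to verify the two defining properties of a right-Ore category separately.

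Cancellativity is essentially automatic: every morphism of $\calF_d$ and every morphism of $\calW$ is a bijective homeomorphism between spaces of the form $C_k$, so every composite in $\calN$ is a bijection, and two morphisms of $\calN$ agree if and only if they agree as set maps. Consequently, an equation $abc = ab'c$ in $\calN$ reads as an equality of bijections; cancelling $a$ on the left and $c$ on the right pointwise yields $b = b'$ as functions, hence as morphisms of $\calN$.

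For common right multiples, I take $a \in \calN(C_k, C_\ell)$ and $a' \in \calN(C_k, C_{\ell'})$ and use Lemma~\ref{lem:decomposition} to write both in normal form $a = f_a w_a$ and $a' = f_{a'} w_{a'}$, where $f_a = \varphi_{F_a}$ and $f_{a'} = \varphi_{F_{a'}}$ come from finite rooted $d$-ary forests $F_a, F_{a'}$ on $k$ roots and $w_a, w_{a'} \in \calW$. The essential combinatorial input is that any two such forests on the same $k$ roots admit a common expansion $F''$: applying the elementary fact that any two finite rooted subtrees of $\tree_d$ are contained in a common one to each of the $k$ roots, one obtains $F'' = F_a \cup E_a = F_{a'} \cup E_{a'}$, where $E_a$ and $E_{a'}$ are forests attached to the leaves of $F_a$ and $F_{a'}$ respectively. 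Setting $e_a \defeq \varphi_{E_a}$ and $e_{a'} \defeq \varphi_{E_{a'}}$ then gives $f_a e_a = \varphi_{F''} = f_{a'} e_{a'}$ in $\calF_d$.

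It remains to absorb the wreath parts, which is the step that makes the argument work. The crucial observation is that $\calW$ is a groupoid (it is given by the $S_k \wr G$-action on each $C_k$), so the inverses $w_a^{-1}$ and $w_{a'}^{-1}$ already lie in $\calW \subset \calN$. Defining $b \defeq w_a^{-1} e_a$ and $b' \defeq w_{a'}^{-1} e_{a'}$ then produces morphisms of $\calN$ with the correct sources $C_\ell, C_{\ell'}$ and a common target (the object corresponding to the leaves of $F''$), and the short computation $ab = f_a w_a w_a^{-1} e_a = f_a e_a = f_{a'} e_{a'} = a' b'$ closes the verification. I do not foresee a genuine obstacle; the mild subtlety is recognizing that because $\calW$ is a groupoid, its elements can be absorbed for free, so no separate Ore-type construction for the wreath part is needed.
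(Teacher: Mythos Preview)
Your proof is correct and follows essentially the same approach as the paper's: cancellativity is immediate because all morphisms are bijections, and common right multiples reduce to $\calF_d$ since $\calW$ is a groupoid, with the union of forests supplying the common right multiple in $\calF_d$. Your version simply spells out the reduction step (via the explicit $b = w_a^{-1}e_a$, $b' = w_{a'}^{-1}e_{a'}$) that the paper compresses into the single sentence ``Since morphisms in $\calW$ are invertible, to see that $\calN$ has common right multiples it suffices to see that $\calF_d$ has common right multiples.''
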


\begin{proof}
That $\calN$ is cancellative is clear because all the morphisms are invertible when viewed as maps. Since morphisms in $\calW$ are invertible, to see that $\calN$ has common right multiples it suffices to see that $\calF_d$ has common right multiples. If $F$ and $F'$ are $d$-ary forests on $r$ roots, regarded as rooted subforests of the infinite rooted $d$-ary forest, and $F \cup F'$ is their union, then $\varphi_{F \cup F'}$ is a common right multiple of $F$ and $F'$.
\end{proof}

The \emph{fundamental group} $\pi_1(\calN,C_1)$ is by definition the group of all maps $C_1 \leftarrow C_1$ that arise as products of morphisms of $\calN$ and their inverses. That $\calN$ is right-Ore means that every such morphism can be written as $fg^{-1}$ with $f,g \in \calN(C_1,-)$.

\begin{observation}
The fundamental group of $\calN$ at $C_1$ is the R\"over--Nekrashevych group: $V_d(G) = \pi_1(\calN,C_1)$.
\end{observation}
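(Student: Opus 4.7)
The plan is to build an explicit isomorphism $\Phi\colon V_d(G)\to\pi_1(\calN,C_1)$ by sending a class $[T_-,\sigma(g_1,\ldots,g_n),T_+]$ to the composition $\varphi_{T_-}\circ w\circ\varphi_{T_+}^{-1}$, where $w=\sigma(g_1,\ldots,g_n)\in\calW(C_n,C_n)$. The shape $fg^{-1}$ with $f,g\in\calN(C_1,-)$ puts this squarely in $\pi_1(\calN,C_1)$ by the right-Ore property of $\calN$ (Lemma~\ref{lem:ore_noetherian}), and a direct calculation checks that on boundaries this composition sends $v_i w$ to $u_{\sigma(i)}g_i(w)$, matching the definition of the almost-automorphism.

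The first step is to verify that $\Phi$ is well-defined. By Lemma~\ref{lem:iff_exp}, two triples represent the same almost-automorphism iff they share a common expansion, and by Lemma~\ref{lem:expand_one} every expansion is a sequence of one-leaf expansions. So it suffices to show that replacing a triple by a one-leaf expansion does not change $\varphi_{T_-}\circ w\circ\varphi_{T_+}^{-1}$. Concretely, if we expand at the $k$th leaf on the right, we are post-composing $\varphi_{T_+}^{-1}$ with the $\calF_d$-morphism $h\in\calN(C_n,C_{n+d-1})$ that subdivides the $k$th copy of $C$, and pre-composing $w$ (hence $\varphi_{T_-}\circ w$) with $h$. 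The identity $\varphi_{T_-}ww^{-1}hwh=\varphi_{T_-}h$-style rewriting is really just the observation that wreath recursion is exactly the interaction between $w$ and a single-caret subdivision, i.e.\ the argument given in Lemma~\ref{lem:expand_one}; spelling this out is the one piece that requires care, and I view it as the main obstacle.

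Next I will verify that $\Phi$ is a homomorphism. Given two elements, pick representatives with matching inner trees (which exist by taking a common expansion via Lemma~\ref{lem:iff_exp}) so that the first factor ends with $\varphi_{T}^{-1}$ and the second begins with $\varphi_{T}$; then $\varphi_{T}^{-1}\varphi_{T}$ cancels and we are left with the product formula for almost-automorphisms. Well-definedness from the previous step guarantees the answer does not depend on how we aligned the representatives.

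Finally, bijectivity follows from Lemma~\ref{lem:decomposition}. For surjectivity: every element of $\pi_1(\calN,C_1)$ has the form $f_1 w_1 (f_2 w_2)^{-1}=f_1 (w_1 w_2^{-1}) f_2^{-1}$ with $f_i\in\calF_d(C_1,-)$ and $w_i\in\calW$; since $f_i$ corresponds to a finite complete rooted subtree of $\tree_d$, this is exactly the image of an almost-automorphism under $\Phi$. For injectivity: if $\varphi_{T_-}w\varphi_{T_+}^{-1}=\id_{C_1}$ then $\varphi_{T_-}w=\varphi_{T_+}$, and the uniqueness clause of Lemma~\ref{lem:decomposition} forces $T_-=T_+$ and $w=\id$, so the original triple already represents the identity of $V_d(G)$.
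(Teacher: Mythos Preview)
Your proof is correct, but it is more elaborate than necessary, and the paper's argument reveals why. Both $V_d(G)$ and $\pi_1(\calN,C_1)$ are, by definition, groups of self-homeomorphisms of the same space $C_1=X^\omega$. You observe this yourself when you note that $\varphi_{T_-}\circ w\circ\varphi_{T_+}^{-1}$ ``sends $v_i w$ to $u_{\sigma(i)}g_i(w)$, matching the definition of the almost-automorphism.'' That sentence says precisely that your map $\Phi$ is the identity on the level of homeomorphisms. Once you have that, well-definedness, the homomorphism property, and injectivity are all automatic, and the only content left is the surjectivity step---which is exactly what the paper proves.

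Concretely, the paper just says: by Lemmas~\ref{lem:decomposition} and~\ref{lem:ore_noetherian}, every element of $\pi_1(\calN,C_1)$ is $(\varphi_{T_-}w_-)(\varphi_{T_+}w_+)^{-1}=\varphi_{T_-}(w_-w_+^{-1})\varphi_{T_+}^{-1}$, which as a homeomorphism is visibly $[T_-,\sigma(g_1,\ldots,g_n),T_+]$ where $\sigma(g_1,\ldots,g_n)=w_-w_+^{-1}$; and conversely every such almost-automorphism arises this way. That is a proof of equality of two subgroups of $\operatorname{Homeo}(C_1)$, not the construction of an abstract isomorphism. Your careful verification of invariance under one-leaf expansions and of the homomorphism property via aligned representatives is correct but redundant once you recognise that $\Phi=\id$.
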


\begin{proof}
By Lemmas~\ref{lem:decomposition} and~\ref{lem:ore_noetherian} every element of $\pi_1(\calN,C_1)$ can be written as
\[
(\varphi_{T_-}w_-)(\varphi_{T_+}w_+)^{-1} = \varphi_{T_-}w_-w_+^{-1}\varphi_{T_+}^{-1}
\]
where $T_-$ and $T_+$ are finite rooted complete $d$-ary trees with the same number of leaves, say $n$, and $w_-,w_+ \in S_n \wr G$. Writing $w_-w_+^{-1} = \sigma (g_1,\ldots,g_n)$ with $\sigma \in S_n$ and $(g_1,\ldots,g_n)\in G^n$ it is clear that the original homeomorphism is $[T_-,\sigma(g_1,\ldots,g_n),T_+]$ as described in Section~\ref{sec:nekr}. For the converse, it is clear that every homeomorphism in $V_d(G)$ arises this way.
\end{proof}

Note that every morphism in $\calW$ is invertible, whereas in $\calF_d$ only the identity morphisms are invertible. Hence a product $fw \in \calN(C_k,C_\ell)$ decomposed as in Lemma~\ref{lem:decomposition} is invertible if and only if $f$ is the identity if and only if $k = \ell$. In particular, $\calN^\times(C_k,C_k) = \calW(C_k,C_k)$. In the language of \cite{witzel17} this can be formulated as:

\begin{observation}
The map $\delta \colon \Ob(\calN) \to \N$, $C_k \mapsto k$ is a height function. In particular, $\calN$ is strongly Noetherian.
\end{observation}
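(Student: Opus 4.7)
The plan is to read off both assertions directly from the decomposition result Lemma~\ref{lem:decomposition} together with the obvious fact that forest expansions strictly increase the number of leaves.

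First I would dispose of the finiteness-of-fibres condition: the preimage $\{C_k \in \Ob(\calN) \mid \delta(C_k)\le n\} = \{C_1,\dots,C_n\}$ is visibly finite, so the last hypothesis of the height-function definition is trivial.

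The substantive point is the monotonicity of $\delta$ along non-invertible morphisms. Given any morphism $\varphi \in \calN(C_k,C_\ell)$, Lemma~\ref{lem:decomposition} writes it uniquely as $\varphi = fw$ with $f = \varphi_F \in \calF_d(C_k,C_\ell)$ and $w \in \calW(C_\ell,C_\ell)$. Since $w$ is invertible in $\calN$, the morphism $\varphi$ is invertible if and only if $f$ is invertible, which by the description of $\calF_d$ happens exactly when $F$ has one leaf per root, i.e.\ $F$ is a disjoint union of trivial trees; equivalently, when $k=\ell$. Conversely, if $\varphi$ is non-invertible then $F$ contains at least one $d$-caret, so its leaf count strictly exceeds its root count, giving $\delta(C_\ell) = \ell > k = \delta(C_k)$. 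This is precisely the monotonicity needed for $\delta$ to be a height function in the sense of \cite{witzel17}, with $\delta$ of the source of a non-invertible morphism exceeding $\delta$ of the target.

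Finally, strongly Noetherian follows formally. A category with a height function is strongly Noetherian because any strictly descending chain of proper left factors of a fixed morphism would induce a strictly descending chain in $\N$ via $\delta$ applied to their sources (or equivalently an ascending chain bounded by the height of the original source), which is impossible.

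I do not anticipate a main obstacle: once Lemma~\ref{lem:decomposition} is in hand, both claims are essentially bookkeeping about leaf counts of finite rooted $d$-ary forests, and the only thing to be careful about is confirming that the invertible morphisms in $\calN$ are exactly those in $\calW$ — which is already noted in the text immediately preceding the statement.
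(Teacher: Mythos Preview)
Your proposal is correct and matches the paper's approach: the paper does not give a separate proof of this observation but rather deduces it from the paragraph immediately preceding it, which establishes (via Lemma~\ref{lem:decomposition}) that a morphism in $\calN(C_k,C_\ell)$ is invertible if and only if $k=\ell$. Your write-up simply spells out that reasoning in full, including the formal deduction of strong Noetherianity from the existence of a height function.
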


The last ingredient to Theorem~\ref{thm:ore_criterion} is a Garside family. We need some more definitions to explain it. If a morphism decomposes as $f = abc$ then $a$ is called a \emph{left-factor} and $b$ is called a \emph{factor} of $f$. Let $\calE \subseteq \calN$ be a family of morphisms that contains all invertible morphisms and is closed under factors, i.e.\ if $f \in \calE$ then so is every factor of $f$. Assume that every morphism in $\calN$ can be written as a product of morphisms in $\calE$. Then $\calE$ is \emph{left-Garside} if every morphism $f \in \calN$ admits a greatest left-factor $\head(f) \in \calE$ in the following sense: $\head(f)$ is a left-factor of $f$ and if $s \in \calE$ is a left-factor of $f$ then it is a left-factor of $\head(f)$. We take $\calE$ to consist of morphisms $\varphi_Fw$ where $w \in \calW$ and $F$ is a rooted $d$-ary forest in which every tree is either trivial or a single $d$-caret.

\begin{lemma}
\label{lem:garside}
The family $\calE$ is a left-Garside family in $\calN$. It is closed under taking factors and is locally finite in the sense that up to right multiplication by invertible morphisms the set $\calE(C_k,-)$ is finite for every $k$.
\end{lemma}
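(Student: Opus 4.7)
The plan is to exploit the unique normal form from Lemma~\ref{lem:decomposition}, writing every morphism as $\varphi_F w$ with $F$ a finite rooted $d$-ary forest and $w$ a wreath element, and reducing all claims about $\calE$ to combinatorial statements about the underlying $F$. The invertibles in $\calN$ are exactly $\calW$ (trivial forest, by Lemma~\ref{lem:decomposition}), which already lie in $\calE$, and any $\varphi_F w$ is a product of elements of $\calE$ because $F$ can be built one $d$-caret at a time.

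The core of the proof is producing $\head(f)$. For $f = \varphi_F w$, I would set $\head(f) \defeq \varphi_{F_{\mathrm{head}}}$, where $F_{\mathrm{head}}$ places a single $d$-caret at each root of $F$ that is not already a leaf of $F$, and a trivial tree at the remaining roots. This lies in $\calE$ and is a left-factor of $f$, since $F$ factors as $F_{\mathrm{head}}$ extended by a unique forest $E$ grafted at the leaves. To show it is the greatest such, I would take any $s = \varphi_{F_0} w_0 \in \calE$ dividing $f$ on the left, decompose the cofactor as $\varphi_{F_g} w_g$, and use the commutation recipe from the proof of Lemma~\ref{lem:decomposition} to rewrite $w_0 \varphi_{F_g} = \varphi_{\widetilde{F}_g}\widetilde{w}_0$. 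The uniqueness part of Lemma~\ref{lem:decomposition} then forces $F_0 \cdot \widetilde{F}_g = F$; since each tree of $F_0$ has depth at most $1$, the carets of $F_0$ must appear among the top-level carets of $F$, hence $F_0$ can be extended to $F_{\mathrm{head}}$ by grafting further carets or trivial trees at its leaves. This exhibits $s$ as a left-factor of $\varphi_{F_{\mathrm{head}}}$, as required.

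The remaining items are routine. For closure under factors, if $f = abc$ with the forest of $f$ of depth at most $1$, then decomposing $a,b,c$ via Lemma~\ref{lem:decomposition} and commuting wreath parts to the right yields $F = F_a \cdot F'_b \cdot F''_c$; a grafted composition of depth at most $1$ forces each factor to have depth at most $1$, so $b \in \calE$. For local finiteness, modulo right-multiplication by the invertible $\calW$, an element of $\calE(C_k,-)$ is determined by the subset of $\{1,\dots,k\}$ whose roots carry a caret, giving exactly $2^k$ classes. The main obstacle is the commutation step in verifying $\head(f)$, namely correctly tracking how the wreath part $w_0$ permutes and rewrites trees when pushed past $\varphi_{F_g}$; once this is handled via Lemma~\ref{lem:decomposition}, the Garside conclusion follows cleanly.
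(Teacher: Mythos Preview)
Your proposal is correct and follows essentially the same route as the paper: define $\head(\varphi_F w)$ to be $\varphi_{F_{\mathrm{head}}}$ with a caret at each non-trivial root of $F$, and count $2^k$ classes for local finiteness. The only difference is cosmetic: the paper observes upfront that right-multiplication by invertibles does not affect left-divisibility, so it suffices to verify the Garside property inside $\calF_d$ alone, avoiding the explicit commutation $w_0\varphi_{F_g}=\varphi_{\widetilde F_g}\widetilde w_0$ that you carry through; conversely, you explicitly verify closure under factors, which the paper asserts but does not spell out.
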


\begin{proof}
That every morphism in $\calN$ can be written as a product of morphisms in $\calE$ follows from the fact that every $d$-ary forest can be built out of $d$-carets.
We need to check that every morphism in $\calN$ has a greatest left-factor in $\calE$. Since Lemma~\ref{lem:decomposition} says that up to right multiplication by an invertible morphism every morphism of $\calN$ lies in $\calF_d$, it suffices to check that $\calE$ restricts to a left-Garside family in $\calF_d$. Let $\varphi_F \in \calF_d(C_r,C_n)$. We claim that $\head(\varphi_F) = \varphi_{F'}$ where $F'$ is described as follows: (a) if the $i$th tree of $F$ is trivial then so is the $i$th tree of $F'$; (b) if the $i$th tree of $F$ is non-trivial then the $i$th tree of $F'$ is a single $d$-caret. We see that every left-factor of $\varphi_F$ needs to satisfy (a) and in order to be an element of $\calE$ the remaining trees cannot be more than single $d$-carets, showing that $\varphi_{F'}$ is as desired.

For local finiteness, by Lemma~\ref{lem:decomposition} every element of $\calE(C_k,-)$ is, up to right multiplication by an invertible morphism, one of the $2^k$ elements in $(\calE \cap \calF_d)(C_k,-)$.
\end{proof}

In order to apply Theorem~\ref{thm:ore_criterion} we need to check condition (\textsc{lk}), namely that for large enough $k$ the realization of a certain poset $E(C_k)$ is arbitrarily highly connected. The poset $E(C_k)$ is described as follows (see the paragraph before Theorem~3.12 in \cite{witzel17}). Its elements are equivalence classes of morphisms $h \in \calE(C_m,C_k)$ with $m < k$, where the equivalence relation is given by left multiplication by a morphism in $\calN^\times(C_m,C_m) = \calW(C_m,C_m)$. The order on equivalence classes is given by $[h_1] \le [h_2]$ if $h_2 = h'h_1$ for some $h' \in \calE$. Rather than directly dealing with the realization $\abs{E(C_k)}$ we will later see that $E(C_k)$ is the face poset of a simplicial complex $X_k$ described as follows: the vertices of $X_k$ are equivalence classes of morphisms $h \in \calE(C_{k-(d-1)},C_k)$ up to left multiplication by a morphism in $\calN^\times(C_{k-(d-1)},C_{k-(d-1)})$. Vertices $[h_1],\ldots,[h_n]$ form a simplex if there is a morphism $h \in \calE(C_{k-n(d-1)},C_k)$ that has each of them as a right factor.

Let $h = \varphi_{F}\sigma(g_1,\ldots,g_k) \in \calE(C_{k-(d-1)},C_k)$ be a representative of a vertex of $X_k$, so $F$ has a single non-trivial tree, $\sigma \in S_k$ and $(g_1,\ldots,g_k) \in G^k$. Note that if the index of the non-trivial tree in $F$ is $i$ then $\varphi_{F}(\{i\} \times C) = \{i,\ldots,i+d-1\} \times C$. We call the \emph{support} of $h$ the set $\{\sigma^{-1}(i), \ldots, \sigma^{-1}(i+d-1)\}$.

\begin{lemma}
The support is invariant under left multiplication by an invertible morphism.
\end{lemma}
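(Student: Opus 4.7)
The strategy is to give an intrinsic geometric description of the support of $h$ in terms of how it acts on cones, and to observe that left multiplication by an invertible morphism preserves this description.

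First I would unpack what invertible morphisms in $\calN^\times(C_{k-(d-1)},C_{k-(d-1)})$ look like. Since $f w$ in the canonical form of Lemma~\ref{lem:decomposition} is invertible exactly when $f$ is an identity (as noted in the preceding paragraphs), we have $\calN^\times(C_{k-(d-1)},C_{k-(d-1)}) = \calW(C_{k-(d-1)},C_{k-(d-1)}) = S_{k-(d-1)} \wr G$. An element $w' = \pi(h_1,\ldots,h_{k-(d-1)})$ sends each cone $\{j'\} \times C \subset C_{k-(d-1)}$ homeomorphically onto the full cone $\{\pi(j')\} \times C$, and consequently carries proper sub-cones to proper sub-cones.

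Next I would give the intrinsic characterization: $j$ lies in the support of $h$ if and only if $h(\{j\} \times C)$ is a proper sub-cone of some $\{j'\} \times C \subset C_{k-(d-1)}$ (as opposed to being an entire cone). To verify this from the formula $h = \varphi_F \sigma(g_1,\ldots,g_k)$, I would trace a source cone $\{j\} \times C$ through $\sigma(g_1,\ldots,g_k)$, which sends it to the full cone $\{\sigma(j)\} \times C$, and then through $\varphi_F$: if $\sigma(j)$ labels a leaf of a trivial tree of $F$, the image is a full cone in $C_{k-(d-1)}$, while if $\sigma(j) \in \{i,\ldots,i+d-1\}$ (a leaf of the unique non-trivial $d$-caret at position $i$), the image is a proper sub-cone $\{i\} \times \{x_{\sigma(j)-i+1}\} \times C$. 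So the intrinsic set is exactly $\{\sigma^{-1}(i),\ldots,\sigma^{-1}(i+d-1)\}$, matching the definition.

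Finally, since $w'$ carries full cones to full cones and proper sub-cones to proper sub-cones of $C_{k-(d-1)}$, the composition $w'h$ has a proper sub-cone image on $\{j\} \times C$ if and only if $h$ does. After rewriting $w'h$ in canonical form via Lemma~\ref{lem:decomposition} (so that the definition of support applies to it directly), applying the intrinsic description to $w'h$ recovers the same subset of $\{1,\ldots,k\}$. The only step requiring care is the intrinsic characterization, but it is really just bookkeeping on which leaves of $F$ belong to the one non-trivial tree; nothing here poses a genuine obstacle.
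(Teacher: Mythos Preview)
Your argument is correct. The approach, however, differs from the paper's. The paper proceeds by direct computation: it splits an invertible morphism $w' \in S_{k-(d-1)} \wr G$ into its $G^{k-(d-1)}$-part and its $S_{k-(d-1)}$-part and checks each separately. The $G$-part is immediate (it does not affect $F$ or $\sigma$), and for $\tau \in S_{k-(d-1)}$ one explicitly rewrites $\tau \varphi_F = \varphi_{F'} \tau'$, reads off that the non-trivial tree of $F'$ sits at position $\tau(i)$ and that $\tau'$ carries $\{i,\ldots,i+d-1\}$ to $\{\tau(i),\ldots,\tau(i)+d-1\}$, whence the support $\{\sigma^{-1}(i),\ldots,\sigma^{-1}(i+d-1)\}$ is unchanged.

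Your route is more intrinsic: you characterize the support as exactly those source indices $j$ for which $h(\{j\}\times C)$ is a \emph{proper} sub-cone of $C_{k-(d-1)}$, and then observe that elements of $\calW(C_{k-(d-1)},C_{k-(d-1)})$ preserve the full-cone/proper-sub-cone dichotomy. This avoids any explicit rewriting and makes the invariance transparent. The trade-off is that the paper's computation is essentially a one-liner once one knows how $S_{k-(d-1)}$ interacts with $\varphi_F$ (and this interaction is in any case needed elsewhere, e.g.\ in Lemma~\ref{lem:decomposition}), whereas your argument requires formulating and verifying the intrinsic description --- easy, but an extra step. Either way, the lemma is elementary.
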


\begin{proof}
It is clear that left multiplying by an element of $G^{k-(d-1)}$ does not change the support. Let $\varphi_F$ be as above and let $\tau \in S_{k-(d-1)}$. Then $\tau \varphi_F = \varphi_{F'} \tau'$ where $\tau'$ takes $\{i,\ldots,i+d-1\}$ to $\{\tau(i),\ldots,\tau(i)+d-1\}$ and the non-trivial tree of $F'$ has index $\tau(i)$. The claim follows.
\end{proof}

\begin{lemma}\label{lem:disjoint_support}
Vertices $[h_1], \ldots,[h_n]$ form a simplex in $X_k$ if and only if they have disjoint support. In this case the morphism $h \in \calE(C_{k-n(d-1)},C_k)$ having each $h_i$ as a right factor is unique up to left multiplication by an invertible morphism.
\end{lemma}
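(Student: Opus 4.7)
The plan is to analyze the fiber structure of morphisms in $\calE$. Any $h \in \calE(C_{k-m(d-1)}, C_k)$ decomposes as $\varphi_F \sigma(g_1, \ldots, g_k)$ (uniquely by Lemma~\ref{lem:decomposition}) where $F$ has exactly $m$ nontrivial trees, each a single $d$-caret. As a map $C_k \to C_{k-m(d-1)}$, $h$ identifies, for each caret, the $d$ components of $C_k$ whose indices lie in the $\sigma^{-1}$-preimage of that caret's leaves, producing $m$ pairwise disjoint $d$-subsets of $\{1,\ldots,k\}$; I will call these the \emph{collapsing sets} of $h$. For a single-caret morphism, the unique collapsing set is the support.

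For the forward direction, suppose $h \in \calE(C_{k-n(d-1)},C_k)$ has each $h_i$ as a right factor, with $h = h'_i h_i$ and $h'_i \in \calE(C_{k-n(d-1)},C_{k-(d-1)})$ having $n-1$ carets. I would argue that the $n$ collapsing sets of $h$ are exactly $S_i$ together with the $h_i$-preimages of the $n-1$ collapsing sets of $h'_i$: the key point is that if a collapsing set of $h'_i$ contained the collapsed image of $S_i$ in $\{1,\ldots,k-(d-1)\}$, its preimage through $h_i$ would yield an $h$-fiber of size $2d-1$, contradicting the fact that $h \in \calE$ has fibers of size $1$ or $d$ only. Hence each $S_i$ is one of $h$'s $n$ (pairwise disjoint) collapsing sets. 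To promote this to pairwise disjointness of the $S_i$, I would show that $S_i = S_j$ forces $[h_i] = [h_j]$: comparing $h'_i h_i = h'_j h_j$ on $S_i \times C$ and unwinding the wreath recursions forces $h_i h_j^{-1}$ (in the groupoid completion of $\calN$) to be given by an element of $\calW(C_{k-(d-1)},C_{k-(d-1)})$ coming from $(\tilde g^{(j)})^{-1}\tilde g^{(i)} \in G$, where $\tilde g^{(i)}, \tilde g^{(j)} \in G$ are the $G$-twists of $h'_i, h'_j$ on the relevant target component.

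For the converse, given pairwise disjoint $S_1,\ldots,S_n$, I would construct $h$ explicitly: partition $\{1,\ldots,k\}$ into $S_1,\ldots,S_n$ and the remaining singletons, label the $k-n(d-1)$ blocks, place $d$-carets in a forest $F$ at the roots indexing the blocks $S_\ell$, and choose $\sigma \in S_k$ and $g_j \in G$ so that the restriction of $h = \varphi_F\sigma(g_1,\ldots,g_k)$ to $S_\ell \times C$ agrees with $h_\ell$. Directly, or via iterated applications of Lemma~\ref{lem:expand_one}, one verifies that $h = h'_\ell h_\ell$ for each $\ell$, with $h'_\ell$ obtained from $F$ by deleting the $\ell$-th caret and re-indexing.

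Uniqueness is then immediate from the fiber analysis: any two common right multiples $h, \tilde h$ have the same collapsing sets $\{S_1,\ldots,S_n\}$ and the same collapsing data (dictated by the $h_i$), so they can differ only by a permutation of the $k-n(d-1)$ target components together with a $G$-twist on each component — that is, by an element of $\calW(C_{k-n(d-1)},C_{k-n(d-1)}) = S_{k-n(d-1)}\wr G$. I expect the main technical hurdle to be the $(2d-1)$-fiber pullback step together with the bookkeeping in the ``$S_i = S_j \Rightarrow [h_i]=[h_j]$'' sub-claim, where one must verify that the constraints coming from being a right factor of the same $h$ really do land the compensating wreath element inside $G$ rather than merely inside $S_d \wr G$.
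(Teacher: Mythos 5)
Your proof is correct and follows essentially the same route as the paper's: the supports are the size-$d$ fibers of the common right multiple $h$, the converse is an explicit construction placing a $d$-caret at each support and matching the wreath data, and uniqueness holds because the remaining freedom is exactly left multiplication by an element of $\calW(C_{k-n(d-1)},C_{k-n(d-1)})$. The paper dismisses the forward direction as ``clear''; your $(2d-1)$-fiber contradiction and the sub-claim that equal supports force equal vertices spell out precisely what is being left implicit there, and both steps check out.
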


\begin{proof}
That vertices forming a simplex have disjoint support is clear. Conversely, let $h_i =\varphi_{F_i}\sigma_i \vec{g}_i \in \calE(C_{k-(d-1)},C_k)$. Left multiplying by a suitable element of $S_{k-(d-1)}$, for each $i$ we can arrange the non-trivial tree of $F_i$ to be the $i$th, and moreover we can alter $h_i$ by arbitrary elements of $G$ outside its support. Thus we construct the morphism $h$ as follows: we define $\vec{g}$ to coincide with $\vec{g}_i$ on the support of $h_i$ and to be arbitrary everywhere else; we define $\sigma$ to take the support of $h_i$ to $\{i,\ldots,i+d-1\}$, to act on it as $\sigma_i$, and to be arbitrary everywhere else; we define $F$ to be the forest whose $i$th tree is the same as that of $F_i$ and which has all but the first $n$ trees trivial. Then the morphism $h = \varphi_F\sigma \vec{g}$ has all the $h_i$ as right factors. The remaining choices concern how $S_k \wr G$ acts on $\{nd+1,\ldots,k\} \times C$, and they are reflected as left multiplication by an element of $\calW(C_{k-n(d-1)},C_{k-n(d-1)})$ fixing $\{1,\ldots,n\} \times C$.
\end{proof}

\begin{corollary}\label{cor:unsubdivision}
The poset $E(C_k)$ is isomorphic to the face poset of $X_k$. Thus $\abs{E(C_k)}$ is isomorphic to the barycentric subdivision of $X_k$, and in particular they are homeomorphic.
\end{corollary}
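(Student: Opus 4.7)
The plan is to exhibit an order-preserving bijection $\Phi$ between $E(C_k)$ and the face poset of $X_k$, at which point the homeomorphism claim is a standard fact about order complexes. Given $[h] \in E(C_k)$ with representative $h = \varphi_F w \in \calE(C_m,C_k)$ decomposed as in Lemma~\ref{lem:decomposition}, the forest $F$ has some number $n \ge 1$ of non-trivial trees (each a single $d$-caret) among its $m$ trees, with $m = k - n(d-1)$ (and $n \ge 1$ because $m < k$). For each non-trivial tree of $F$, indexed by $j$, I isolate a right factor $h_j \in \calE(C_{k-(d-1)},C_k)$ of $h$ by keeping only that $d$-caret in $F$ and trivializing all other trees, with an appropriate modification of the wreath element $w$; by construction $h$ factors as $h = h'' h_j$ for some $h'' \in \calE$, so $[h_j]$ is a vertex of $X_k$ which is a right factor of $[h]$. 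The supports of $h_1,\ldots,h_n$ are pairwise disjoint (they correspond to leaves of distinct trees of $F$), so by Lemma~\ref{lem:disjoint_support} the classes $[h_1],\ldots,[h_n]$ span an $(n-1)$-simplex of $X_k$, and I define $\Phi([h]) \defeq \{[h_1],\ldots,[h_n]\}$.

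Next I would verify that $\Phi$ is a well-defined bijection. Well-definedness reduces to the fact that the simplex $\{[h_1],\ldots,[h_n]\}$ is insensitive to the choice of representative $h$: left multiplication by an element of $\calW(C_m,C_m) = S_m \wr G^m$ permutes the non-trivial trees of $F$ and alters each $w_j$ only up to left multiplication by an invertible morphism, and by the support-invariance lemma preceding Lemma~\ref{lem:disjoint_support} this leaves the underlying vertex classes unchanged. Bijectivity is then immediate from the uniqueness clause in Lemma~\ref{lem:disjoint_support}: a simplex with vertices $\{[h_1],\ldots,[h_n]\}$ determines a class of morphisms having each $h_i$ as a right factor, and this class lies in $E(C_k)$ because $k - n(d-1) < k$.

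Finally, for order-preservation, if $[h_1] \le [h_2]$ in $E(C_k)$, meaning $h_2 = h' h_1$ for some $h' \in \calE$, then every single-caret right factor of $h_1$ remains a right factor of $h_2$, which yields $\Phi([h_1]) \subseteq \Phi([h_2])$; conversely, a face inclusion $\Phi([h_1]) \subseteq \Phi([h_2])$ can be promoted to a factorization $h_2 = h' h_1$ by assembling the carets of $h_2$'s forest that do not appear in $h_1$'s into the forest of $h'$. The main subtlety is precisely the coherence of the single-caret isolation under the ambient equivalence relations on morphisms, but this is handled by the invariance of support and the uniqueness in Lemma~\ref{lem:disjoint_support}. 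Once the poset isomorphism $E(C_k) \cong$ (face poset of $X_k$) is established, the realization $\abs{E(C_k)}$, being the order complex of the face poset of a simplicial complex, is by definition its barycentric subdivision, and so is canonically homeomorphic to $X_k$ itself.
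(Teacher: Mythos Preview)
Your proposal is correct and follows essentially the same approach as the paper: you construct the map $\Phi$ by isolating the single-caret right factors of a representative $h = \varphi_F w$, which is exactly what the paper does (it sends $[h]$ to the simplex $\{[\varphi_{F_i}w]\}$ where $F_i$ retains only the $i$th non-trivial tree of $F$), and you appeal to the same uniqueness clause of Lemma~\ref{lem:disjoint_support} for injectivity. Your write-up is in fact somewhat more careful than the paper's, since you explicitly verify both directions of order-preservation rather than just asserting that $\Phi$ is a poset map.
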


\begin{proof}
Note that any morphism $h$ in $\calE(C_{k-n(d-1)},C_k)$ is of the form $\varphi_Fw$ where $F$ has $n$ non-trivial trees and $w\in \calW(C_k,C_k)$. Hence $h$ defines an $(n-1)$-simplex in $X_k$ consisting of equivalence classes of morphisms $\varphi_{F_i}w \in \calE(C_{k-(d-1)},C_k)$, $1\le i \le n$, where $F_i$ is such that the $i$th non-trivial tree of $F$ coincides with the tree of $F_i$ on the same root and all other trees of $F_i$ are trivial. The map that sends $[h]$ to that simplex is a poset map $E(C_k) \to X_k$ that is surjective by definition of $X_k$. It is injective by the uniqueness statement of Lemma~\ref{lem:disjoint_support}.
\end{proof}

\begin{corollary}\label{cor:flag}
The complex $X_k$ is flag.
\end{corollary}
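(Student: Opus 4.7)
The claim that $X_k$ is flag follows almost immediately from Lemma~\ref{lem:disjoint_support}, so my plan is simply to unpack the equivalence stated there and observe that pairwise disjointness of supports is the same as joint disjointness. The plan is to recall that a simplicial complex is flag if every finite set of pairwise adjacent vertices already spans a simplex, and then to show that this pairwise condition reduces to the disjoint-support criterion of Lemma~\ref{lem:disjoint_support}.

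First I would let $[h_1],\dots,[h_n]$ be vertices of $X_k$ that are pairwise joined by edges. By Lemma~\ref{lem:disjoint_support} applied to each pair $\{[h_i],[h_j]\}$, the supports of $h_i$ and $h_j$ are disjoint subsets of $\{1,\dots,k\}$. Since the support is a well-defined subset of $\{1,\dots,k\}$ (independent of the chosen representative, by the lemma preceding Lemma~\ref{lem:disjoint_support}), pairwise disjointness of a finite family of subsets of $\{1,\dots,k\}$ is equivalent to the family being jointly disjoint. Applying the ``if'' direction of Lemma~\ref{lem:disjoint_support} to the whole collection $[h_1],\dots,[h_n]$ then yields that they span an $(n-1)$-simplex in $X_k$.

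There is essentially no obstacle here: the work has already been done in Lemma~\ref{lem:disjoint_support}, which gives a combinatorial characterization of simplices purely in terms of pairwise information (disjointness of supports). The proof should fit in one short paragraph. The only thing to double-check is that the notion of ``support'' used in Lemma~\ref{lem:disjoint_support} really is a property of the equivalence class $[h_i]$ and not of the chosen representative, but this is exactly the content of the lemma immediately preceding it.
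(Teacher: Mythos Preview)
Your proposal is correct and follows essentially the same approach as the paper's proof: both invoke Lemma~\ref{lem:disjoint_support} to translate pairwise adjacency into pairwise disjointness of supports, and then apply the existence direction of that lemma to obtain the simplex. Your version simply makes explicit the (trivial) observation that pairwise disjointness of supports is the same as joint disjointness, which the paper leaves implicit.
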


\begin{proof}
We need to show that vertices that can be pairwise joined by edges form a simplex. Such vertices have pairwise disjoint support, so the claim follows from the existence statement of Lemma~\ref{lem:disjoint_support}.
\end{proof}

To verify (\textsc{lk}) we use a result due to Belk and Forrest \cite{belk15}. A simplex $\sigma$ in a simplicial flag complex is called a \emph{$k$-ground}, for $k \in \N$, if every vertex of $X$ is adjacent to all but at most $k$ vertices of $\sigma$. The complex is said to be \emph{$(n,k)$-grounded} if there is an $n$-simplex that is a $k$-ground.

\begin{theorem}[{\cite[Theorem~4.9]{belk15}}]\label{thm:belk_forrest}
For $m,k \in \N$ every $(mk,k)$-grounded flag complex is $(m-1)$-connected.
\end{theorem}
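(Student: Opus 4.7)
The plan is to construct a contractible subcomplex $Y\subseteq X$ containing the $(m-1)$-skeleton of $X$; $(m-1)$-connectivity of $X$ will then follow by cellular approximation. Let $\sigma=\{v_0,\dots,v_{mk}\}$ be an $mk$-simplex that is a $k$-ground, and for each $v\in\sigma$ let $U_v$ be the full subcomplex of $X$ on the vertex set consisting of $v$ together with every vertex adjacent to $v$. Set $Y:=\bigcup_{v\in\sigma}U_v$.

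Since $X$ is flag, $v$ forms an edge with every other vertex of $U_v$, making $U_v$ a simplicial cone with apex $v$ and in particular contractible. More generally, for any non-empty $S\subseteq\sigma$ the intersection $\bigcap_{v\in S}U_v$ is the full subcomplex on those vertices $w$ that, for every $v\in S$, satisfy $w=v$ or $w$ is adjacent to $v$. Fixing any $v_0\in S$, every vertex of this intersection either equals or is adjacent to $v_0$, so flagness again presents the intersection as a cone on $v_0$, hence contractible. Each such intersection also contains $\sigma$ itself and is therefore non-empty, so the nerve of the cover $\{U_v\}_{v\in\sigma}$ is the full simplex $\Delta^{mk}$. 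The nerve lemma for covers of a simplicial complex by full subcomplexes with contractible finite intersections (in Bj\"orner's formulation) then gives $Y\simeq\Delta^{mk}$, which is contractible.

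Next I would verify $X^{(m-1)}\subseteq Y$. Let $\tau$ be a simplex of $X$ with at most $m$ vertices. For each $w\in\tau$ the $k$-ground property rules out at most $k$ choices of $v\in\sigma$ for which ``$w=v$ or $w$ is adjacent to $v$'' fails: if $w\notin\sigma$ this is immediate, and if $w\in\sigma$ the only potentially missed vertex of $\sigma$ is $w$ itself, which the clause $w=v$ absorbs. Summing over the at most $m$ vertices of $\tau$, at most $km$ elements of $\sigma$ are ruled out; since $|\sigma|=mk+1>km$, some $v\in\sigma$ is equal or adjacent to every vertex of $\tau$. Flagness then makes $\tau\cup\{v\}$ a simplex of $X$, so $\tau\in U_v\subseteq Y$.

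Finally, for any $i\le m-1$ and any continuous $f\colon S^i\to X$, cellular approximation produces a homotopic $f'\colon S^i\to X^{(i)}\subseteq X^{(m-1)}\subseteq Y$; contractibility of $Y$ makes $f'$ null-homotopic in $Y$ and therefore in $X$, so $\pi_i(X)=0$. The main technical obstacle is selecting a version of the nerve lemma that applies to covers by closed (non-open) full subcomplexes; Bj\"orner's formulation handles this cleanly, after which the proof reduces to the pigeonhole estimate $km<mk+1=|\sigma|$ and the identification of each $U_v$ and each non-empty finite intersection as a simplicial cone.
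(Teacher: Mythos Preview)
The paper does not prove this theorem; it is quoted from \cite{belk15} as a black box, the only addition being the subsequent Remark explaining how to drop the hypotheses $m,k \ge 1$ and finiteness of the complex. So there is no proof in the paper itself to compare against.

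That said, your argument is correct and provides a self-contained proof. Each $U_v$ is the closed star of $v$ in a flag complex and hence a cone on $v$. For any nonempty $S\subseteq\sigma$ the intersection $\bigcap_{v\in S}U_v$ is a full subcomplex; it contains $\sigma$ (so is nonempty), and for any fixed $v_0\in S$ every other vertex of it lies in $N(v_0)$, so flagness again makes it a cone on $v_0$. Bj\"orner's nerve theorem for covers of a simplicial complex by subcomplexes with contractible nonempty intersections then yields $Y\simeq\Delta^{mk}$. The pigeonhole count $\lvert\tau\rvert\cdot k \le mk < mk+1 = \lvert\sigma\rvert$ places every simplex with at most $m$ vertices inside some $U_v$, giving $X^{(m-1)}\subseteq Y$, and cellular approximation finishes. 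Your caveat about needing a nerve theorem valid for closed (subcomplex) covers rather than open covers is well placed, and the version you cite does apply.

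For what it is worth, the proof in \cite{belk15} proceeds by a direct induction rather than via the nerve lemma: one builds the complex up from the ground simplex by adjoining vertices one at a time and controls connectivity of the link at each step using groundedness of a lower-dimensional face. Your approach trades that inductive bookkeeping for one invocation of the nerve theorem, which makes the structure of the argument more transparent at the cost of importing a standard but nontrivial tool.
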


\begin{remark}
The reference makes additional assumptions that are not necessary: the assumption that $m,k \ge 1$ can be removed by observing that a $(0,k)$-grounded complex is always non-empty and that a $(0,0)$-grounded complex is contractible. The assumption that the complex be finite can be removed using that in an arbitrary complex, any sphere is supported on a finite subcomplex.
\end{remark}

\begin{lemma}\label{lem:ground}
The complex $X_k$ is $(\floor{k/d}-1,d)$-grounded.
\end{lemma}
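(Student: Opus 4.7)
The plan is straightforward: realize a ground concretely by partitioning a block of coordinates into disjoint $d$-element supports, then bound the failure-of-adjacency of any vertex by a pigeonhole argument on the size of its support.

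More precisely, set $n \defeq \floor{k/d}$ and, for $1 \le i \le n$, let $S_i \defeq \{(i-1)d+1, \ldots, id\} \subseteq \{1,\ldots,k\}$. These are pairwise disjoint $d$-element subsets, so by Lemma~\ref{lem:disjoint_support} there exist vertices $[h_1],\ldots,[h_n]$ of $X_k$ with $\text{support}(h_i) = S_i$ that together span an $(n-1)$-simplex $\sigma$. This gives the simplex of the required dimension $\floor{k/d}-1$.

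Next I would verify that $\sigma$ is a $d$-ground. Let $[h]$ be any vertex of $X_k$, with support $T \subseteq \{1,\ldots,k\}$, so $\abs{T} = d$. By Lemma~\ref{lem:disjoint_support}, $[h]$ is adjacent to $[h_i]$ precisely when $T \cap S_i = \varnothing$. Since the sets $S_1,\ldots,S_n$ are pairwise disjoint and $\abs{T} = d$, the set $T$ meets at most $d$ of the $S_i$'s. Hence $[h]$ is non-adjacent to at most $d$ vertices of $\sigma$, as required by the definition of a $d$-ground.

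There is essentially no obstacle: the definition of $X_k$ via disjoint supports combined with the cardinality constraint $\abs{T}=d$ forces the bound immediately. The only care needed is making sure the chosen supports fit inside $\{1,\ldots,k\}$, which is ensured by taking exactly $n = \floor{k/d}$ of them.
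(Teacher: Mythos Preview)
Your proof is correct and follows essentially the same approach as the paper's: both exhibit a $(\floor{k/d}-1)$-simplex and then use the pigeonhole argument that a vertex's support has size $d$ and so can meet at most $d$ of the pairwise disjoint supports of the simplex. The only cosmetic difference is that the paper simply takes an arbitrary $h \in \calE(C_{k-\floor{k/d}(d-1)},C_k)$ to obtain the simplex, whereas you construct explicit supports $S_i$; the underlying argument is identical.
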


\begin{proof}
Let $h \in \calE(C_{k - \floor{k/d}(d-1)},C_k)$ be arbitrary. We claim that the $(\floor{k/d}-1)$-simplex $[h]$ is a $d$-ground. Indeed, if $v \in \calE(C_{k-(d-1)},C_k)$ represents a vertex then its support has cardinality $d$, so its support can be non-disjoint from the support of at most $d$ of the vertices of $[h]$ (which have pairwise disjoint supports). The result is now clear from Lemma~\ref{lem:disjoint_support}.
\end{proof}

\begin{corollary}\label{cor:connectivity}
The complex $X_k$ is $(\floor{\frac{k-d}{d^2}}-1)$-connected.
\end{corollary}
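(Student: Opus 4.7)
The plan is to combine the three ingredients assembled just above: $X_k$ is flag by Corollary~\ref{cor:flag}, it is $(\floor{k/d}-1,d)$-grounded by Lemma~\ref{lem:ground}, and Theorem~\ref{thm:belk_forrest} converts groundedness into connectivity. So the whole proof amounts to running the Belk--Forrest estimate with the optimal parameters and then tidying up the floor arithmetic.

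First I would observe the (trivial but needed) fact that any sub-simplex of a $d$-ground is itself a $d$-ground: if every vertex of $X_k$ fails to be adjacent to at most $d$ vertices of a simplex $\sigma$, then the analogous statement holds for any $\sigma' \subseteq \sigma$, since the set of non-neighbors in $\sigma'$ is contained in the set of non-neighbors in $\sigma$. Consequently, once Lemma~\ref{lem:ground} gives an $(\floor{k/d}-1)$-simplex that is a $d$-ground, the complex $X_k$ is automatically $(md,d)$-grounded for every integer $m$ with $md \le \floor{k/d}-1$. Taking $m_0 \defeq \floor{(\floor{k/d}-1)/d}$ to be the largest such integer and invoking Theorem~\ref{thm:belk_forrest} (with its second parameter equal to $d$, using that $X_k$ is flag by Corollary~\ref{cor:flag}) yields that $X_k$ is $(m_0-1)$-connected.

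It remains to identify $m_0$ with $\floor{(k-d)/d^2}$. Writing $k = qd+r$ with $0 \le r < d$ we have $\floor{k/d}=q$, so $m_0 = \floor{(q-1)/d}$. On the other hand,
\[
\frac{k-d}{d^2} = \frac{q-1}{d} + \frac{r}{d^2},
\]
and since $0 \le r/d^2 < 1/d$ this fractional perturbation never crosses an integer boundary (if $q-1 = q'd+r'$ with $0 \le r' < d$, then $r'd + r \le (d-1)d + (d-1) = d^2-1$), giving $\floor{(k-d)/d^2} = \floor{(q-1)/d} = m_0$. Substituting, $X_k$ is $(\floor{(k-d)/d^2}-1)$-connected, as claimed.

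There is no real obstacle here; the nontrivial work has already been done in Corollaries~\ref{cor:unsubdivision} and~\ref{cor:flag} and in Lemma~\ref{lem:ground}, and the only thing to watch is the floor identity, which is routine once one writes $k$ in its division-with-remainder form.
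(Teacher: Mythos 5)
Your proposal is correct and follows essentially the same approach as the paper: cite Corollary~\ref{cor:flag}, Lemma~\ref{lem:ground}, and Theorem~\ref{thm:belk_forrest}, then verify the floor identity. The only cosmetic differences are that you make explicit the (true but implicit in the paper) fact that a sub-simplex of a $d$-ground is again a $d$-ground, and you verify $\floor{(\floor{k/d}-1)/d}=\floor{(k-d)/d^2}$ by division-with-remainder rather than via the nested-floor identity $\floor{\floor{x}/d}=\floor{x/d}$ used in the paper.
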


\begin{proof}
This follows by combining Theorem~\ref{thm:belk_forrest} with Corollary~\ref{cor:flag} and Lemma~\ref{lem:ground}, and the calculation
\[
\dfloor{\frac{\floor{\frac{k}{d}}-1}{d}} = \dfloor{\frac{\floor{\frac{k}{d}-1}}{d}} = \dfloor{\frac{\frac{k}{d}-1}{d}} = \dfloor{\frac{k - d}{d^2}}\text{.}\qedhere
\]
\end{proof}

\begin{theorem}\label{thm:positive}
Let $G \le \Aut(\tree_d)$ be self-similar. If $G$ is of type $\F_{n-1}$ then so is $V_d(G)$.
\end{theorem}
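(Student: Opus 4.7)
The plan is to apply Theorem~\ref{thm:ore_criterion} to the category $\calN$ based at $C_1$, invoking the identification $V_d(G) = \pi_1(\calN, C_1)$ already recorded in this section. The infrastructure built above has been arranged to dispatch almost every hypothesis for free: $\calN$ is right-Ore by Lemma~\ref{lem:ore_noetherian}, the assignment $\delta(C_k) = k$ is a height function with finite sub-level sets, and Lemma~\ref{lem:garside} supplies a locally finite left-Garside family $\calE$ closed under taking factors. Thus only the two numerical conditions (\textsc{stab}) and (\textsc{lk}) remain to be checked, each with $n$ replaced by $n-1$.

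For (\textsc{stab}), the stabilizers to control are $\calN^\times(C_k, C_k) = \calW(C_k, C_k) = S_k \wr G$. I would argue that the finite direct power $G^k$ inherits type $\F_{n-1}$ from the hypothesis on $G$, and then $S_k \wr G = G^k \rtimes S_k$ inherits $\F_{n-1}$ from $G^k$, which sits inside as a finite-index normal subgroup. Both steps use only standard preservation of $\F_{n-1}$ under finite direct products and under commensurability.

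For (\textsc{lk}), I would pass from $\abs{E(C_k)}$ to the homeomorphic complex $X_k$ via Corollary~\ref{cor:unsubdivision} and then invoke Corollary~\ref{cor:connectivity}, which guarantees that $X_k$ is $(\floor{(k-d)/d^2} - 1)$-connected; this bound tends to infinity with $k$, so it exceeds $n-2$ for all sufficiently large $k$. With both (\textsc{stab}) and (\textsc{lk}) in place, Theorem~\ref{thm:ore_criterion} concludes that $V_d(G) = \pi_1(\calN, C_1)$ is of type $\F_{n-1}$. The real difficulty is entirely hidden in the preparatory material, especially the flagness and grounding lemmas feeding Corollary~\ref{cor:connectivity}; given that groundwork, there is no substantive obstacle left, and the self-similarity hypothesis on $G$ enters only implicitly through the fact that it is what makes $\calN$ well-defined as a category in the first place.
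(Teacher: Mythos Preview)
Your proposal is correct and follows exactly the paper's own proof: apply Theorem~\ref{thm:ore_criterion} to $\calN$ at $C_1$, with the running-text hypotheses handled by Lemmas~\ref{lem:ore_noetherian} and~\ref{lem:garside}, condition (\textsc{lk}) by Corollaries~\ref{cor:unsubdivision} and~\ref{cor:connectivity}, and (\textsc{stab}) by noting that $\calN^\times(C_k,C_k) \cong S_k \wr G$ is virtually $G^k$ and hence of type $\F_{n-1}$. Your write-up is slightly more explicit than the paper's about the finiteness-property inheritance for the wreath product, but the argument is otherwise identical.
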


\begin{proof}
We want to apply Theorem~\ref{thm:ore_criterion} with $\calC = \calN$, $* = C_1$, $\calE = \calE$ and $\delta \colon C_k \mapsto k$. The assumptions in the running text are verified in Lemma~\ref{lem:ore_noetherian} and Lemma~\ref{lem:garside}. Condition (\textsc{lk}) follows from Corollary~\ref{cor:unsubdivision} and Corollary~\ref{cor:connectivity}. The group $\calN^\times(C_k,C_k)$ is $G \wr S_k$, which is virtually isomorphic to $G^k$ and therefore of type $\F_n$ by assumption. Applying the theorem it follows that $V_d(G) = \pi_1(\calN,C_1)$ is of type $\F_n$.
\end{proof}

\section{Disproving type $\F_n$}\label{sec:negative}

In this section we show that any group $G$ with a faithful, self-similar finite-state action on $\tree_{d-1}$ admits an action with these same properties on $\tree_d$ such that if $G$ is not of type $\F_n$ then neither is $V_d(G)$. This will provide us with many examples of R\"over--Nekrashevych groups that are not of type $\F_n$. This is the most important part of the article for various reasons. One reason is that most generalized Thompson groups are of type $\F_\infty$, so the property that we are aiming for is simply not satisfied by them. Another reason is that even if a generalized Thompson group is not of type $\F_n$, there is no established way to prove this. Often, when a group is of type $\F_{n-1}$ but not of type $\F_n$ it has a natural action on an $n$-dimensional space that can be utilized, but no such action is available to us. In the few known examples of generalized Thompson groups that are not of type $\F_\infty$, individually tailored arguments have been used; see for example \cite[Section~8.2]{witzel14} and \cite{witzel16}. Here we employ a more robust strategy using quasi-retracts.

The following property is the key to our approach.

\begin{definition}[Persistent]
 Let $G\le \Aut(\tree_d)$ be self-similar. We say $g\in G$ is \emph{$i$-persistent} if, in the wreath recursion $g=\rho(g)(g_1,\dots,g_d)$ we have $g_i=g$. We call $G$ \emph{$i$-persistent} if every element of $G$ is $i$-persistent. We say $g\in G$ is \emph{persistent} if it is $i$-persistent for some $i$, and that $G$ is \emph{persistent} if there exists $i$ such that every $g\in G$ is $i$-persistent. We will also refer to a group $G$ as having a \emph{faithful ($i$-)persistent action} on $\tree_d$ if $G$ acts faithfully on $\tree_d$ and its resulting image in $\Aut(\tree_d)$ is (self-similar and) ($i$-)persistent.
\end{definition}

\begin{remark}
 One can view the property of being persistent as a strong negation of the property of being contracting. A self-similar group is \emph{contracting} if it admits a finite subset $S$ (with the smallest such subset called its \emph{nucleus}) such that for every element $g$ of the group there exists an $N$ such that every state of $g$ below the $N$th level lies in $S$. If the group is persistent then for every element $g$ and every level, at least one of the states of $g$ at that level is $g$ itself, and thus unless the group is finite it can have no such $S$.
\end{remark}

Note that up to isomorphism, $G$ being $i$-persistent and $j$-persistent are equivalent properties for any $1\le i,j\le d$. In particular if $G$ is persistent then up to isomorphism it is $d$-persistent.

\begin{lemma}[Creating persistence]\label{lem:persist}
 Let $d\ge 3$. Let $G$ be a group with a faithful self-similar action on $\tree_{d-1}$. Then $G$ admits a faithful, persistent self-similar action on $\tree_d$. If the action on $\tree_{d-1}$ is finite-state then so is the action on $\tree_d$. If the action on $\tree_{d-1}$ is coarsely diagonal then so is the action on $\tree_d$. Moreover, an $i$-persistent such action can be found for any $1\le i\le d$.
\end{lemma}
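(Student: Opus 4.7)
The plan is to construct the persistent action explicitly. Identify the alphabet for $\tree_d$ with $X = \{1, \dots, d\}$ and the alphabet for $\tree_{d-1}$ with $X' = \{1, \dots, d-1\} \subseteq X$, and for each $g \in G$ with wreath recursion $\rho(g)(g_1, \dots, g_{d-1})$ on $\tree_{d-1}$, let $\rho'(g) \in S_d$ denote the extension of $\rho(g)$ by $\rho'(g)(d) = d$. I will define a new automorphism $\tilde{g} \in \Aut(\tree_d)$ by the recursive wreath recursion
\[
\tilde{g} = \rho'(g)\bigl(\tilde{g_1}, \dots, \tilde{g_{d-1}}, \tilde{g}\bigr),
\]
in which the $d$-th slot is $\tilde{g}$ itself. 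This is not a direct definition, but it unfolds level by level: $\tilde{g}$ is the unique element of $\Aut(\tree_d) = \varprojlim \Aut(\tree_d^{(n)})$ satisfying the above at every level, obtained inductively by setting $\tilde{g}|_0 = \id$ and $\tilde{g}|_{n+1} = \rho'(g)(\tilde{g_1}|_n, \dots, \tilde{g_{d-1}}|_n, \tilde{g}|_n)$. Explicitly, $\tilde{g}$ fixes every occurrence of the letter $d$ and acts on the $i$-th letter $x_i$ of a word $x_1 \cdots x_n \in X^*$ (when $x_i \neq d$) by the permutation $\rho\bigl(g|_{\pi(x_1 \cdots x_{i-1})}\bigr)$, where $\pi \colon X^* \to {X'}^*$ deletes all occurrences of $d$.

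Set $\psi(g) \defeq \tilde{g}$. I would then verify in sequence: (i) $\psi$ is injective, because restricting $\tilde{g}$ to the subtree on vertex set ${X'}^* \subset X^*$ recovers the original faithful action of $g$ on $\tree_{d-1}$; (ii) $\psi$ is a group homomorphism, by induction on the level and using that $\rho'$ is a homomorphism into $S_d$; (iii) the image $\psi(G)$ is self-similar and $d$-persistent, because the level-$1$ states of $\tilde{g}$ are by construction $\tilde{g_1}, \dots, \tilde{g_{d-1}}, \tilde{g}$, all lying in $\psi(G)$, with $\tilde{g}$ occupying the $d$-th slot; (iv) the set of states of $\tilde{g}$ is exactly the $\psi$-image of the set of states of $g$, so finite-state actions map to finite-state actions; and (v) coarse diagonality is preserved, since every state of $\tilde{g}$ has the form $\tilde{h}$ for a state $h$ of $g$, and then $\tilde{h}^{-1}\tilde{g} = \psi(h^{-1}g)$ has the same finite order as $h^{-1}g$ by injectivity of $\psi$. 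To obtain $i$-persistence for an arbitrary $1 \le i \le d$, I simply permute the alphabet $X$ by the transposition $(i \, d)$ throughout the construction.

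The main obstacle is step (ii), the homomorphism check: the naive attempt of comparing wreath recursions of $\psi(gh)$ and $\psi(g)\psi(h)$ is circular, because the $d$-th slot of both expressions contains $\psi(gh)$ respectively $\psi(g)\psi(h)$, which is precisely the equality to be proven. I would resolve this by induction on the level. Truncating at level $n+1$, the $d$-th slot reduces to a level-$n$ statement that is exactly the inductive hypothesis; for each remaining slot $i < d$, the permutation $\rho'(h)(i) = \rho(h)(i)$ stays within $\{1, \dots, d-1\}$, so the identity at that slot reduces to the product wreath-recursion identity $(gh)_i = g_{\rho(h)(i)} h_i$ that holds in the original action on $\tree_{d-1}$. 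Equivalently, one can give a clean non-inductive argument using the explicit formula above together with the observation $\pi(\tilde{h}(v)) = h(\pi(v))$ (which holds since $\tilde{h}$ fixes the letter $d$): this identity ensures that the state of $\tilde{g}$ traversed by $\psi(g)\psi(h)$ at position $i$ of a word $v$ is $g|_{h(\pi(v_{<i}))} \cdot h|_{\pi(v_{<i})} = (gh)|_{\pi(v_{<i})}$, matching the state traversed by $\psi(gh)$.
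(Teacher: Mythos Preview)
Your construction is exactly the one the paper uses (extend each wreath recursion by putting $g$ itself in the $d$-th slot and $\iota\circ\rho(g)$ as the permutation), and your verification of faithfulness via the embedded copy of $\tree_{d-1}$ and of $i$-persistence via the transposition $(i\ d)$ matches the paper's argument. You are in fact more careful than the paper: it simply asserts that the formula defines an action of $G$, whereas you correctly isolate the homomorphism check as the only nontrivial point and resolve its apparent circularity by induction on the level.
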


\begin{proof}
 Say the wreath recursion for the action on $\tree_{d-1}$ is given by $g=\rho(g)(g_1,\dots,g_{d-1})$. Now define an action of $G$ on $\tree_d$ via the wreath recursion $g=\iota\circ\rho(g)(g_1,\dots,g_{d-1},g)$, where $\iota\colon S_{d-1}\to S_d$ is induced by the inclusion $\{1,\dots,d-1\}\to\{1,\dots,d\}$. This is self-similar and $d$-persistent by construction. If the original action is finite-state then so is the new action. If the original action is coarsely diagonal then so is the new action. It remains to check the new action is faithful. Indeed, the inclusion $\{1,\dots,d-1\}\to\{1,\dots,d\}$ also defines an embedding $\tree_{d-1}\to\tree_d$, with image invariant under the action of $G$ on $\tree_d$. The action of $G$ restricted to this image is the same as the action of $G$ on $\tree_{d-1}$, which is faithful, so the action on $\tree_d$ is faithful as well. If we want an $i$-persistent action for some $i$ other than $d$, we need only conjugate by the tree automorphism induced by the transposition of $\{1,\dots,d\}$ switching $i$ and $d$, which preserves the properties of being self-similar, finite-state, and coarsely diagonal.
\end{proof}

For a $d$-persistent group $G$ we have the following result, which allows us to extract an element of $G$ out of an element of $V_d(G)$ in a canonical way.

\begin{lemma}\label{lem:clone_stable}
 Let $G\le \Aut(\tree_d)$ be a $d$-persistent self-similar group. Whenever two triples $(T_-,\sigma(g_1,\dots,g_n),T_+)$ and $(T_-',\sigma'(g_1',\dots,g_{n'}'),T_+')$ represent the same element of $V_d(G)$, we have $g_n=g_{n'}'$.
\end{lemma}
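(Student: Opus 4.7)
The approach is to show that the "last entry" of the wreath recursion in a triple is invariant under the expansion moves of Section~\ref{sec:nekr}. Combined with Lemma~\ref{lem:iff_exp}, which says that any two triples representing the same almost-automorphism share a common expansion, this immediately forces $g_n = g_{n'}'$, since both equal the last wreath recursion entry of the common expansion.

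Concretely, the plan is to reduce to single-leaf expansions, as pointed out in the paragraph preceding Lemma~\ref{lem:expand_one}: every expansion is obtained by a finite sequence of such moves, so it suffices to check invariance after one step. Suppose $(T_-', \sigma'(f_1, \ldots, f_{k-1}, f_k^1, \ldots, f_k^d, f_{k+1}, \ldots, f_n), T_+')$ arises from $(T_-, \sigma(f_1,\ldots,f_n), T_+)$ by adding a $d$-caret at the $k$th leaf of $T_+$, as described in Lemma~\ref{lem:expand_one}. The lexicographic order on leaves places the $d$ new leaves $v_k x_1, \ldots, v_k x_d$ into positions $k, k+1, \ldots, k+d-1$ of $T_+'$, shifting $v_{k+1}, \ldots, v_n$ to positions $k+d, \ldots, n+d-1$. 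There are two cases:

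If $k < n$, then the rightmost leaf of $T_+'$ is still $v_n$, and the last entry of the new wreath recursion is $f_n$, unchanged. If $k = n$, then the rightmost leaf of $T_+'$ is $v_n x_d$ and the last entry is $f_n^d$, the $d$th level-$1$ state of $f_n$. Since $G$ is $d$-persistent, every element $g \in G$ satisfies $g = \rho(g)(g_1,\ldots,g_{d-1},g)$, so in particular $f_n^d = f_n$. In both cases the last entry is preserved.

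Iterating, the last entry of the wreath recursion is an invariant of the expansion relation, and by Lemma~\ref{lem:iff_exp} it is therefore an invariant of the represented element of $V_d(G)$. Applying this to the two given triples yields $g_n = g_{n'}'$. The only subtlety is the bookkeeping of the lexicographic ordering after a $d$-caret is attached, which is routine once one checks that the newly inserted children of $v_k$ all precede $v_{k+1}$; there is no real obstacle beyond this, and the key substantive input is the persistence identity $f_n^d = f_n$ used in the case $k = n$.
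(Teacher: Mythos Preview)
Your proof is correct and follows essentially the same approach as the paper: reduce via Lemma~\ref{lem:iff_exp} to the case of a common expansion, further reduce by induction to a single-leaf expansion, apply Lemma~\ref{lem:expand_one}, and then handle the two cases $k<n$ and $k=n$ using $d$-persistence in the latter. The paper's proof is slightly terser in that it leaves the lexicographic bookkeeping implicit in the statement of Lemma~\ref{lem:expand_one}, but the logical structure is identical.
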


\begin{proof}
 By Lemma~\ref{lem:iff_exp} it suffices to prove this in the case when $(T_-',\sigma'(g_1',\dots,g_{n'}'),T_+')$ is an expansion of $(T_-,\sigma(g_1,\dots,g_n),T_+)$, and by induction it suffices to assume $T_+'$ is $T_+$ with a single $d$-caret added to one of its leaves, say the $k$th. Now Lemma~\ref{lem:expand_one} implies that $(g_1',\dots,g_{n'}')=(g_1,\dots,g_{k-1},g_k^1,\dots,g_k^d,g_{k+1},\dots,g_n)$, where $g_k=\rho(g_k)(g_k^1,\dots,g_k^d)$ is the wreath recursion of $g_k$. In particular if $k<n$ then $g_{n'}'=g_n$ trivially, and if $k=n$ then $g_{n'}'=g_n^d$, which equals $g_n$ since $G$ is $d$-persistent.
\end{proof}

\begin{proposition}\label{prop:quasi_retract}
 Let $G\le \Aut(\tree_d)$ be a finitely generated, persistent, finite-state self-similar group. Then there exists a quasi-retraction $V_d(G) \to G$.
\end{proposition}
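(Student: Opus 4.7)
The plan is to construct an explicit pair $(r,\iota)$ realizing the quasi-retraction. Assume without loss of generality (using the last sentence of Lemma~\ref{lem:persist} to relabel letters if needed) that $G$ is $d$-persistent. Take $\iota\defeq\iota_d\colon G\to V_d(G)$ to be the cone-$d$ embedding from Section~\ref{sec:simple}, sending $g$ to $[T,\id(1,\dots,1,g),T]$ where $T$ is the $d$-caret. Since $\iota_d$ is a group homomorphism and both $G$ and $V_d(G)$ are finitely generated---the latter by Lemma~\ref{lem:nekr_gens} combined with the classical finite generation of $V_d$---$\iota_d$ is Lipschitz. Take $r\colon V_d(G)\to G$ to be the rightmost-state map $[T_-,\sigma(g_1,\dots,g_n),T_+]\mapsto g_n$. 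Lemma~\ref{lem:clone_stable} (whose proof makes essential use of $d$-persistence) ensures $r$ is well-defined, and $r\circ\iota_d=\id_G$ is immediate.

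The crux of the proof is to verify that $r$ is $(C,D)$-Lipschitz. Fix the finite generating set $S=S_V\cup\iota_d(S_G)$ of $V_d(G)$, with $S_V$ generating $V_d$ and $S_G$ generating $G$. It suffices to bound $|r(fs)\cdot r(f)^{-1}|_G$ uniformly in $f\in V_d(G)$ for each $s\in S$.

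The case $s=\iota_d(g)$ with $g\in S_G$ is the straightforward one: iterated application of Lemma~\ref{lem:expand_one} to expand $\iota_d(g)$ to a common tree with $f$, together with $d$-persistence ensuring that the rightmost state of $f$ is unchanged under such expansion, yields $r(f\cdot\iota_d(g))=r(f)\cdot g$. The deviation is then $g$, of $G$-length at most $1$.

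The main obstacle is the case $s\in S_V$. Here the permutation $\pi$ defining $s$ can move the rightmost leaf of the common expansion of $fs$, giving $r(fs)=g_{\pi(n)}$ for some $\pi(n)\ne n$, possibly a different entry of $f$'s triple than $r(f)=g_n$. A priori this entry may be unrelated to $r(f)$. The finite-state hypothesis on $G$ should be the key to controlling $|g_{\pi(n)}\cdot g_n^{-1}|_G$ uniformly in $f$: since every element of $G$ has only finitely many states along the wreath recursion, the entries of any triple representation of $f$ (after expressing $f$ as a word in $S$) lie in a controlled set of products of elements from a fixed finite nucleus. Together with the bounded depth of $s$'s triple and $d$-persistence along the rightmost branch, this bounded complexity should yield a uniform bound on $|g_{\pi(n)}\cdot g_n^{-1}|_G$ by a constant depending only on $s$. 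Making this precise---especially controlling how the bounded-depth permutation $\pi$ interacts with the wreath-recursion states of $f$'s entries---is the principal technical step of the proof, and the point where both hypotheses (finite-state and persistence) are genuinely used in tandem. Once established, combining the Lipschitz property of $r$ with that of $\iota_d$ and the relation $r\circ\iota_d=\id_G$ exhibits the desired quasi-retraction.
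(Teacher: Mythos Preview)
Your approach has a genuine gap in the case $s\in S_V$, and in fact the map $r$ is \emph{not} Lipschitz with respect to right multiplication by generators. Take $f=\iota_1(g_0)=[T,\id(g_0,1,\dots,1),T]$ for any $g_0\in G$ and let $s=[T,(1~d),T]\in V_d$. Then $r(f)=1$, while a direct computation gives $fs=[T,(1~d)(1,\dots,1,g_0),T]$, so $r(fs)=g_0$. Thus $|r(fs)\cdot r(f)^{-1}|_G=|g_0|_G$ is unbounded in $f$, even though $f$ and $fs$ differ by a fixed element of $V_d$. The appeal to finite-state to control $|g_{\pi(n)}g_n^{-1}|_G$ cannot work: the entries $g_i$ of an arbitrary $f$ are arbitrary elements of $G$, and finite-state says nothing about their size. (Your ``easy'' case $s=\iota_d(g)$ has the same defect whenever $\rho(g)(d)\ne d$, which $d$-persistence alone does not preclude.)

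The fix, as carried out in the paper, is to multiply on the \emph{left} and work in the corresponding Cayley graph. Writing $s=[U_-,\tau(h_1,\dots,h_n),U_+]$ and expanding so that $U_+=T_-$, one finds $r(sx)=h_{\sigma(n)}\cdot r(x)$: the extra factor $h_{\sigma(n)}$ now comes from the \emph{generator} $s$, not from the arbitrary element $x$. This is where finite-state actually enters: it guarantees that $G$ admits a finite \emph{self-similar} generating set $S_G$, so that after expansion the entries of $s\in \iota_1(S_G)\cup S_{V_d}$ remain in $S_G\cup\{\id\}$. Hence $h_{\sigma(n)}\in S_G\cup\{\id\}$ and $r$ is $(1,0)$-Lipschitz. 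The asymmetry between left and right multiplication is the whole point: under $sx$ the last coordinate changes by a bounded left factor, whereas under $xs$ it is replaced by a different, uncontrolled coordinate of $x$.
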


\begin{proof}
 We can assume that $G$ is $d$-persistent. Let $\iota = \iota_\varnothing\colon G\to V_d(G)$ be the monomorphism $g\mapsto [1_1,g,1_1]$. Let $r\colon V_d(G) \to G$ be the function $[T_-,\sigma(g_1,\dots,g_n),T_+]\mapsto g_n$. Note that $r$ is well defined by Lemma~\ref{lem:clone_stable}. Clearly $r\circ\iota=\id_G$. Since $\iota$ is a homomorphism, it is $(C,0)$-Lipschitz for some $C\ge 1$. It remains to prove that $r$ is $(C',D)$-Lipschitz for some $C'\ge 1$ and $D\ge 0$, and in fact we will prove it is $(1,0)$-Lipschitz given the right choice of generating set.

Let $S_G$ be a finite, self-similar, symmetric generating set for $G$, which exists since $G$ is finitely generated and finite-state. Let $S_{V_d}$ be a finite symmetric generating set for $V_d$. By Lemma~\ref{lem:nekr_gens}
\[
S_{V_d(G)}\defeq \iota_1(S_G) \cup S_{V_d}
\]
is a (finite, symmetric) generating set of $V_d(G)$, where $\iota_1$ is as in Lemma~\ref{lem:nekr_gens}. We will consider the word metric on $G$ using $S_G$, and the word metric on $V_d(G)$ using $S_{V_d(G)}$. To show that $r$ is $(1,0)$-Lipschitz we need to show that if two elements of $V_d(G)$ are adjacent in the Cayley graph their images under $r$ are adjacent or coincide. For technical reasons we consider left Cayley graphs.

Let $x=[T_-,\sigma(g_1,\dots,g_n),T_+]\in V_d(G)$ and $s=[U_-,\tau(h_1,\dots,h_m),U_+]\in S_{V_d(G)}$ be arbitrary. Up to possibly expanding, we can assume $U_+=T_-$ (so $m=n$) and $h_i\in S_G \cup \{\id\}$ for all $i$, since $S_G$ is a self-similar set. Now
\[
sx=[U_-,\tau(h_1,\dots,h_n)\sigma(g_1,\dots,g_n),T_+] = [U_-,\tau\sigma(h_{\sigma(1)}g_1,\dots,h_{\sigma(n)}g_n),T_+]\text{,}
\]
so $r(sx)=h_{\sigma(n)}g_n$. Since $h_{\sigma(n)}\in S_G \cup \{\id\}$ and $r(x)=g_n$, this implies that $r(sx)$ and $r(x)$ are adjacent or coincide.
\end{proof}

Note in the proof of Proposition~\ref{prop:quasi_retract} that $h_{\sigma(n)}$ is not necessarily $r(s)$, unless for example $\sigma(n)=n$. In particular we explicitly see the failure of $r$ to be a homomorphism in the equation $r(sx)=h_{\sigma(n)}r(x)$. Since $V_d(G)$ may very well be virtually simple, we should not expect $r$ to be a homomorphism, and indeed it need not be.

Now that we have a quasi-retract from $V_d(G)$ to $G$, Theorem~\ref{thm:alonso} says that type $\F_n$ is passed from $V_d(G)$ to $G$ for $n \ge 2$. In order to also include $n = 1$ we make the following addition to Theorem~\ref{thm:alonso}. Here $d_S$ means the word metric with respect to $S$.

\begin{lemma}\label{lem:alonso_fg}
Let $Q$ and $H$ be countable groups and let $(S_i)_{i \in \N}$, $S_i\subseteq Q$ and $(T_i)_{t \in \N}$, $T_i\subseteq H$ be ascending sequences of finite sets such that $Q$ is generated by $\bigcup_i S_i$ and $H$ is generated by $\bigcup_i T_i$. Assume that there is a map $r \colon H \to Q$ such that for every $i$
\begin{enumerate}
\item if $d_{T_i}(x,y) < \infty$ then $d_{S_i}(r(x),r(y)) < \infty$, and
\item every element of $Q$ has finite distance with respect to $d_{S_i}$ to $r(H)$.
\end{enumerate}
If $H$ is finitely generated then so is $Q$.
\end{lemma}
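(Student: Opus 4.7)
The plan is to show that for some index $i_0$ the finite set $S_{i_0}$ already generates $Q$. I would proceed as follows.

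First, I would locate the index $i_0$. Since $H$ is finitely generated and the $T_i$ form an ascending chain whose union generates $H$, any finite generating set of $H$ is contained in some $T_{i_0}$, so $T_{i_0}$ itself generates $H$. In particular $d_{T_{i_0}}(x,y) < \infty$ for all $x, y \in H$, and hypothesis (i) at this index then gives $d_{S_{i_0}}(r(x), r(y)) < \infty$ for all $x, y \in H$.

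Next I would translate the ``finite distance'' conditions into statements about cosets. For any subset $S \subseteq Q$, two elements of $Q$ have finite $d_S$-distance if and only if they lie in the same coset of the subgroup $\langle S \rangle$; equivalently, the equivalence class of $1_Q$ is exactly $\langle S \rangle$ itself. Applied at $S = S_{i_0}$, the previous step says $r(H)$ is contained in a single coset of $\langle S_{i_0} \rangle$, namely the one containing $r(1_H)$. Hypothesis (ii) then places every $q \in Q$ at finite $d_{S_{i_0}}$-distance from some point of $r(H)$, so by transitivity $q$ lies in that same coset. Since $1_Q \in Q$, the coset contains the identity and therefore coincides with the subgroup $\langle S_{i_0} \rangle$. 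Hence $Q = \langle S_{i_0} \rangle$ is generated by the finite set $S_{i_0}$.

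I do not anticipate a substantive obstacle here. The argument is essentially a bookkeeping exercise about how the ``finite distance'' relation partitions $Q$ into cosets of subgroups generated by finite subsets, combined with the trivial observation that a coset containing the identity is the underlying subgroup. The only care needed is to fix a convention for left- versus right-invariant word metrics consistently, but the conclusion is insensitive to this choice.
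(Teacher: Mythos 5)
Your argument matches the paper's proof essentially step for step: locate $i_0$ with $T_{i_0}$ generating $H$, use hypothesis (i) to place all of $r(H)$ at pairwise finite $d_{S_{i_0}}$-distance, then use hypothesis (ii) and transitivity to conclude that $d_{S_{i_0}}$ is finite on all of $Q$, i.e.\ $S_{i_0}$ generates $Q$. One small slip in your justification of the first step: it is not literally true that \emph{any} finite generating set of $H$ is contained in some $T_{i_0}$ (a given finite generating set need not even lie in $\bigcup_i T_i$); the correct and sufficient statement is that \emph{some} $T_{i_0}$ generates $H$, which follows because each element of a fixed finite generating set is a word in finitely many elements of $\bigcup_i T_i$, and all those finitely many elements lie in a single $T_{i_0}$ by the ascending property.
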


\begin{proof}
If $H$ is finitely generated then some $T_i$ generates it, so any two points in $H$ have finite distance with respect to $d_{T_i}$. By the first assumption it follows that any two points in $r(H)$ have finite distance with respect to $d_{S_i}$. The second assumption then ensures that any two points in $Q$ have finite distance with respect to $d_{S_i}$, which means that $S_i$ generates $Q$.
\end{proof}

\begin{lemma}\label{lem:infinite_generation}
Let $G\le \Aut(\tree_d)$ be a persistent, finite-state self-similar group. If $V_d(G)$ is finitely generated then so is $G$.
\end{lemma}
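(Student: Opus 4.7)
The plan is to reuse the map $r\colon V_d(G) \to G$ defined in Proposition~\ref{prop:quasi_retract} --- well-defined by Lemma~\ref{lem:clone_stable} once we assume without loss that $G$ is $d$-persistent --- together with the generalization of Alonso's criterion given by Lemma~\ref{lem:alonso_fg}, which does not presuppose finite generation of the target. The proof of Proposition~\ref{prop:quasi_retract} itself cannot be applied directly, as it starts from a finite self-similar generating set of $G$; the adaptation is to replace such a generating set by an ascending exhaustion of $G$ by finite self-similar symmetric subsets, which is possible precisely because $G$ is finite-state.

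Concretely, since every finite-state element of $\Aut(\tree_d)$ is described by finite data, $G$ is countable; enumerate $G = \{g^{(1)}, g^{(2)}, \ldots\}$ and let $S_i$ be the set consisting of all states of $g^{(j)}$ and of $(g^{(j)})^{-1}$ for $j\le i$. By finite-stateness each $S_i$ is finite; by construction it is symmetric and self-similar (the states of $g^{-1}$ are the inverses of the states of $g$); the sequence is ascending; and $\bigcup_i S_i = G$. Let $S_{V_d}$ be a finite symmetric generating set of $V_d$ and put $T_i \defeq \iota_1(S_i) \cup S_{V_d}$; then $(T_i)$ is an ascending sequence of finite subsets of $V_d(G)$ whose union generates $V_d(G)$ by Lemma~\ref{lem:nekr_gens}.

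It remains to verify the two hypotheses of Lemma~\ref{lem:alonso_fg} for $H = V_d(G)$ and $Q = G$. Condition (ii) is immediate because $r\circ\iota = \id_G$ for $\iota(g) = [1_1,g,1_1]$, so $r$ is surjective. For condition (i) it suffices, as in the proof of Proposition~\ref{prop:quasi_retract}, to check that left-multiplying any $x \in V_d(G)$ by a single $s \in T_i$ changes $r(x)$ by an element of $S_i \cup \{\id\}$. Writing $x = [T_-,\sigma(g_1,\ldots,g_n),T_+]$ and expanding $x$ and $s$ until $s$ has the form $[U_-,\tau(h_1,\ldots,h_n),T_-]$, the self-similarity of $S_i$ guarantees that all entries $h_j$ lie in $S_i \cup \{\id\}$, and the multiplication formula reproduced from Proposition~\ref{prop:quasi_retract} gives $r(sx) = h_{\sigma(n)}\,r(x)$. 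The main --- indeed only --- delicate point of this plan is the self-similarity of each $S_i$: without the finite-state hypothesis there would be no way to produce such an exhaustion, and the crucial containment $h_{\sigma(n)} \in S_i$ would fail.
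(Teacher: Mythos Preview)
Your proposal is correct and follows essentially the same approach as the paper's proof: construct an ascending exhaustion of $G$ by finite, symmetric, self-similar subsets $S_i$, set $T_i = \iota_1(S_i)\cup S_{V_d}$, and verify that the map $r$ from Proposition~\ref{prop:quasi_retract} is surjective and $(1,0)$-Lipschitz with respect to each pair $(d_{T_i},d_{S_i})$, so that Lemma~\ref{lem:alonso_fg} applies. You are more explicit than the paper in actually exhibiting the $S_i$ (via states of an enumeration of $G$) and in justifying countability of $G$ through the finite-state hypothesis rather than through the assumed finite generation of $V_d(G)$, but these are elaborations of the same argument rather than a different route.
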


\begin{proof}
The proof is based on that of Proposition~\ref{prop:quasi_retract}. Note that $G$ is countable since it lies in the finitely generated group $V_d(G)$. Take $S_i$ to be an ascending, exhaustive sequence of finite, self-similar, symmetric subsets of $\iota_1(G)$, and take $T_i = S_i \cup S_{V_d}$ for each $i$. Define $r \colon V_d(G) \to G$ as before, so for every $i$ the map $r \colon (V_d(G),d_{T_i}) \to (G,d_{S_i})$ is surjective and $(1,0)$-Lipschitz. This shows that the hypotheses of Lemma~\ref{lem:alonso_fg} are satisfied and the claim follows.
\end{proof}

Combining Theorem~\ref{thm:alonso} with Proposition~\ref{prop:quasi_retract} and Lemma~\ref{lem:infinite_generation} we get:

\begin{theorem}\label{thm:negative}
 Let $G\le \Aut(\tree_d)$ be a persistent, finite-state self-similar group. If $V_d(G)$ is of type $\F_n$ (or $\FP_n$) then so is $G$.
\end{theorem}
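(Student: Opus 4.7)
The plan is to assemble the theorem directly from the three preceding ingredients. By hypothesis $G \le \Aut(\tree_d)$ is persistent and finite-state, so Proposition~\ref{prop:quasi_retract} supplies the one nontrivial geometric input: a quasi-retraction $r \colon V_d(G) \to G$ (with section $\iota$ the inclusion of $G$ as $\iota_\varnothing(G)$). Alonso's Theorem~\ref{thm:alonso} then transports type $\F_n$ and $\FP_n$ along quasi-retractions, provided both source and target are \emph{finitely generated}. Thus the only real obligation is to verify finite generation of $G$ before invoking Alonso.

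For the case $n = 1$ there is nothing to do beyond Lemma~\ref{lem:infinite_generation}: type $\F_1$ and type $\FP_1$ both coincide with finite generation, and the lemma says finite generation passes from $V_d(G)$ to $G$.

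For $n \ge 2$, I would argue as follows. Suppose $V_d(G)$ is of type $\F_n$ (respectively $\FP_n$). Since type $\F_n$ implies type $\F_1$, and type $\FP_n$ likewise implies type $\FP_1 = \F_1$, the group $V_d(G)$ is in particular finitely generated. Applying Lemma~\ref{lem:infinite_generation}, $G$ is finitely generated too. Now both hypotheses of Theorem~\ref{thm:alonso} are in place, and the quasi-retraction from Proposition~\ref{prop:quasi_retract} witnesses $G$ as a quasi-retract of $V_d(G)$, so $G$ inherits type $\F_n$ (respectively $\FP_n$).

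There is no genuine obstacle left at this stage; the substantive work has already been done in Proposition~\ref{prop:quasi_retract} (whose proof required the persistent, finite-state hypotheses to make $r([T_-,\sigma(g_1,\dots,g_n),T_+]) = g_n$ both well-defined, via Lemma~\ref{lem:clone_stable}, and $(1,0)$-Lipschitz for a suitable generating set) and in Lemma~\ref{lem:infinite_generation} (which bootstraps Alonso's argument down to $n=1$ via the variant Lemma~\ref{lem:alonso_fg}). The only mild subtlety worth flagging in writing is the need to deduce finite generation of $G$ before quoting Theorem~\ref{thm:alonso}, since that theorem is stated only for finitely generated groups; this is precisely what Lemma~\ref{lem:infinite_generation} was set up to handle.
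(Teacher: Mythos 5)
Your argument is exactly the paper's: the paper states the theorem as an immediate consequence of ``Combining Theorem~\ref{thm:alonso} with Proposition~\ref{prop:quasi_retract} and Lemma~\ref{lem:infinite_generation},'' and your write-up simply spells out the (correct) order of application — use Lemma~\ref{lem:infinite_generation} to secure finite generation of $G$, then Proposition~\ref{prop:quasi_retract} for the quasi-retraction, then Theorem~\ref{thm:alonso} to transport type $\F_n$/$\FP_n$. No gaps, and no deviation from the intended route.
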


\begin{remark}\label{rmk:retract}
 If $Q$ is a retract of a group $H$ then $Q$ has all the finiteness properties that $H$ has. This has a homological proof using the Bieri--Eckmann Criterion, as in \cite[Proposition~4.1]{bux04}, but it also has a geometric proof given by applying Theorem~\ref{thm:alonso}. The relationship between the non-$\FP_n$ proofs in \cite{witzel14} and in the present article is similar: in \cite{witzel14} a sophisticated variant of the homological argument is used, whereas here we instead use a geometric argument, by putting ourselves into a position where we can apply Theorem~\ref{thm:alonso}.
\end{remark}

\section{Self-similar groups separated by finiteness properties}
\label{sec:self_similar_fin_props}

The remaining ingredient needed to prove the Main Theorem is to find a family of coarsely diagonal, finite-state self-similar groups separated by finiteness properties. Self-similar groups separated by finiteness properties were first exhibited by Bartholdi, Neuhauser, and Woess in \cite{bartholdi08}, followed by Kochloukova and Sidki in \cite{kochloukova17} using virtual endomorphisms. Inspired by their approach we consider certain metabelian $S$-arithmetic groups separated by finiteness properties that naturally act on Bruhat--Tits trees and show that these natural actions are coarsely diagonal, finite-state and self-similar.

We briefly recall some facts about rings of $S$-integers in positive characteristic and establish notation along the way. We refer to \cite{weil67,artin67,niederreiter09} for more details and to \cite[Section~1.5]{witzel14phd} for an exposition using the same notation. Although our description is fairly general, the familiar case of a rational function field $\FF_q(t)$ is sufficient for the purpose of the Main Theorem and we will indicate the special cases throughout.

A \emph{global function field} $k$ is a finite extension of the field of rational functions $\FF_q(t)$ over a finite field $\FF_q$. A \emph{place} $[\nu]$ is an equivalence class of a discrete valuation $\nu \colon k \to \R \cup \{\infty\}$ modulo scaling ($\nu(\alpha) = \infty$ if and only if $\alpha = 0$). We say that an element $\alpha \in k$ has a \emph{pole} (respectively a \emph{zero}) at $[\nu]$ if $\nu(\alpha) < 0$ (respectively $\nu(\alpha) > 0$). If $S$ is a non-empty finite set of places, the \emph{ring of $S$-integers} $\calO_S$ consists of those elements of $k$ that have no poles at places outside $S$.

\begin{example}
The places of $\FF_q(t)$ correspond to points of the projective line over $\FF_q$. More precisely, there is a valuation $\nu_\alpha$ for every irreducible polynomial $\alpha \in \FF_q[t]$ of degree $e$, given by $\nu(\alpha^f (\beta/\gamma)) = f$ for $\alpha \nmid \beta, \gamma$, and corresponding to the point at infinity there is a valuation $\nu_\infty(\beta/\gamma) = \deg(\gamma) - \deg(\beta)$. These represent all places of $\FF_q(t)$. If $S = \{[\nu_\infty], [\nu_{\alpha_1}], \ldots, [\nu_{\alpha_n}]\}$ is a set of places containing $[\nu_\infty]$ then $\calO_S = \FF_q[t,\alpha_1^{-1},\ldots,\alpha_n^{-1}]$.
\end{example}

The completion $K = k_s$ of $k$ at a place $s$ is a \emph{local field}. In particular, its ring of integers $\calO$ has a unique maximal ideal $\frakm$ and the residue field $\kappa = \calO/\frakm = \FF_{q^e}$ is finite. Here $e$ is the \emph{degree} of $s$, which is the degree of $\kappa \cong \FF_{q^e}$ over the field of constants $\FF_q < k$. Up to scaling $K$ has a unique valuation $\nu(\alpha) = m$ with $m = \sup\{\ell \mid \alpha \in \frakm^\ell\}$ and the restriction to $k$ represents $s$. A \emph{uniformizing element} $\pi$ is a generator of $\frakm$ in $\calO$. If $\pi$ is a uniformizing element in a local field $K$ of positive characteristic then $K$ is the field of Laurent series $\FF_{q^e}\lseries{\pi}$. Its ring of integers is the ring of power series $\calO = \FF_{q^e}\pseries{\pi}$ and the maximal ideal is $\frakm = \pi\FF_{q^e}\pseries{\pi}$.

\begin{example}
The completion of $\FF_q(t)$ with respect to $[\nu_\alpha]$ is $\FF_{q^e}\lseries{\alpha^{-1}}$ if $\deg \alpha = e$. The completion of $\FF_q(t)$ with respect to $[\nu_\infty]$ is $\FF_q\lseries{t}$.
\end{example}

Now we discuss the relevant groups. Let $K$ be a local field of positive characteristic $p$ and let $\nu$, $\calO$, $\frakm$, $\kappa$ and $q$ be as above. Let $G_a$ and $G_m$ be the additive and multiplicative algebraic groups respectively, and consider the algebraic group $\AGL_1 = G_m \ltimes G_a$ (with the usual action) which can be thought of as the group of matrices given by
\[
\AGL_1(R) = \left\{\left. \begin{pmatrix} \alpha & \beta\\  & 1 \end{pmatrix} \right| \alpha \in R^\times, \beta \in R\right\}\text{.}
\]
This group is a subgroup of $\PGL_2$ and therefore $\AGL_1(K)$ acts on the Bruhat--Tits tree $\tree_K$ associated to $\PGL_2(K)$. Since we only want to describe the action of $\AGL_1(K)$, this tree can be realized particularly easily. The vertices of $\tree_K$ are residue classes $\gamma \mod \frakm^e$ with $\gamma \in K$ and $e \in \Z$, and two residue classes $\gamma \mod \frakm^e$ and $\delta \mod \frakm^f$ are joined by an edge whenever $\abs{e-f} = 1$ and $\gamma \equiv \delta \mod \frakm^{\min\{e,f\}}$. Here and in what follows we write elements of $\AGL_1(K)$ as pairs $(\alpha, \beta)$ with $\alpha \in G_m(K) = K^\times$ and $\beta \in G_a(K) = K$. An element $(\alpha,\beta) \in \AGL_1(K)$ acts via
\[
(\alpha,\beta).(\gamma \mod \frakm^e) = \alpha\gamma + \beta \mod \frakm^{e + \nu(\alpha)}\text{.}
\]

The boundary of $\tree_K$ consists of ends corresponding to elements $\gamma$ of $K$, which are approached by the sequence $\gamma \mod \frakm^e$ for $e \to \infty$, and an end, denoted $\infty$, which is approached by any sequence $\gamma \mod \frakm^e$ for $e \to -\infty$. In fact, $\partial \tree_K$ is homeomorphic to the projective line over $K$ in an $\AGL_1(K)$-equivariant way. In particular $\AGL_1(K)$ fixes the end $\infty$.

Moreover, the action of $\AGL_1(\calO)$ fixes the horoball around $\infty$ whose vertices are the $\gamma \mod \frakm^e$ with $e \le 0$. Consequently it acts on each of the rooted trees consisting of vertices $\gamma \mod \frakm^e$ with $e \ge 0$ and $\gamma \equiv \delta \mod \calO$, for any choice of $\delta \in K$. We denote by $\tree_\calO$ the tree corresponding to $\delta = 0$. In particular the vertices of $\tree_\calO$ are cosets $\gamma \mod \frakm^e$ with $\gamma \in \calO$ and $e \ge 0$. The root of $\tree_\calO$ is $\gamma \mod \frakm^0$ for any $\gamma$. We fix an order on $\kappa$ and obtain an induced order on the vertices of $\tree_\calO$, which in particular identifies $\tree_\calO$ with $\tree_{\abs{\kappa}}$.

We introduce the following notation: if $\gamma = \sum c_i \pi^i \in K$ is a Laurent series then $[x]_a^b$ is the truncated series $\sum_{a \le i < b} c_i \pi^i$. If the sub- or superscript is omitted, then so is the corresponding condition.

\begin{lemma}\label{lem:arith_states}
Let $\alpha \in G_m(\calO)$ and $\beta \in G_a(\calO)$, and let $\gamma \mod \frakm^e$ be a vertex of $\tree_\calO$.
\begin{enumerate}
\item The state of $\beta$ at $\gamma \mod \frakm^e$ is $\pi^{-e}[\beta]_e \in G_a(\calO)$.
\item The state of $\alpha$ at $\gamma \mod \frakm^e$ is $(\alpha,\delta) \in \AGL_1(\calO)$ where $\delta = \pi^{-e}\left(\alpha[\gamma]^e - [\alpha\gamma]^e\right)$.
\end{enumerate}
\end{lemma}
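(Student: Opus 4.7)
The plan is to compute the state of an arbitrary $f = (\alpha,\beta) \in \AGL_1(\calO)$ at a vertex $\gamma \mod \frakm^e$ directly from the definition, and then read off (i) and (ii) as the two obvious specializations. The setup needed is the canonical identification of the subtree of $\tree_\calO$ below $\gamma \mod \frakm^e$ with $\tree_\calO$ itself: choosing the representative $\gamma = [\gamma]^e$ and unraveling the $\kappa$-expansion of elements of $\calO$, this identification is
\[
\iota_{\gamma \mod \frakm^e} \colon \eta \mod \frakm^n \longmapsto [\gamma]^e + \pi^e \eta \mod \frakm^{e+n}\text{,}
\]
since the coefficients $c_0,\dots,c_{n-1}$ of $\eta = \sum c_i \pi^i$ are exactly the labels along the descending path.

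Because $\alpha \in \calO^\times$ has $\nu(\alpha) = 0$, the map $f$ preserves levels and sends $\gamma \mod \frakm^e$ to $\alpha\gamma + \beta \mod \frakm^e$. The state $f' = (\alpha',\beta') \in \AGL_1(\calO)$ at $\gamma \mod \frakm^e$ is characterized by
\[
f\bigl(\iota_{\gamma \mod \frakm^e}(\eta \mod \frakm^n)\bigr) = \iota_{\alpha\gamma+\beta \mod \frakm^e}\bigl(f'.(\eta \mod \frakm^n)\bigr)
\]
for every $n \in \N$ and $\eta \in \calO$. Expanding both sides (using $\gamma = [\gamma]^e$) and dividing by $\pi^e$ turns this into
\[
\alpha' \eta + \beta' \equiv \alpha \eta + \pi^{-e}[\alpha[\gamma]^e + \beta]_e \mod \frakm^n\text{,}
\]
and varying $\eta$ and $n$ pins down $\alpha' = \alpha$ and $\beta' = \pi^{-e}[\alpha[\gamma]^e + \beta]_e$; the latter lies in $\calO$ because $[\cdot]_e$ takes values in $\pi^e\calO$.

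Statements (i) and (ii) now follow by specialization. For (i), setting $\alpha = 1$ and using the $\FF_q$-linearity of $[\cdot]_e$ together with $[\gamma]_e = 0$ for the chosen representative collapses $\beta'$ to $\pi^{-e}[\beta]_e$. For (ii), setting $\beta = 0$ and rewriting $[\alpha[\gamma]^e]_e = \alpha[\gamma]^e - [\alpha[\gamma]^e]^e = \alpha[\gamma]^e - [\alpha\gamma]^e$ recovers the announced formula for $\delta$. The one point requiring care is the bookkeeping around the choice of representative: the formulas must be invariant under $\gamma \mapsto \gamma + \pi^e \calO$, which follows because $[\cdot]^e$ is unaffected by such shifts (and hence so is $[\alpha\gamma]^e$, since $\alpha \pi^e \calO \subseteq \pi^e \calO$), and one must also verify that $\iota_{\gamma \mod \frakm^e}$ is compatible with the globally fixed order on $\kappa$ used to identify $\tree_\calO$ with $\tree_{\abs{\kappa}}$. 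Everything else is a routine manipulation of truncations.
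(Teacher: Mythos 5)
Your proof is correct and takes essentially the same approach as the paper's: both work with the identification $\zeta \mapsto [\gamma]^e + \pi^e\zeta$ of the subtree below $\gamma \mod \frakm^e$ with $\tree_\calO$, apply the group element, and read the state off the resulting truncation bookkeeping. The only difference is organizational — you derive the general state formula $(\alpha,\pi^{-e}[\alpha[\gamma]^e+\beta]_e)$ for $(\alpha,\beta)\in\AGL_1(\calO)$ and then specialize, whereas the paper treats $G_a(\calO)$ and $G_m(\calO)$ by two separate (but structurally identical) computations on boundary points.
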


\begin{proof}
The ends of the subtree with root $\gamma \mod \frakm^e$ are of the form $[\gamma]^e + \pi^e\zeta$ for $\zeta \in \calO$. The element $\beta$ takes such an end to
\[
\beta + [\gamma]^e + \pi^e\zeta = [\beta+\gamma]^e + \pi^e(\pi^{-e}[\beta]_e + \zeta) \text{,}
\]
showing the first claim.

Similarly, $\alpha$ takes $[\gamma]^e + \pi^e\zeta$ to
\[
\alpha \cdot \left([\gamma]^e + \pi^e\zeta\right) = \alpha[\gamma]^e + \pi^e\alpha\zeta = [\alpha\gamma]^e + \pi^e\left(\alpha\zeta + \pi^{-e}\left(\alpha[\gamma]^e - [\alpha\gamma]^e\right)\right)\text{,}
\]
showing the second claim. Note that the first $(e-1)$ coefficients of $\alpha[\gamma]^e$ and $[\alpha\gamma]^e$ coincide so $\delta = \pi^{-e}\left(\alpha[\gamma]^e - [\alpha\gamma]^e\right)$ really does lie in $\calO$.
\end{proof}

In particular, Lemma~\ref{lem:arith_states} provides a criterion for $\AGL_1(R)$ to be self-similar for $R < \calO$: for every $e \ge  0$ and every $\gamma \in \calO$, whenever $\beta \in R$ then $\pi^{-e}[\beta]_e$ has to be in $R$ as well, and if $\beta$ is invertible then in addition $\pi^{-e}(\beta[\gamma]^e - [\beta\gamma]^e)$ has to be in $R$.

Now let $k$ be a global function field and let $s$ be a place of degree $1$. Let $K = k_s$ be the completion of $k$ at $s$ and, as before, let $\calO$ be its ring of integers. Let $\pi$ be a uniformizing element, which we take to lie in $k$. Using this setup we have $K = \FF_q\lseries{\pi}$ and $\calO = \FF_q\pseries{\pi}$.

In order to get finite-state actions we will have to restrict to rational function fields because of the following:

\begin{lemma}\label{lem:global}
A Laurent series $\alpha \in K$ lies in $\FF_q(t)$ if and only if its coefficients are eventually periodic.
\end{lemma}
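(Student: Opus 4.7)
The plan is to prove both implications by standard arguments analogous to the characterization of rationals via eventually periodic decimal expansions.

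For the ``if'' direction, suppose $\alpha = \sum_{i \ge N} c_i \pi^i \in K$ has coefficients that are eventually periodic: there exist $M \ge N$ and $\ell \ge 1$ such that $c_{i+\ell} = c_i$ for all $i \ge M$. I would split $\alpha$ as a Laurent polynomial $\sum_{N \le i < M} c_i \pi^i \in \FF_q[\pi,\pi^{-1}]$ plus a periodic tail, and recognize the tail as a geometric series
\[
\sum_{i \ge M} c_i \pi^i \;=\; \pi^M Q(\pi) \sum_{j \ge 0} \pi^{j\ell} \;=\; \frac{\pi^M Q(\pi)}{1-\pi^\ell}
\]
where $Q(\pi) = \sum_{0 \le r < \ell} c_{M+r}\pi^r \in \FF_q[\pi]$. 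Both pieces lie in $\FF_q(\pi) = \FF_q(t)$, so $\alpha$ does as well.

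For the ``only if'' direction, suppose $\alpha \in \FF_q(t) = \FF_q(\pi)$, so $\alpha = P/Q$ with $P, Q \in \FF_q[\pi]$ and $Q \neq 0$. After factoring out a suitable power of $\pi$ from $P$ and $Q$ (which only shifts the coefficient sequence, preserving eventual periodicity in either direction), I can assume $Q(0) \neq 0$, so $Q$ is a unit in $\calO = \FF_q\pseries{\pi}$. Writing $Q = 1 + q_1\pi + \cdots + q_n\pi^n$ (after dividing by $Q(0)$), the coefficients $(b_i)_{i \ge 0}$ of $1/Q = \sum b_i\pi^i$ satisfy the linear recurrence $b_i = -q_1 b_{i-1} - \cdots - q_n b_{i-n}$ for $i \ge n$. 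The ``state'' $(b_i,\ldots,b_{i-n+1}) \in \FF_q^n$ determines all subsequent coefficients, and there are only finitely many such states, so by the pigeonhole principle the sequence $(b_i)$ is eventually periodic. Finally, multiplication by $P \in \FF_q[\pi]$ takes a finite $\FF_q$-linear combination of shifts of $(b_i)$, and this operation clearly preserves the property of being eventually periodic, so the coefficient sequence of $\alpha = P \cdot (1/Q)$ is eventually periodic as well.

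No single step is really an obstacle; the only care needed is handling the shift by powers of $\pi$ at the start of the second direction (since $\alpha$ lives in the Laurent series field, not in $\calO$) and verifying that eventual periodicity is genuinely preserved by finite linear combinations of shifts and by the reduction $Q \mapsto Q/Q(0)$.
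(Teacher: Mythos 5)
Your proof is correct and takes essentially the same route as the paper: the ``if'' direction is a geometric-series computation (the paper writes $\pi^f\alpha = \beta/(\pi^n-1)$ after adjusting the onset of periodicity, which is the same idea), and the ``only if'' direction is the paper's two-line observation that rational Laurent series satisfy a linear recurrence and hence, over a finite field, have eventually periodic coefficients. The only difference is one of economy: the paper cites the linear-recurrence characterization as well known, while you spell out the pigeonhole argument on states $(b_i,\ldots,b_{i-n+1})\in\FF_q^n$ and the reduction to $Q(0)\neq 0$, which is perfectly valid and makes the proof self-contained.
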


\begin{proof}
It is well known that a Laurent series is rational if and only if it satisfies a linear recurrence rule. Since the coefficient field is finite, such a sequence is eventually periodic.

Conversely, if $\alpha$ has period $\ell$ starting with the $m$th coefficient then it is also periodic with period $n = \ell \ceil{m/\ell}$ starting with the $n$th coefficient. Thus there is an $f$ such that $\pi^f\alpha = \beta/(\pi^n-1)$ for a $\beta$ a polynomial in $\pi$ of degree at most $f + n-1$. It follows that $\alpha = \beta/(\pi^{n+f} - \pi^{f})$ is rational.
\end{proof}

\begin{lemma}
\label{lem:arith_individual_finite-state}
If $k  = \FF_q(t)$ is a rational function field then the action of $\AGL_1(k \cap \calO)$ on $\tree_\calO$ is finite-state.
\end{lemma}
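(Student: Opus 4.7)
The strategy is to exhibit a generating set of $\AGL_1(k \cap \calO)$ consisting of finite-state elements and then invoke Lemma~\ref{lem:product_of_finite_state}. Since every element decomposes as $(\alpha,\beta) = (1,\beta)(\alpha,0)$, it suffices to prove that each pure translation $(1,\beta)$, for $\beta \in k \cap \calO$, and each pure scaling $(\alpha,0)$, for $\alpha \in (k \cap \calO)^\times$, is finite-state.

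The translation case is immediate from Lemma~\ref{lem:arith_states}(i): the level-$e$ state of $(1,\beta)$ at any vertex is $(1,\beta^{(e)})$, where $\beta^{(e)} \defeq \pi^{-e}[\beta]_e$ is the shift of the coefficient sequence of $\beta$ by $e$ positions. By Lemma~\ref{lem:global}, this sequence is eventually periodic, so only finitely many shifts occur; the family of shifts is visibly closed under further shifting, so iterating the wreath recursion produces no new states.

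The scaling case is the main obstacle. Starting from Lemma~\ref{lem:arith_states}(ii) and expanding $\pi^{-e}(\alpha[\gamma]^e - [\alpha\gamma]^e)$ as a double sum in the coefficients $a_i$ of $\alpha$ and $d_j$ of $\gamma$, I would derive the identity
\[
\delta_{\gamma,e} \;=\; \sum_{k=1}^{e} d_{e-k}\, \alpha^{(k)}\text{,}
\]
so every $\delta_{\gamma,e}$ lies in the $\FF_q$-linear span $W \subseteq \calO$ of the shifts $\{\alpha^{(k)} : k \geq 1\}$. Applying Lemma~\ref{lem:global} to $\alpha$, the family of shifts is finite, so $W$ is a finite-dimensional $\FF_q$-vector space and therefore a finite set. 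Since shifting is $\FF_q$-linear and carries $\alpha^{(k)}$ to $\alpha^{(k+1)}$, the set $W$ is closed under further shifting. Combining this with Lemma~\ref{lem:arith_states}(i) and the wreath-recursion rule for products, an induction on levels confirms that every state of $(\alpha,0)$ has the form $(\alpha,\eta)$ with $\eta \in W$, hence lies in a finite set. A final appeal to Lemma~\ref{lem:product_of_finite_state} finishes the proof.
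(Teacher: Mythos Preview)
Your proof is correct and follows essentially the same approach as the paper: reduce via Lemma~\ref{lem:product_of_finite_state} to pure translations and pure scalings, dispatch translations by the eventual periodicity of Lemma~\ref{lem:global}, and for scalings show that only finitely many carry terms $\delta$ arise. The one cosmetic difference is in that last step: the paper bounds the $\delta$'s directly by computing that each has the same period and pre-period length as $\alpha$, whereas you observe (equivalently) that every $\delta$ lies in the finite $\FF_q$-span of the shifts $\alpha^{(k)}$ and that this span is shift-closed; both arguments are really the same periodicity fact seen from different angles.
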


\begin{proof}
Using Lemma~\ref{lem:product_of_finite_state} it suffices to show that $G_m(k \cap \calO)$ and $G_a(k \cap \calO)$ are finite-state.

If $\beta \in G_a(k \cap \calO)$ then it is eventually periodic by Lemma~\ref{lem:global}. Therefore the set of $\pi^{-e}[\beta]_e$ for $e \ge 0$, which is its set of states by Lemma~\ref{lem:arith_states}, is finite.

The state of $\alpha \in G_m(k \cap \calO)$ at $\gamma \mod \frakm^e$ is $(\alpha,\delta) \in \AGL_1(\calO)$ with $\delta = \pi^{-e}\left(\alpha[\gamma]^e - [\alpha\gamma]^e\right)$ by Lemma~\ref{lem:arith_states}. Note that $\delta = \pi^{-e}[\alpha[\gamma]^e]_e$, so $\delta$ is rational since $\alpha$ is. In particular every $\delta$ that occurs lies in $G_a(k\cap \calO)$, and hence is finite-state by the previous paragraph. It now suffices to show that only finitely many $\delta$ can occur for a fixed $\alpha$.

To see this, write $\alpha = \sum_i a_i \pi^i$, $\gamma = \sum_i b_i \pi^i$ and $\delta = \sum_i c_i \pi^i$. Then
\[
c_i = \sum_{\substack{0\le \ell \le e-1\\j+\ell = e + i}} a_j b_\ell = \sum_{\ell=0}^{e-1} a_{i + (e-\ell)} b_\ell\text{.}
\]
It follows that $\delta$ is eventually periodic with the same period as $\alpha$ and, moreover, the periodicity of $\delta$ starts (at the latest) at the same index as for $\alpha$. Since these conclusions hold regardless of $\gamma$, indeed there are only finitely many $\delta$ per $\alpha$.
\end{proof}

\begin{remark}
The condition on $k$ is clearly necessary: if $k$ is a global function field that is not a rational function field then it contains an aperiodic power series $\alpha$ by Lemma~\ref{lem:global}. Lemma~\ref{lem:arith_states} shows that $\alpha$ regarded as an element of $G_a(k)$ is not finite-state.
\end{remark}

We now return to $k$ being an arbitrary global function field. No restrictions are necessary to get coarse diagonality:

\begin{lemma}
\label{lem:arith_coarsely_diagonal}
The action of $\AGL_1(\calO)$ on $\tree_{\calO}$ is coarsely diagonal.
\end{lemma}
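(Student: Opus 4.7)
The plan is to show that every state of a given element $(\alpha,\beta) \in \AGL_1(\calO)$ has the same first component $\alpha$, so that the ratio of an element with one of its states is always a pure translation, which automatically has finite order in characteristic $p$.

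First, I would compute the state of $g = (\alpha,\beta)$ at an arbitrary vertex $\gamma \mod \frakm^e$ of $\tree_\calO$ directly from the action. Evaluating $g$ on an end $[\gamma]^e + \pi^e \zeta$ of the subtree rooted at $\gamma \mod \frakm^e$ yields
\[
\alpha[\gamma]^e + \beta + \pi^e \alpha \zeta\text{.}
\]
Using that $K$ has characteristic $p$ (so addition of power series involves no carries) and the identity $[\alpha[\gamma]^e]^e = [\alpha\gamma]^e$ from the proof of Lemma~\ref{lem:arith_states}, this rewrites as $[\alpha\gamma + \beta]^e + \pi^e\bigl(\delta + \pi^{-e}[\beta]_e + \alpha\zeta\bigr)$, where $\delta = \pi^{-e}\bigl(\alpha[\gamma]^e - [\alpha\gamma]^e\bigr) \in \calO$. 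Hence the state of $(\alpha,\beta)$ at $\gamma \mod \frakm^e$ is $(\alpha, \delta + \pi^{-e}[\beta]_e)$, with the same first component as $g$. By induction every iterated state of $(\alpha,\beta)$ is of the form $(\alpha, \beta')$ for some $\beta' \in \calO$.

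A direct matrix computation then gives
\[
(\alpha, \beta')^{-1} (\alpha, \beta) = (1, (\beta - \beta')/\alpha)\text{,}
\]
which is a pure translation in $\AGL_1(\calO)$. Since $\calO$ has characteristic $p$, every such translation satisfies $(1,c)^p = (1, pc) = (1, 0)$, so $(\alpha,\beta')^{-1}(\alpha,\beta)$ has order dividing $p$, and in particular finite order. Thus $\AGL_1(\calO)$ acts coarsely diagonally on $\tree_\calO$.

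The only subtlety is checking that arbitrary iterated states (and not merely level-$1$ states) preserve the first component, but this is immediate from the level-$1$ computation, as it applies uniformly to any element of $\AGL_1(\calO)$. No hard step is expected; the key insight is simply that the scaling component is state-invariant, after which coarse diagonality follows from positive characteristic.
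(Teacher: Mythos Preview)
Your proof is correct and follows essentially the same approach as the paper: the paper invokes Lemma~\ref{lem:arith_states} to conclude that every state of $(\alpha,\beta)$ has the form $(\alpha,\delta)$, whence $(g')^{-1}g \in G_a(\calO)$ has exponent $p$, while you carry out the state computation for a general $(\alpha,\beta)$ explicitly. One minor remark: the parenthetical ``so addition of power series involves no carries'' is not a consequence of characteristic $p$ (power-series addition is always coefficientwise); the characteristic is only needed, as you correctly use it, to force translations to have order dividing $p$.
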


\begin{proof}
By Lemma~\ref{lem:arith_states}, every state of $g = (\alpha,\beta) \in \AGL_1(\calO)$ is of the form $g' = (\alpha,\delta)$ for some $\delta$. It follows that $(g')^{-1}g \in G_a(\calO)$, which has exponent $p$.
\end{proof}

From now on let $S$ be a finite, non-empty set of places not containing $s$ and let $\calO_S$ be the ring of $S$-integers of $k$. We choose the uniformizing element $\pi \in \frakm$ to lie in $\calO_S$, which is possible by the Riemann--Roch theorem \cite[Proposition~3.6.14]{niederreiter09} (for example if $[\nu_\infty] \in S$ then $k \cap \calO_{\{[\nu_\infty]\}} \ge \FF_q[t]$ and we can take $\pi = t$).

Our interest in the groups $\AGL_1(\calO_S)$ is based on the following result.

\begin{theorem}[{\cite{bux97,kochloukova96}}]\label{thm:metabelian_fin_props}
Let $k$ be a global function field and let $S$ be a non-empty finite set of places. The group $\AGL_1(\calO_S)$ is of type $\F_{\abs{S}-1}$ but not of type $\FP_{\abs{S}}$.
\end{theorem}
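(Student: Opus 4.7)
The plan is to use the action of $G = \AGL_1(\calO_S)$ on the product of Bruhat--Tits trees $X = \prod_{s \in S} \tree_{k_s}$ via the diagonal embedding $\calO_S \hookrightarrow \prod_{s \in S} k_s$. For each $s$ the end $\infty_s$ is $\AGL_1(k_s)$-fixed, giving a Busemann function $h_s \colon \tree_{k_s} \to \Z$ (essentially the signed level from the root, extending the vertex heights appearing in Lemma~\ref{lem:arith_states}). I would combine these into a height $h = \sum_{s \in S} \deg(s) h_s$ on $X$. The additive part $\calO_S$ preserves each $h_s$ individually, while the multiplicative part $\calO_S^\times$ shifts $h_s$ by $-\nu_s(\alpha)$; by the product formula $\sum_s \deg(s)\nu_s(\alpha) = 0$ (which holds because elements of $\calO_S^\times$ have no poles or zeros outside $S$) the combined height $h$ is $G$-invariant. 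Consequently $G$ acts cocompactly on each level $h^{-1}(n)$, and one studies $G$ via Brown's criterion applied to this horospherical setup.

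\textbf{Positive direction.} Cell stabilizers for the action on $X$ are compact-open subgroups of $\prod_s \AGL_1(k_s)$ intersected with $G$, hence finite up to $\FF_q^\times$ and in particular of type $\F_\infty$. It then suffices to show that the levels $h^{-1}(n)$, a codimension-one subcomplex of the $\abs{S}$-dimensional product $X$, are $(\abs{S}-2)$-connected. This is established by Bestvina--Brady-style Morse theory along the individual coordinate functions $h_s$: each tree is a $1$-complex in which the ascending and descending links at every vertex are discrete and non-empty, and the join/product structure propagates this to the product. Brown's criterion then yields type $\F_{\abs{S}-1}$.

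\textbf{Negative direction.} This is the deeper statement. One route is direct: analyze the descending link at a generic vertex of height $n$ and show it fails to be $(\abs{S}-1)$-connected, producing a non-finitely-generated obstruction that detects the failure of $\FP_{\abs{S}}$. A more efficient route, taken in \cite{kochloukova96}, uses the Bieri--Neumann--Strebel--Renz $\Sigma$-invariants for metabelian groups: the valuation characters $\chi_s \colon G \to \R$ descend to the free part of $G^{\mathrm{ab}}$, which has rank $\abs{S}-1$ by Dirichlet's unit theorem for global function fields. The set $\{-\chi_s : s \in S\}$ lies in $\Sigma^{\abs{S}-1}$ but not in $\Sigma^{\abs{S}}$, which by the Bieri--Groves/Kochloukova $\Sigma$-criterion for $\FP_n$ of metabelian groups gives precisely $\FP_{\abs{S}-1}$ without $\FP_{\abs{S}}$.

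\textbf{Main obstacle.} The hardest step is the negative direction. Positive finiteness statements for arithmetic actions on products of buildings are by now routine, but the non-$\FP_{\abs{S}}$ obstruction requires either a delicate Morse-theoretic analysis of the descending link geometry near infinity (as in \cite{bux97}) or invoking the full strength of the metabelian $\Sigma$-invariant machinery (as in \cite{kochloukova96}). The positive direction is technical but follows from well-understood machinery for cocompact actions with integer-valued height functions.
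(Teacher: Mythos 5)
The paper does not prove this theorem; it imports it wholesale from the cited sources \cite{bux97,kochloukova96}, and its only indication of method is the one-sentence remark immediately afterwards: the group acts as a discrete group on the product of Bruhat--Tits trees $\prod_{u\in S}\tree_{k_u}$ and the theorem ``is proved by describing a cocompact subspace for the action.'' Your sketch is consistent with that remark and with what the cited references actually do: the $G$-invariant horospherical height $h=\sum_s \deg(s)h_s$ (invariance following from the product formula applied to $S$-units), finite cell stabilizers, Brown's criterion with descending-link analysis for the positive direction, and either a Morse-theoretic obstruction or the Bieri--Strebel/Kochloukova $\Sigma$-invariant machinery for the negative direction. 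Since the paper itself supplies no proof, your proposal is not so much an alternative route as a reasonable reconstruction of the cited arguments, and it correctly identifies the hard part (the non-$\FP_{\abs{S}}$ direction) as well as the two available strategies for it.

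Two small inaccuracies worth noting. First, the vertex stabilizers are simply finite, being the intersection of the discrete subgroup $\AGL_1(\calO_S)$ with a compact vertex stabilizer in $\prod_s \AGL_1(k_s)$; the hedge ``finite up to $\FF_q^\times$'' is unnecessary and slightly misleading. Second, for Brown's criterion one filters by cocompact invariant subcomplexes such as $h^{-1}([-n,n])$ rather than working only with the single level set $h^{-1}(0)$; the relevant connectivity statements then concern ascending and descending links at vertices on the boundary of the filtration stages, and the negative direction requires showing that the $(\abs{S}-1)$-st reduced homology of these stages survives in the directed system rather than merely that a single link fails to be $(\abs{S}-1)$-connected. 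These are refinements of detail, not errors in the overall strategy, and you flag the negative direction as the subtle step, which is accurate.
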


\begin{remark}
The group $\AGL_1(\calO_S)$ acts on the product of trees $\prod_{u \in S} \tree_{k_u}$ as a discrete group and Theorem~\ref{thm:metabelian_fin_props} is proved by describing a cocompact subspace for the action.
\end{remark}

We need one more general lemma, which is our main reason to restrict $s$ to degree $1$:

\begin{lemma}
\label{lem:pole}
For a series $\alpha \in \calO$ we have $\alpha \in \calO_S$ if and only if $\beta = \pi^{-j}[\alpha]_j \in \calO_S$.
\end{lemma}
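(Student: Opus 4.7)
The plan is to decompose $\alpha = [\alpha]^j + \pi^j \beta$, which follows directly from $\alpha = [\alpha]^j + [\alpha]_j$ and the definition $\beta = \pi^{-j}[\alpha]_j$. The first observation is that $[\alpha]^j = \sum_{0 \le i < j} c_i \pi^i$ is a polynomial in $\pi$ with coefficients in the residue field $\kappa$ of $s$. Here the degree-one hypothesis on $s$ enters essentially: it forces $\kappa = \FF_q$, so the $c_i$ lie in $\FF_q \subseteq \calO_S$, and combined with $\pi \in \calO_S$ this gives $[\alpha]^j \in \calO_S$. Consequently, $\alpha \in \calO_S$ if and only if $\pi^j \beta = \alpha - [\alpha]^j$ lies in $\calO_S$.

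The easy direction ($\beta \in \calO_S \Rightarrow \alpha \in \calO_S$) is then immediate, since $\pi^j \in \calO_S$ and $\calO_S$ is a ring. For the converse, assume $\pi^j \beta \in \calO_S$; one verifies $\nu_u(\beta) \ge 0$ place by place. At $u = s$ this holds automatically: $\beta = \sum_{i \ge 0} c_{i+j} \pi^i$ is a formal power series in $\pi$ and thus lies in $\calO$. At a place $u \notin S \cup \{s\}$, the plan is to exploit a strengthening of the choice of $\pi$ --- namely that its divisor is supported on $\{s\} \cup S$, so in particular $\nu_u(\pi) = 0$ --- which then gives $\nu_u(\beta) = \nu_u(\pi^j\beta) - j\nu_u(\pi) = \nu_u(\pi^j\beta) \ge 0$.

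The main (and essentially only) obstacle is this strengthening of the choice of $\pi$: not merely $\pi \in \calO_S$ with $\nu_s(\pi) = 1$, but $\pi$ a unit at every place outside $S \cup \{s\}$. This is what the appeal to Riemann--Roch in the paragraph preceding the lemma is really providing, and it is manifest in the paper's explicit example $\pi = t$ (with $s = [\nu_t]$ and $[\nu_\infty] \in S$), whose divisor $s - [\nu_\infty]$ is visibly supported on $\{s\} \cup S$. Without such a choice the statement can genuinely fail: if $\pi$ has a zero at some $u \notin S \cup \{s\}$, then multiplication by $\pi^j$ can absorb an honest pole of $\beta$ at $u$, producing an $\alpha = [\alpha]^j + \pi^j \beta \in \calO_S$ even though $\beta \notin \calO_S$.
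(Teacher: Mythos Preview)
Your proof is essentially the paper's: both use the decomposition $\alpha = [\alpha]^j + \pi^j\beta$ with $[\alpha]^j \in \FF_q[\pi] \subseteq \calO_S$ (this is exactly where the degree-one hypothesis on $s$ enters), and the paper's pass through $\calO_{S\cup\{s\}}$ followed by removing the place $s$ via $\beta \in \calO$ is your place-by-place verification rephrased. Your observation that the hard direction tacitly requires $\pi$ to have no zeros outside $S\cup\{s\}$ is correct and is a point the paper's terse argument equally relies on without comment (the step ``$\pi^j\beta \in \calO_{S\cup\{s\}} \Rightarrow \beta \in \calO_{S\cup\{s\}}$'' needs $\pi \in \calO_{S\cup\{s\}}^\times$); this is unproblematic in the rational-function-field case actually used for the Main Theorem, where one may take $\pi$ with divisor supported on $S \cup \{s\}$, but your caution about the general global-function-field statement is well placed.
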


\begin{proof}
Consider the relation
\[
\alpha = \pi^j\beta + \gamma
\]
where $\gamma = [\alpha]^j$ is a polynomial in $\pi \in \calO_S$. It follows that $\alpha \in \calO_{S \cup \{s\}}$ if and only if $\beta \in \calO_{S \cup \{s\}}$, and the explicit construction shows that $\beta$ does not have a pole at $s$.
\end{proof}

It follows that the groups $\AGL_1(\calO_S)$ suit our needs:

\begin{proposition}
\label{prop:arith_self_similar}
Let $k$ be a global function field and let $S$ be a finite non-empty set of places not containing $s$.
The action of $\AGL_1(\calO_S)$ on $\tree_{\calO}$ is self-similar and coarsely diagonal. If $k$ is a rational function field, the action is also finite-state.
\end{proposition}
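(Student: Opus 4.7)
The plan is to verify the three properties separately, with most of the work going into self-similarity. Coarse diagonality is immediate: since $\AGL_1(\calO_S)$ sits inside $\AGL_1(\calO)$ acting on the same rooted tree $\tree_\calO$, Lemma~\ref{lem:arith_coarsely_diagonal} applies verbatim. For the finite-state claim in the rational case, I would observe that $\calO_S \subseteq k \cap \calO$: membership in $k$ is by definition, and $s \notin S$ forces $\nu_s(\alpha) \ge 0$ for every $\alpha \in \calO_S$. Hence $\AGL_1(\calO_S) \le \AGL_1(k \cap \calO)$ and Lemma~\ref{lem:arith_individual_finite-state} finishes the job.

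For self-similarity I would use Lemma~\ref{lem:arith_states} to read off each state and Lemma~\ref{lem:pole} to check that it lies in $\calO_S$. The crucial point, and the only place where the degree-$1$ hypothesis on $s$ really enters, is that the residue field $\kappa = \FF_q$ is the field of constants, which embeds canonically in $\calO_S$. Consequently, for any $\gamma \in \calO = \FF_q\pseries{\pi}$, the truncation $[\gamma]^e = \sum_{i=0}^{e-1} c_i \pi^i$ is a polynomial in $\pi \in \calO_S$ with coefficients $c_i \in \FF_q \subseteq \calO_S$, so $[\gamma]^e \in \calO_S$.

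Given this, the concrete steps are: first, for $\beta \in G_a(\calO_S)$, Lemma~\ref{lem:arith_states} identifies the state as $\pi^{-e}[\beta]_e$, which lies in $\calO_S$ by Lemma~\ref{lem:pole}. Second, for $\alpha \in G_m(\calO_S)$, the state at $\gamma \bmod \frakm^e$ is $(\alpha, \delta)$ with $\delta = \pi^{-e}(\alpha[\gamma]^e - [\alpha\gamma]^e)$; setting $\xi = \alpha[\gamma]^e$, we have $\xi \in \calO_S$ (product of two elements of $\calO_S$) and, since $\alpha\gamma - \xi \in \frakm^e$, the identity $\delta = \pi^{-e}[\xi]_e$ together with Lemma~\ref{lem:pole} gives $\delta \in \calO_S$. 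Third, a general element $(\alpha,\beta) \in \AGL_1(\calO_S)$ factors as $(\alpha,0)(1,\beta)$, so its states at any vertex are products of states of $\alpha$ and $\beta$ via the wreath-recursion product formula from Section~\ref{sec:self_sim}, hence lie in $\AGL_1(\calO_S)$. (Alternatively, the same calculation used for Lemma~\ref{lem:arith_states} shows directly that the state of $(\alpha,\beta)$ at $\gamma \bmod \frakm^e$ is $(\alpha, \pi^{-e}[\alpha[\gamma]^e + \beta]_e)$, and one applies Lemma~\ref{lem:pole} to the element $\alpha[\gamma]^e + \beta \in \calO_S$.)

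The argument is not deep; the only thing to be careful about is the distinction between $\calO$ and $\calO_S$. The main obstacle is simply ensuring that every truncated quantity used in the state formulas actually lies in $\calO_S$ rather than merely $\calO$, which is precisely what the degree-$1$ assumption together with Lemma~\ref{lem:pole} is designed to guarantee.
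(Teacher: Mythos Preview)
Your proposal is correct and follows essentially the same approach as the paper: reduce coarse diagonality and the finite-state claim to Lemmas~\ref{lem:arith_coarsely_diagonal} and~\ref{lem:arith_individual_finite-state} via the inclusion $\calO_S \subseteq \calO$ (resp.\ $\calO_S \subseteq k \cap \calO$), and verify self-similarity by computing states of additive and multiplicative generators using Lemma~\ref{lem:arith_states}, with Lemma~\ref{lem:pole} and the fact that truncations are $\FF_q$-polynomials in $\pi \in \calO_S$ doing the work. Your handling of the multiplicative case via the rewriting $\delta = \pi^{-e}[\xi]_e$ with $\xi = \alpha[\gamma]^e \in \calO_S$ and a second appeal to Lemma~\ref{lem:pole} is slightly more explicit than the paper's one-line conclusion, but the content is the same.
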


\begin{proof}
We have $\calO_S \subseteq \calO$ by the assumption that $s \not\in S$. It therefore follows from Lemma~\ref{lem:arith_individual_finite-state} and Lemma~\ref{lem:arith_coarsely_diagonal} that the action is coarsely diagonal and that it is finite-state if $k$ is a rational function field. We need to check self-similarity. It suffices to show that states of elements of $G_a(\calO_S) \cup G_m(\calO_S)$ lie in $\AGL_1(\calO_S)$.

The states of $\beta \in G_a(\calO_S)$ are of the form $\pi^{-e}[\beta]_e$ by Lemma~\ref{lem:arith_states}, which lie in $G_a(\calO_S)$ by Lemma~\ref{lem:pole}.

For $\alpha \in G_m(\calO_S)$, by Lemma~\ref{lem:arith_states}, we have to verify that $\delta = \pi^{-e}(\alpha[\gamma]^e - [\alpha\gamma]^e) \in \calO_S$ for any $\gamma \in \calO$. But $[\gamma]^e$ and $[\alpha\gamma]^e$ are polynomials in $\pi \in \calO_S$, so indeed $\delta \in \calO_S$.
\end{proof}

\begin{remark}
Our groups $\AGL_1(\calO_S)$ are essentially the groups $\PU(2,\calO_S)$ considered by Kochloukova and Sidki in \cite[Theorem~A]{kochloukova17}, and the action is the one obtained by taking $s = [\nu_{t-1}]$. One difference is that they require $[\nu_\infty], [\nu_t] \in S$ while we do not. In particular, all their groups in \cite[Theorem~A]{kochloukova17} are finitely generated. Similarly, if we take $S = \{[\nu_\infty]\}$ then our group $\AGL_1(\calO_S)$ is exactly the group $\FF_p[x] \rtimes B(1,\FF_p[x])$ considered in \cite[Theorem~B]{kochloukova17}.
\end{remark}

We conclude:

\begin{theorem}\label{thm:self_similar_fin_props}
Let $n$ be a positive integer and let $d$ be a prime power. There is a self-similar, finite-state, coarsely-diagonal subgroup of $\Aut(\tree_d)$ that is of type $\F_{n-1}$ but not of type $\FP_n$. If $n = 1$ and $d \ge 3$ then it has finite abelianization.
\end{theorem}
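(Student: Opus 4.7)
The plan is to assemble the machinery developed throughout this section into a single group and verify the properties in order. Since $d$ is a prime power, write $d = q$ and take the rational function field $k = \FF_q(t)$. Let $s = [\nu_t]$, a place of degree one, so that the residue field at $s$ is $\FF_q$ and the tree $\tree_\calO$ has branching degree $d$; I identify $\tree_\calO$ with $\tree_d$. I then choose $S$ to be any set of exactly $n$ places of $k$ with $s \notin S$ and $[\nu_\infty] \in S$; such a choice exists because $k$ has infinitely many places, and the inclusion $[\nu_\infty] \in S$ guarantees that the uniformizer $\pi = t$ lies in $\calO_S$, matching the setup in front of Theorem~\ref{thm:metabelian_fin_props}.

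Next I apply Proposition~\ref{prop:arith_self_similar}, which says that the action of $\AGL_1(\calO_S)$ on $\tree_\calO \cong \tree_d$ is self-similar, coarsely diagonal, and (since $k$ is rational) finite-state. The action is faithful by a direct computation: any $(\alpha,\beta) \in \AGL_1(\calO_S)$ fixing every $0 \bmod \frakm^e$ must satisfy $\beta = 0$, and then fixing every $1 \bmod \frakm^e$ forces $\alpha = 1$. So the image $G \le \Aut(\tree_d)$ is isomorphic to $\AGL_1(\calO_S)$ and inherits the three geometric properties. Finally, Theorem~\ref{thm:metabelian_fin_props} gives that $\AGL_1(\calO_S)$, and hence $G$, is of type $\F_{\abs{S}-1} = \F_{n-1}$ but not of type $\FP_{\abs{S}} = \FP_n$.

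For the last assertion, suppose $n = 1$ and $d \ge 3$, and specialize to $S = \{[\nu_\infty]\}$, so that $\calO_S = \FF_q[t]$. By Dirichlet's $S$-unit theorem for function fields, $\calO_S^\times = \FF_q^\times$, hence $G \cong \FF_q^\times \ltimes \FF_q[t]$ as a semidirect product. Since $d = q \ge 3$, there exists $\alpha \in \FF_q^\times$ with $\alpha \ne 1$, and then $\alpha - 1 \in \FF_q^\times$ is a unit of $\calO_S$. A direct calculation in the semidirect product gives $[(\alpha,0),(1,\beta)] = (1,(\alpha-1)\beta)$, which, as $\beta$ ranges over $\FF_q[t]$, exhausts the normal subgroup $\{1\} \ltimes \FF_q[t]$. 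The abelianization of $G$ is therefore a quotient of the finite group $\FF_q^\times$ and so is finite.

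There is no single deep step: all of the technical work has already been carried out in Proposition~\ref{prop:arith_self_similar} and Theorem~\ref{thm:metabelian_fin_props}. The only small delicacies are choosing $s$ of degree one so the branching degree matches $d$, arranging $[\nu_\infty] \in S$ so that a uniformizer lives in $\calO_S$, and verifying the faithfulness and the final commutator computation — all of which are routine.
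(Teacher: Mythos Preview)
Your proof is correct and follows essentially the same route as the paper: choose $k=\FF_d(t)$, a degree-one place $s$, and a set $S$ of $n$ places avoiding $s$, then invoke Proposition~\ref{prop:arith_self_similar} and Theorem~\ref{thm:metabelian_fin_props}; for $n=1$ use that $\calO_S^\times=\FF_d^\times$ and that the additive part lies in the commutator subgroup when $d\ge 3$. Your extra specificity (taking $s=[\nu_t]$, forcing $[\nu_\infty]\in S$ to pin down $\pi=t$, and spelling out faithfulness and the commutator identity) is harmless and just makes explicit what the paper leaves implicit via Riemann--Roch and the ambient Bruhat--Tits action.
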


\begin{proof}
Take $k = \FF_d(t)$ and let $s$ be a place of degree $1$. Let $S$ be a set of $n$ places not containing $s$. Then $K \defeq k_s$ is isomorphic as a valued field to $\FF_d\lseries{\pi}$ and in particular $\tree_\calO\cong \tree_d$. The first sequence of claims now follows for $\AGL_1(\calO_S)$ from Proposition~\ref{prop:arith_self_similar} and Theorem~\ref{thm:metabelian_fin_props}.

If $n = 1$ then $\calO_S \cong \FF_{d}[\pi]$, so $G_m(\calO_S)$ is cyclic of finite order. Also, if $d \ge 3$ then $G_m(\calO_S)$ is the abelianization of $\AGL_1(\calO_S)$. Hence the last claim holds.
\end{proof}

\begin{remark}
If $k = \FF_2(t)$ and $S$ contains a single place of degree $1$ then $G_m(\calO_S) = \FF_2^\times$ is trivial and $\AGL_1(\calO_S) = G_a(\calO_S)$ is infinite abelian.
\end{remark}

\section{Proof of the Main Theorem and examples}
\label{sec:proof}

To prove the Main Theorem now we only need to assemble pieces. The precise formulation is as follows.

\begin{theorem}\label{thm:main_thm_precise}
Let $n$ be a positive integer and let $d \ge 3$. If $n = 1$, let $d \ge 4$. There exists a self-similar group $G \le \Aut(\tree_d)$ such that the R\"over--Nekrachevych group $V_d(G)$ is virtually simple and of type $\F_{n-1}$ but not of type $\FP_n$. More specifically, the commutator subgroup $V_d(G)'$ is simple, has finite index in $V_d(G)$, and is of type $\F_{n-1}$ but not of type $\FP_n$.
\end{theorem}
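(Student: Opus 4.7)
The plan is to assemble the machinery of the preceding sections. Theorem~\ref{thm:self_similar_fin_props} supplies self-similar groups with the desired finiteness properties on $\tree_p$ for any prime power $p$; Lemma~\ref{lem:persist} upgrades a self-similar action on $\tree_{d'-1}$ to a persistent self-similar action on $\tree_{d'}$ while preserving being finite-state and coarsely diagonal; Theorems~\ref{thm:simple} and~\ref{thm:fin_abln} (or Observation~\ref{obs:fin_abln_easy}) deliver virtual simplicity of $V_d(G)$; Theorem~\ref{thm:positive} transports type $\F_{n-1}$ from $G$ to $V_d(G)$; and Theorem~\ref{thm:negative}, via the quasi-retract construction of Proposition~\ref{prop:quasi_retract}, transports failure of type $\FP_n$ from $G$ to $V_d(G)$.

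First I would choose a prime power $p$ with $2 \le p \le d-1$, additionally requiring $p \ge 3$ when $n=1$, which is possible precisely because of the reinforced hypothesis $d \ge 4$ in that case. Theorem~\ref{thm:self_similar_fin_props} then yields a self-similar, finite-state, coarsely-diagonal subgroup $G \le \Aut(\tree_p)$ of type $\F_{n-1}$ but not of type $\FP_n$, and having finite abelianization when $n=1$. Applying Lemma~\ref{lem:persist} a total of $d-p$ times transfers the same abstract group $G$ to a faithful action on $\tree_d$ that is self-similar, persistent, finite-state, and coarsely diagonal; each application preserves these structural properties, and the underlying abstract group is unchanged, so it retains its finiteness properties and, when $n=1$, its finite abelianization.

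It then remains to read off the four claims about $V_d(G)$. Simplicity of the commutator subgroup $V_d(G)'$ is immediate from Theorem~\ref{thm:simple}. Finiteness of the abelianization of $V_d(G)$, and hence of the index $[V_d(G):V_d(G)']$, follows from Theorem~\ref{thm:fin_abln} when $n \ge 2$ (using finite generation together with coarse diagonality of $G$) and from Observation~\ref{obs:fin_abln_easy} when $n=1$ (using the finite abelianization of $G$ itself). Type $\F_{n-1}$ of $V_d(G)$ is Theorem~\ref{thm:positive}, while failure of type $\FP_n$ is Theorem~\ref{thm:negative}, both applicable because $G$ has the required properties by construction. Since type $\F_{n-1}$ and type $\FP_n$ are invariant under passage to a subgroup of finite index, $V_d(G)'$ inherits being of type $\F_{n-1}$ but not of type $\FP_n$.

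The only delicate point, and the reason for the reinforced hypothesis $d \ge 4$ in the case $n=1$, is that when $G$ itself is not finitely generated one cannot apply Theorem~\ref{thm:fin_abln} and must instead arrange for $G$ to have finite abelianization so as to invoke Observation~\ref{obs:fin_abln_easy}; this in turn forces the starting prime power $p$ to be at least $3$, and hence $d$ to be at least $4$. Every other step is a direct invocation of an already-proved result from the preceding sections.
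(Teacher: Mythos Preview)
Your proposal is correct and follows essentially the same argument as the paper's own proof: choose a prime power $d'<d$ (with $d'\ge 3$ if $n=1$), invoke Theorem~\ref{thm:self_similar_fin_props} to get $G$, apply Lemma~\ref{lem:persist} repeatedly to land in $\Aut(\tree_d)$ with a persistent action, and then read off simplicity, finite index, type $\F_{n-1}$, and non-$\FP_n$ from Theorems~\ref{thm:simple}, \ref{thm:fin_abln}/Observation~\ref{obs:fin_abln_easy}, \ref{thm:positive}, and \ref{thm:negative} respectively. Your version is slightly more explicit than the paper in noting that the finiteness properties pass between $V_d(G)$ and its finite-index subgroup $V_d(G)'$, but otherwise the two proofs are the same.
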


\begin{proof}
Let $d' < d$ be a prime power and if $n = 1$ then take $d'\ge 3$. By Theorem~\ref{thm:self_similar_fin_props} there exists a group $G$ with a faithful, self-similar action on $\tree_{d'}$, such that $G$ is of type $\F_{n-1}$ but not of type $\FP_n$, and the action is finite-state and coarsely diagonal. Applying Lemma~\ref{lem:persist} $d-d'$ times, we get a faithful action of $G$ on $\tree_d$ that in addition to being self-similar, finite-state, and coarsely diagonal is also persistent. We claim that the R\"over--Nekrachevych group $V_d(G)$ is as desired. First, the commutator subgroup $V_d(G)'$ is simple by Theorem~\ref{thm:simple}. It is of type $\F_{n-1}$ by Theorem~\ref{thm:positive} and not of type $\FP_n$ by Theorem~\ref{thm:negative}. Finally, to show that $V_d(G)'$ is of finite index we apply Theorem~\ref{thm:fin_abln} if $n \ge 2$, and Observation~\ref{obs:fin_abln_easy} if $n = 1$.
\end{proof}

We now discuss some concrete examples.

\begin{example}[Grigorchuk group]\label{ex:grig}
Recall the standard generating set for the Grigorchuk group, $\Grig$, is given by $a=(1~2)(\id, \id)$, $b=(a,c)$, $c=(a,d)$, and $d=(\id, b)$. The action on $\tree_2$ is self-similar and finite-state, and is coarsely diagonal since $\Grig$ is torsion. These generators can be extended to create a persistent action on $\tree_3$ via $a=(1~2)(\id, \id, a)$, $b=(a,c,b)$, $c=(a,d,c)$, and $d=(\id,b,d)$. It was shown in \cite{grigorchuk99} that $\Grig$ is not finitely presented. Therefore, although $V_2(\Grig)$ is of type $\F_\infty$ \cite{belk16}, our results show that the virtually simple group $V_3(\Grig)$ is of type $\F_1$ but not of type $\F_2$.
\end{example}

Since the Main Theorem provides the first known examples of finitely presented simple groups that are not of type $\FP_3$, we describe such an example explicitly.

\begin{example}[$\F_2$-not-$\FP_3$]\label{ex:F2_not_FP3}
 Consider the rational function field $\FF_2(t)$. Let
\[
S=\{[\nu_\infty],[\nu_t],[\nu_{1+t+t^2}]\}
\]
and consider the ring of $S$-integers $\calO_S = \FF_2[t,t^{-1},(1+t+t^2)^{-1}]$. Since $|S|=3$, the group $\AGL_1(\calO_S)$ is finitely presented but not of type $\FP_3$. Viewing $\AGL_1(\calO_S)$ as a matrix group, it is
\[
\left\{ \left.\begin{pmatrix} \alpha & \beta\\  & 1 \end{pmatrix} \right| \alpha \in \calO_S^\times, \beta \in \calO_S\right\}\text{.}
\]
A convenient finite generating set is
\[
a = \begin{pmatrix} 1 & 1\\  & 1 \end{pmatrix} \text{, } b = \begin{pmatrix} t & 0\\  & 1 \end{pmatrix} \text{, } c = \begin{pmatrix} 1+t+t^2 & 0\\  & 1 \end{pmatrix}\text{.}
\]
We now consider $\calO_S$ as living inside the completion of $\FF_2(t)$ at the place $[\nu_{1+t}]$, and as in Section~\ref{sec:self_similar_fin_props} we get a self-similar action of $\AGL_1(\calO_S)$ on $\tree_2$. Inspecting the level-$1$ states, as described in Lemma~\ref{lem:arith_states}, and the action on the first level, we see that we have the following wreath recursions:
\[
a = (1~2)(\id,\id) \text{, } b = (b,ab) \text{, } c = (c,bab^{-1}c) \text{.}
\]
It can be checked that the states of $a$ are $\{\id,a\}$, the states of $b$ are $\{b,ab\}$ and the states of $c$ are $\{c,bab^{-1}c,ac,a^{-1}bab^{-1}c\}$, which confirms that the action is finite state. It is also not hard to confirm that it is coarsely diagonal, since the normal closure of $a$ has exponent $2$. Now we extend this to a self-similar action on $\tree_3$ (abusively using the same letters to denote the new elements) via:
\[
a = (1~2)(\id,\id,a) \text{, } b = (b,ab,b) \text{, } c = (c,bab^{-1}c,c) \text{.}
\]
This action of $\AGL_1(\calO_S)$ satisfies all the required conditions, and so the commutator subgroup $V_3(\AGL_1(\calO_S))'$ of the R\"over--Nekrashevych group $V_3(\AGL_1(\calO_S))$ is a finitely presented simple group that is not of type $\FP_3$.

From the recipe in \cite[Theorem~9.14]{nekrashevych04}, one can compute that the abelianization of $V_3(\AGL_1(\calO_S))$ is $\Z/4\Z \oplus \Z/2\Z \oplus \Z/2\Z$ and the abelianization map is described as follows. First let $\chi\colon \AGL_1(\calO_S) \to \Z/4\Z$ be the map $a\mapsto 2+4\Z$, $b\mapsto 1+4\Z$, $c\mapsto 1+4\Z$, let $\chi_b\colon \AGL_1(\calO_S) \to \Z/2\Z$ be the map $a\mapsto 0+2\Z$, $b\mapsto 1+2\Z$, $c\mapsto 0+2\Z$, and let $\chi_c\colon \AGL_1(\calO_S) \to \Z/2\Z$ be the map $a\mapsto 0+2\Z$, $b\mapsto 0+2\Z$, $c\mapsto 1+2\Z$. Then the abelianization map of $V_3(\AGL_1(\calO_S))$ is
\begin{align*}
V_3(\AGL_1(\calO_S)) &\to \Z/4\Z \oplus \Z/2\Z \oplus \Z/2\Z \\
[T_-,\sigma(g_1,\dots,g_n),T_+] &\mapsto (\chi(g_1\cdots g_n),\chi_b(g_1\cdots g_n),\chi_c(g_1\cdots g_n))\text{,}
\end{align*}
and we get a concrete description of $V_3(\AGL_1(\calO_S))'$ as the kernel of this map.
\end{example}

We conclude by distinguishing our examples from Kac--Moody groups. Let $\bfG$ be a Kac--Moody functor and let $\FF_q$ be a finite field. There is an associated twin building $(X_+,X_-)$ and $\bfG(\FF_q)$ acts on $X_+ \times X_-$ as a lattice. Identifying the group with a point orbit is a quasi-isometric embedding by \cite[Theorem~1.1]{caprace09}, so the asymptotic dimension of $\bfG(\FF_q)$ is bounded by the asymptotic dimension of $X_+ \times X_-$ \cite[Proposition~23]{bell08}, which is finite \cite{dymara09}. On the other hand, Lemma~\ref{lem:infinite_rank_abelian_subgroup} implies that all of our examples have infinite asymptotic dimension, cf.\ \cite[Theorems~64,~74]{bell08},
and so none of them is quasi-isometric to any $\bfG(\FF_q)$.

\bibliographystyle{alpha}
\newcommand{\etalchar}[1]{$^{#1}$}

\end{document}